\def\th@plain{%
  \thm@notefont{}
  \itshape 
}
\def\th@definition{%
  \thm@notefont{}
  \normalfont 
}
\newtheorem{theorem}{Theorem}
\newtheorem{lemma}[theorem]{Lemma}
\newtheorem{example}{Example}
\newtheorem{assumption}{Assumption}
\newtheorem{definition}{Definition}
\theoremstyle{definition}
\newtheorem{test}{Test}
\newtheorem{algorithm}{Algorithm}
\newenvironment{remark}
  {\RemarkWithQed}
  {\qed\endRemarkWithQed}
\definecolor{darkblue}{rgb}{0.0,0.1,0.3}
\definecolor{darkgreen}{rgb}{0.0,0.35,0.15}
\definecolor{darkred}{rgb}{0.3,0.1,0.0}
\def\refColor{darkgreen}
\newcommand{\myparagraph}{\@startsection
  {paragraph}
  {4}
  {0mm}
  {-0.5\baselineskip}
  {0.25\baselineskip}
  {\itshape}}
\newcounter{step}[theorem]
\newcommand{\LeftEqNo}{\let\veqno\@@leqno}
\newcommand{\CiteSecondPaper}{\cite{Guillen-RGalvan:13b}\xspace}
\newcommand{\imgdir}{img}
\begin{document}
\title{On the stability of approximations for the Stokes problem using
  different finite element spaces for each component of the velocity%
  %
  \thanks{This work has been partially supported by project
    MTM2012-32325 (MINECO, Spain). Also the second author has been partially supported by the research group FQM-315 (Junta de Andaluc{\'i}a).}}  

\author{F. Guill\'en Gonz\'alez%
  \thanks{Departamento de Ecuaciones Diferenciales y An\'alisis
    Num\'erico and IMUS. Universidad de Sevilla. Aptdo. 1160, 41080 Sevilla
    (Spain).  {\tt guillen@us.es} }~
and J.R. Rodr\'iguez Galv\'an%
  \thanks{Departamento de Matem\'aticas. Universidad de
    C\'adiz. C.A.S.E.M. Pol{\'i}gono R{\'i}o San Pedro S/N, $11510$
    Puerto Real, C\'adiz (Spain).  {\tt rafael.rodriguez@uca.es} } }

\maketitle
 
\begin{abstract}
  This paper studies the stability of velocity-pressure mixed approximations of the
  Stokes problem when different finite element (FE) spaces
  for each component of the velocity field are considered. We consider
   some new combinations of continuous FE  reducing
    the number of degrees of freedom in some velocity
    components.  Although the resulting FE combinations are not stable in general,
     by using the Stenberg's macro-element technique, we show their stability in a wide family of meshes (namely, in uniformly unstructured meshes). Moreover,  a  post-processing is given in order to convert any mesh family  in  an uniformly unstructured mesh family. Finally, some $2D$ and
    $3D$ numerical simulations  are provided agree with the previous analysis.

  \par\bigskip\noindent \textbf{Keywords.} 
  Inf-sup condition, incompressible fluids, mixed variational
  formulations, finite elements, macro-element technique.
  \par\smallskip
  \noindent \textbf{AMS classification (2010).} 35Q30, 65N12, 65N30, 76D07.
\end{abstract}

\tableofcontents

\section{Introduction}
\label{Introduction}

The main purpose of this work is to analyze the stability of
velocity-pressure mixed FE approximations of the
Stokes problem when the components of velocity are
approximated in different spaces.

As described below, this is an issue of great interest for the
approximation of some variants of the Navier-Stokes equations (in
particular for the approximation of the hydrostatic Navier-Stokes
equations governing the large scale ocean~\CiteSecondPaper) and, as far as we know, it
  has not been sufficiently studied in the literature. In fact,
  although stability of velocity-pressure mixed  FE approximations
  of the Stokes problem is a matter widely investigated in the last decades, almost all of these works fix  the same FE space for all
  components of the velocity field, being the pressure the only
  unknown that could be approximated by a different space.

Here we consider the classical Stokes problem in a 
domain $\domain\subset\Rset^d$ ($d=2$ or $3$),
\begin{align}
  - \Delta \ww + \grad\pp &= \ff \quad \In\domain,
  \label{eq:Stokes.a}
  \\
  \div \ww &= 0 \quad \In\domain,
  \label{eq:Stokes.b}
\end{align}
distinguishing the components of the velocity field as follows:
$\ww=(\uu,\vv)$, where $\uu:\domain \to \Rset^{d-1}$ and
$\vv:\domain\to\Rset$ (horizontal and vertical components,
respectively). Also $\pp:\domain\to\Rset$ is the pressure,
$\ff:\domain\to\Rset^d$ is the vector of external forces and, as
usual, $\Delta$ and $\div$ represent the Laplace and divergence
operators. For sake of simplicity, we take constant viscosity (equal
to one) and consider homogeneous Dirichlet boundary condition,
\begin{equation}
  \label{eq:Stokes.c}
  \ww = \textbf{0} \quad \On \partial\domain.
\end{equation}

Let us introduce usual notations: $L^2(\domain)$ represents the space of the
square integrable functions in $\domain$ and $(f,g)=\int_\domain fg$ denotes its scalar product, $H^m(\domain)$ is the space
of the functions whose distributional derivatives of order up to
$m\in\Nset$ belong to $L^2(\domain)$ and $H_0^1(\domain)$ is the
subspace of $H^1(\domain)$ whose functions vanish on
$\partial\domain$.

If one consider the bilinear forms $a(\ww,\bw)=(\grad\ww,\grad\bw)$
and $b(\bp,\ww)=(\bp,\div\ww)$, the variational formulation of the Stokes
problem~(\ref{eq:Stokes.a})--(\ref{eq:Stokes.c}) reads: find
$(\ww,\pp)\in \WW\times\PP$ such that:
\begin{align*}
  a(\ww,\bw) - b(\pp,\bw) &= \int_\Omega\ff\cdot\bw , \quad  \forall\,\bw\in\WW,
  \\
  b(\bp,\ww) &=0 ,\quad  \forall\,\bp\in\PP,
\end{align*}
where $\WW=H_0^1(\domain)^d$ and $\PP=L^2_0(\domain)=\big\{\pp \in L^2(\domain) /
\int_\domain \pp = 0 \big\}$. We assume
$\ff$ in $L^2(\domain)^d$.

It is well known (see for instance~\cite{boffi-brezzi-fortin_mixed_2013}) that
well-posedness of previous problem yields from the coercivity of
the bilinear form $a(\cdot,\cdot)$ and the inf-sup condition of
$b(\cdot,\cdot)$, which holds for the continuous problem on
$\WW\times\PP$. But if we consider the discrete problem, find
$(\wh,\ph)\in\Wh\times\Ph$ such that
\begin{align}
  \label{eq:discreteStokes.a}
  a(\wh,\bwh) - b(\ph,\bwh) &= \int_\Omega \ff\cdot\bwh, \quad \forall\,\bwh\in\Wh,
  \\
  \label{eq:discreteStokes.b}
  b(\bph,\wh) &=0 ,\quad \forall\,\bph\in\Ph,
\end{align}
where $\Wh\subset\WW$ and $\Ph\subset\WW$ are families of FE spaces,
although coercivity of $a(\cdot,\cdot)$ in $\Wh\times\Ph $ is automatically inherited,
the continuous inf-sup condition is not enough for well posedness
of~(\ref{eq:discreteStokes.a})--(\ref{eq:discreteStokes.b}).  In fact,
each choice of $\Wh$ and $\Ph$ requires checking the following
discrete counterpart of the inf-sup condition (also called LBB
condition): there exists $\beta>0$ independent on $h$ such that
\begin{equation}
  \label{eq:stokes-inf-sup}
  \beta\, \|\bph\|_{L^2} \le \sup_{0\neq\bwh\in\Wh}
  \frac{\int_\Omega (\div\bwh)\ph}{\|\grad\bwh\|_{L^2}} , \quad
   \forall\bph\in\Ph .
\end{equation}
In what follows, we will take $\Wh=\Uh\times\Vh$, where $\Uh$ and
$\Vh$ are, respectively, FE spaces approximating the (two
components of) horizontal velocity and the vertical velocity. If
$\domain\subset\Rset^2$, $\Uh$ is scalar and is denoted by $U_h$.
If no ambiguity, we denote by $\P1$ or $\P2$ the corresponding
continuous piecewise polynomial FE approximation, while $\P0$ denotes
discontinuous FE.  

The case where $\Uh$ and $\Vh$ are approximated by
the same FE space has been widely studied and several combinations
(which we denote by $\FEthreeSpaces{U_h}{\Vh}{\Ph}$, in the 2D
case) are known satisfying the discrete inf-sup
condition~(\ref{eq:stokes-inf-sup}). That is the case, for example, of
\FEthreeSpacesP{1,b}{1,b}{1} (usually known as \P1$\!\!+$bubble or
mini-element), \FEthreeSpacesP{2}{2}{1} and \FEthreeSpacesQ{2}{2}{1}
(Taylor-Hood elements), while some others do not
satisfy~(\ref{eq:stokes-inf-sup}) in general (see for instance
Section~\ref{sec:some-appr-with-P0-pressure} and references
therein). As usual, $\P{1,b}$ denotes $\P1$ continuous functions
enriched by a ``bubble'' function for each element and $\Q{k}$ are
continuous tensor $\P{k}\times\P{k}$ elements on quadrangular meshes.

This work is centered in the case where $\Uh$ and $\Vh$ are
approximated by different FE spaces, for which (as far as we know) only
the case \FEthreeSpacesP{2}{1}{0} has been addressed,
see~\cite{Stenberg:90}. Some new combinations
$\FEthreeSpaces\Uh\Vh\Ph$ are going to be studied, showing (or
refusing) its stability in wide mesh families and extending
the analysis to three-dimensional domains.

Specifically, our main contribution is showing that although the
  inf-sup condition~(\ref{eq:stokes-inf-sup}) does not hold in general
  for the combinations \FEthreeSpaces{\P{1,b}}{\P1}{\P1} and
  \FEthreeSpaces{\P2}{\P1}{\P1} (and also for the symmetric cases
  \FEthreeSpaces{\P1}{\P{1,b}}{\P1} and
  \FEthreeSpaces{\P1}{\P2}{\P1}), it is satisfied in  numerous mesh
  families, namely in \emph{uniformly unstructured}
  meshes (see Definition~\ref{def:uniform-yUnstruct-mesh}). In fact,
  one can easily detect if a given mesh is not unstructured and, with a
  slight post-processing, transform it into a uniformly unstructured one
  (see~Algorithm~\ref{alg:x-unstruct} below).


There are some other well-known unstable elements (for
  instance \FEthreeSpacesP{1}{1}{0} and \FEthreeSpacesP{1,b}{1}{0},
  see Section~\ref{sec:some-appr-with-P0-pressure}) which have less
  degrees of freedom than the ``\textit{mini-element}''
  $\FEthreeSpaces{\P{1,b}}{\P{1,b}}{\P1}$ and that are stable just in
  some concrete meshes. These meshes are, in particular, uniformly
  unstructured and indeed FE combinations provided in this paper are stable in
  a very much larger family of meshes. In this sense,
  $\FEthreeSpaces{\P{1,b}}{\P1}{\P1}$ constitutes a
  ``\textit{minimal-element}'' because  with less degrees
  of freedom than the ``\textit{mini-element}'' stability is provided for unstructured meshes (or  slightly post-processed structured ones).


Previous results are extended to $3D$ domains by using  $\FEfourSpacesP{1,b}{1,b}{1}1$
  and even $\FEfourSpacesP{1,b}111$ (and the corresponding  spaces replacing $\P{1,b}$ by $\P2$). Additionally, we present some numerical tests showing that
$\FEthreeSpacesP{1,b}11$ preserves  the accuracy 
$O(h)$ while \FEthreeSpacesP211 presents a loss of order (only order
$O(h)$) with respect to the order $O(h^2)$ of the classical
Taylor-Hood approximation \FEthreeSpacesP221.

Our interest on the stability of
  anisotropic horizontal/vertical velocity approximation  has been motivated by the study of some  geophysical fluid models governing the motion of large scale oceans (see for instance
\cite{CushmanRoisin-Beckers:09} and references therein cited).  In
fact, they lead to the Navier-Stokes equations with anisotropic eddy
viscosities in shallow domains or, after rescaling, to equivalent
models in adimensional
domains~\cite{Azerad-Guillen:01,Besson-Laydi:92}. In the linear steady
case, the following Anisotropic
Stokes problem arises:
\begin{equation}  \label{eq:AniStokes}
  \left\{
  \begin{aligned}
    -\Delta\uu + \gradx\pp &= \ff \quad \In \domain,
    \\
    -\eps^2\Delta\vv + \dz\pp &= g \quad \In \domain, 
    \\
    \div(\uu,\vv) &= 0 \quad \In\domain,
  \end{aligned}
  \right.
\tag{AS}
\end{equation}
where $\eps>0$ is the so-called aspect ratio between vertical and
horizontal scales.  When $\eps\to 0$, diffusion for the vertical
velocity vanishes \cite{Besson-Laydi:92} and a singular limit
problem appears, which is called the Hydrostatic Stokes equations.
 This type of problems also appear in the transient nonlinear case
  (called Hydrostatic Navier-Stokes equations or Primitive Equations
  of the ocean), see \cite{Azerad-Guillen:01}. 

  In a forthcoming work~\CiteSecondPaper (see also~\cite{Azerad:1994,Azerad:PhD:96})  the well-posedness of the approximations in uniformly unstructured   meshes of~(\ref{eq:AniStokes}) is developed (valid when $\eps$ is
  ``small'' or zero).  This theory is based on
  the Stokes inf-sup condition jointly to an additional inf-sup
  condition needed to stabilize the
  vertical velocity. 
  This new inf-sup condition is satisfied at discrete level when the pressure space
  is large enough with respect to the vertical velocity one. But, for standard  Stokes FE spaces, the
 when the number of degrees of freedom for pressure is too large then Stokes
  stability is not satisfied. The question is, are there intermediate possibilities for approximating velocity-pressure mixed formulation satisfying both stability constraints ? The answer given in this paper is yes, but for uniformly unstructured meshes. 
 
This paper is organized as follows. In
Section~\ref{sec:some-appr-with-P0-pressure}, we recall some results
about approximations with discontinuous pressure. In
Section~\ref{sec:P1bP1P1} we prove that \FEthreeSpacesP{1,b}11 is
stable in uniformly unstructured meshes and show its instability in
some structured meshes. Results are generalized
to the $3D$ case and numerical tests are shown confirming our
analysis. In Section~\ref{sec:P2P1P1}, we show that \FEthreeSpacesP211
is also stable in most unstructured meshes, although the
results are slightly more restrictive than for \FEthreeSpacesP{1,b}11
approximation. This theory is supported by some numerical experiments,
including some $3D$ tests. Finally, in Section~\ref{sec:Q2Q1Q1}, we show instability of the
combination \FEthreeSpaces{\Q2}{\Q1}{\Q1} on quadrangular elements,
both analytical and computationally.

\section{Definitions and notations. \P0 approximation for the pressure}
\label{sec:some-appr-with-P0-pressure}

\renewcommand{\UU}{U}%
\renewcommand{\Uh}{U_h}%
\renewcommand{\uu}{u}%
\renewcommand{\uh}{\uu_{h}}%
\renewcommand{\divx}{\partial_x}%
\newcommand{\divuv}{\partial_x\uu + \partial_y\vv}
\renewcommand{\gradx}{\partial_x}%

Let us assume elsewhere that $\domain\subset\Rset^d$, with $d=2$ or
$d=3$, is a domain with polygonal boundary and let $\Th\subset\Rset^d$
be a shape regular family of triangulations of
$\overline\domain$. Furthermore, let us suppose that $\Th$ is a
simplicial mesh (i.e.~composed of $2D$ triangles or $3D$
tetrahedrons), unless otherwise stated.

In this section we recall some results about discontinuous
approximation of pressure. Specifically, the case $d=2$ is considered
and, for the approximation of the discrete Stokes
problem~(\ref{eq:discreteStokes.a})--(\ref{eq:discreteStokes.b}), the
following \P0 discontinuous space for the approximation of the
pressure is introduced:
\begin{equation}
  \label{eq:P0PhSpace}
  \Ph = \{ \ph\in\Pspace \st \pp|_T\in \P0  \ \forall T\in\Th \}.
\end{equation}
About velocity, we define $\Wh=\Uh\times\Vh$, where $\Uh$ and $\Vh$
are continuous FE spaces for horizontal and vertical velocity,
respectively. Indeed, we deal $\P1$, $\P{1,b}$ or $\P2$ spaces for
horizontal velocity and $\P1$  for vertical
  one. Of course the symmetric choice is equivalent.

As result, we consider the FE combinations denoted by  \FEthreeSpacesP{1}{1}{0},
\FEthreeSpacesP{1b}{1}{0} and \FEthreeSpacesP{2}{1}{0} whose stability
has been sufficiently analyzed in literature  and we can summarize as follows:
\FEthreeSpacesP{2}{1}{0} is the minimal FE combination (with $\P0$
discontinuous pressure) which is stable in general meshes.
Namely, let us consider
\begin{equation}
  \label{eq:P1VhSpace}
  \Vh  = \{ \vh\in H_0^1(\domain) \cap C^0(\overline\domain) \st
  \vv|_T\in \P1, \ \forall T\in\Th \}
\end{equation}

\begin{itemize}
\item {Instability of \FEthreeSpacesP{1}{1}{0} approximation},
  i.e.~the case where $\Uh=\Vh$, is well known to occur  excepting some specific meshes (called \textit{criscross} meshes,
  see~\cite{QinZhang:07}). They are particular cases of \xUnstructured
  and \yUnstructured meshes, see
  Definition~\ref{def:uniform-y-unstructured-parition}.
\item {Instability of \FEthreeSpacesP{1,b}{1}{0}}, i.e.~the case where
  \begin{equation}
    \label{eq:P1bUhSpace}
    \Uh = \{\uh \in H_0^1(\domain)\cap C^0(\overline\domain) \st \uh|_T \in \P{1,b}(T),
    \ \forall T\in\Th
    \},
  \end{equation}
  can be easily reduced to the above
    case, using the fact that integrals in triangles of 
  derivatives of bubble functions vanish. The case
  \FEthreeSpacesP{1,b}{1,b}{0} is similar.
\item Stability of \FEthreeSpacesP{2}{1}{0}, i.e.~the case where 
  \begin{align}
  \label{eq:P2UhSpace}
  \Uh &= \{\uh \in H_0^1(\domain)\cap C^0(\overline\domain) \st \uh|_T \in \P2,
  \ \forall T\in\Th
  \}
\end{align}
while $\Vh$ and $\Ph$ are defined respectively in~(\ref{eq:P1VhSpace})
and (\ref{eq:P0PhSpace}), was shown by R. Stenberg
in~\cite{Stenberg:90}, using the  macro-element technique  (which was
developed by himself in that reference and~\cite{Stenberg:84}).
\end{itemize}

\section{Stability of the \FEthreeSpacesP{1,b}{1}{1} approximation}
\label{sec:P1bP1P1}

In what follows, continuous piecewise linear pressures are selected:
\begin{equation}
  \label{eq:P1PhSpace}
  \Ph = \{ \ph\in\Pspace\cap C^0(\overline\domain) \st \pp|_T\in \P1,  \ \forall T\in\Th \}.
\end{equation}
Using the macro-element technique, we study the 2D case, where
$\Uh$ is a continuous \P{1,b} FE space, as defined
in~(\ref{eq:P1bUhSpace}) and $\Vh$ is a continuous \P1 space, as
in~(\ref{eq:P1VhSpace}). Results are generalized to
the $3D$ case (Section~\ref{sec:P1bP1P1-3D}) and some
numerical experiments are shown, supporting these results
(Section~\ref{sec:numer-simulations}).

\subsection{The 2D case}
\label{sec:P1bP1-P1-2d}

For the study of the \FEthreeSpacesP{1,b}{1}{1} approximation, the
macro-element technique is applied, following the seminal papers
of Stenberg~\cite{Stenberg:84,Stenberg:90} (specially the
former). We show a local stability result
(Theorem~\ref{theorem:P1bP1P1:macroelem}) and global stability result
(Theorem~\ref{theorem:P1bP1P1-mesh-stability}), where a generalization
of Stenberg's theory must be introduced.

A macro-element partitioning, $\Mh$, of a triangulation $\Th$ in $\domain$
is a family of connected sets $M\in\Mh$ which are union of at least
two elements of $\Th$. For each $M\in \Mh$, let $\UM$ and $\VM$ be the
spaces of functions in $H_0^1(M)\cap C^0(\overline M)$ which are,
respectively, in $\P{1,b}$ and $\P1$ for all $T\subset M$. Let $P_M$ be
the space of functions in $C^0(\overline M)$ which are in
$\P1$ for every $T\subset M$.

\begin{definition}
  We say that 
  $\FEthreeSpaces{\Uh}{\Vh}{\Ph}$ is regular in a macro-element $M$  if
  \begin{equation}
    \begin{split}
      \label{eq:macroelement.condition}
      &\text{The set } N_M=\left\{ \ph\in P_M,\ \int_M
        (\dx\uh+\dy\vh) \ph=0 \ \forall (\uh,\vh)\in \UM\times \VM
      \right\}
      \\
      &\text{only consists of the functions which are constant on M}.
    \end{split}
  \end{equation}
  In other case, $\FEthreeSpaces{\Uh}{\Vh}{\Ph}$ is said singular
  in $M$.
  \label{def:stable.finite.elements}
\end{definition}

Stenberg shown, roughly speaking, that if this regularity condition is
satisfied, independently of the geometrical shape of the macro-elements
of some given partitioning (and under slight topological restrictions
on macro-elements, formulated in
Theorem~\ref{theorem:P1bP1P1:macroelem} below) the global inf-sup
condition~(\ref{eq:stokes-inf-sup}) holds uniformly (i.e.~with a
constant $\beta$ independent of the macro-element).

More specifically, according to the theory of Stenberg, we introduce
the following definition:

\begin{definition}
  \label{def:stenberg-equivalence}
  A macro-element $M$ is said to be equivalent (in the sense of Stenberg)
  to a reference macro-element $M^*$ if there exists a continuous
  one-to-one application $F_M : M^*\to M$ which consistently maps
  elements contained in $M^*$ to elements in $M$
  (see~\cite{Stenberg:84} or ~\cite{Stenberg:90} for details).\label{def:3}
\end{definition}
The following result holds (see
Theorem~3.1 in~\cite{Stenberg:90}):
\begin{theorem}
  \label{theorem:stenberg.sufficient.condition}
  Suppose that there exists a family of macro-elements, \Mh, of \Th
  which is composed of a fixed set of equivalence (in the sense of
  Stenberg) classes ${\cal E}_n$ of macro-elements, $n=1,...,N$ and a
  positive integer $L$ ($N$ and $L$ independent of $h$) such that:
  \begin{enumerate}
    \renewcommand{\labelenumi}{(M\arabic{enumi})}
  \item 
    \label{item:M1}
    The condition~(\ref{eq:macroelement.condition}) is satisfied
    for every $M\in \Mh$ (i.e.~\FEthreeSpaces\Uh\Vh\Ph is regular in
    macro-elements).
  \item Each $M\in \Mh$ belongs to one of the classes ${\cal E}_n$,
    $n=1,...,N$.
    \label{item:M2}
  \item If $e$ is an edge (or face) of an element of $\Th$ which is
    interior to $\domain$, then $e$ is interior to at least one
    and no more than $L$ macro-elements of $\Mh$.
    \label{item:M3}
  \end{enumerate}
  Then the Stokes discrete inf-sup
  condition~(\ref{eq:stokes-inf-sup}) holds
  for $\FEthreeSpaces{\Uh}{\Vh}{\Ph}$.
\end{theorem}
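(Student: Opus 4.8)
The plan is to reproduce Stenberg's two-scale (macro-element) argument, organised in three steps: (i) a \emph{local} inf-sup on every macro-element, with a constant uniform in $h$, obtained from the regularity condition~(M1) together with the finiteness of the equivalence classes~(M2); (ii) a \emph{macro-scale} inf-sup for pressures that are constant on each $M\in\Mh$; and (iii) a gluing of (i) and (ii) (the Verf\"urth trick), in which the overlap bound~(M3) enters. For step~(i), fix $M\in\Mh$. Condition~(\ref{eq:macroelement.condition}) states exactly that the only functions of $P_M$ that are $L^2(M)$-orthogonal to $\div(\UM\times\VM)$ are the constants; since all spaces are finite-dimensional and $\UM\times\VM\subset H_0^1(M)^d$, this is equivalent to the existence of $\beta_M>0$ with
\[
  \beta_M\,\|q\|_{L^2(M)}\ \le\ \sup_{0\neq\bwh\in\UM\times\VM}\frac{\int_M(\div\bwh)\,q}{\|\grad\bwh\|_{L^2(M)}},\qquad q\in P_M,\ \ \int_M q=0 .
\]
The essential point is a bound $\beta_M\ge\beta_*>0$ independent of $M$ and $h$. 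By~(M2) each $M$ is Stenberg-equivalent (Definition~\ref{def:stenberg-equivalence}) to a reference macro-element of one of the finitely many classes ${\cal E}_1,\dots,{\cal E}_N$, through a map $F_M$ that carries elements to elements; a change of variables in the numerator and the denominator above, performed element by element and with the Jacobians of $F_M$ and $F_M^{-1}$ controlled by the shape regularity of $\Th$, transfers the strictly positive inf-sup constant of the reference configuration to $M$ with a loss depending only on its class. Taking $\beta_*=\min_{1\le n\le N}c_n\,\beta_{{\cal E}_n}$ gives the uniform local inf-sup (equivalently: on the unit sphere of the zero-mean functions of $P_M$ the quotient is continuous and strictly positive, and up to the controlled maps only finitely many configurations occur, so no degeneration is possible).

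Step~(ii). Let $\Pi_h$ denote the $L^2(\domain)$-orthogonal projection onto the functions constant on each macro-element of $\Mh$. The macro-elements form a shape-regular coarse partition of $\domain$, so the surjectivity of $\div\colon H_0^1(\domain)^d\to L^2_0(\domain)$ yields, for a given $\ph\in\Ph$, a field with divergence $\Pi_h\ph$ and $H^1$-norm bounded by $C\|\Pi_h\ph\|_{L^2}$; composing with a Fortin-type interpolation into $\Wh$ that preserves the boundary flux $\int_{\partial M}(\cdot)\cdot n$ of every $M$ — such an operator is obtained in the standard way, correcting a quasi-interpolant by functions of $\Wh$ supported near the macro-interfaces so as to match the finitely many (one per $M$) flux constraints — produces $\bw_2\in\Wh$ with $\int_\domain(\div\bw_2)\,\Pi_h\ph\ge c_1\|\Pi_h\ph\|_{L^2}^2$ and $\|\grad\bw_2\|_{L^2}\le c_2\|\Pi_h\ph\|_{L^2}$.

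Step~(iii). Given $\ph\in\Ph$, write $\ph=(\ph-\Pi_h\ph)+\Pi_h\ph$. On each $M$, $(\ph-\Pi_h\ph)|_M$ belongs to $P_M$ with zero mean, so Step~(i) gives $\bw_M\in\UM\times\VM$ with $\int_M(\div\bw_M)(\ph-\Pi_h\ph)\ge\beta_*^2\|\ph-\Pi_h\ph\|_{L^2(M)}^2$ and $\|\grad\bw_M\|_{L^2(M)}\le\beta_*\|\ph-\Pi_h\ph\|_{L^2(M)}$; extended by zero these lie in $\Wh$, and $\bw_1=\sum_{M\in\Mh}\bw_M$ satisfies $\int_\domain(\div\bw_1)(\ph-\Pi_h\ph)\ge\beta_*^2\|\ph-\Pi_h\ph\|_{L^2}^2$ and $\|\grad\bw_1\|_{L^2}\le\beta_*\|\ph-\Pi_h\ph\|_{L^2}$ (in the general covering case one invokes~(M3): $\|\grad\bw_1\|_{L^2}^2\le L\sum_M\|\grad\bw_M\|_{L^2(M)}^2$, at the cost of routine local bookkeeping), and, since each $\bw_M$ vanishes on $\partial M$, $\int_\domain(\div\bw_1)\,\Pi_h\ph=0$. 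Finally set $\bwh=\bw_1+\delta\,\bw_2$ with $\delta>0$ small: expanding $\int_\domain(\div\bwh)\,\ph$, the only indefinite term $\delta\int_\domain(\div\bw_2)(\ph-\Pi_h\ph)$ is absorbed by Young's inequality, leaving $\int_\domain(\div\bwh)\,\ph\ge c\big(\|\ph-\Pi_h\ph\|_{L^2}^2+\|\Pi_h\ph\|_{L^2}^2\big)=c\,\|\ph\|_{L^2}^2$ and $\|\grad\bwh\|_{L^2}\le C\|\ph\|_{L^2}$, which is~(\ref{eq:stokes-inf-sup}) with $\beta=c/C$.

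The main obstacle is the uniformity claimed in Step~(i): one must check carefully that the strictly positive reference inf-sup constants do not degenerate under the maps $F_M$, which are required only to map elements consistently and need not be globally affine on $M$ — this is where (M1), (M2) and the shape regularity of $\Th$ must be combined, and it is essentially the content of Stenberg's macro-element lemma. A secondary technical point is the construction, in Step~(ii), of the flux-preserving Fortin operator into the \emph{anisotropic} velocity pair $\Uh\times\Vh$ ($\P{1,b}$ for the horizontal component and $\P1$ for the vertical one) rather than the usual symmetric space; everything else reduces to bookkeeping.
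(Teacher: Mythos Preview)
The paper does not prove this theorem itself: it is quoted as Theorem~3.1 of~\cite{Stenberg:90} and the reader is referred there for the proof. Your three-step sketch --- local inf-sup on each $M$ with a constant uniform in $h$, macro-scale control of the piecewise-constant part of the pressure, and a Verf\"urth combination --- is exactly Stenberg's argument, and the two technical points you single out (non-degeneration of the local constant under the equivalence maps $F_M$, which is Stenberg's Lemma~3.2, and the construction of a Fortin-type operator into $\Wh$ for the macro-scale step) are precisely where the work lies in~\cite{Stenberg:84,Stenberg:90}. One small remark on your secondary concern: in all the pairs considered in this paper both velocity components contain continuous $\P1$, which is what Stenberg's macro-scale construction actually needs, so the anisotropy of $\Uh\times\Vh$ is not an obstruction in Step~(ii).
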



In this work we focus on a concrete set of families of macro-elements,
which are defined as union of the elements of \Th which share one only
interior vertex.
More precisely, let us denote by $\interior{\Th}$ the set of all of
the \textit{vertices} of $\Th$ which are in the interior of
$\domain$.

\begin{definition}~
  \begin{itemize}
  \item We say that a macro-element $\Macro$ is centered in a vertex
    $q\in\interior\Th$ if $\Macro$ is the union of every $T\in\Th$
    such that $q$ is a vertex of $T$.

  \item 
    We define a vertex-centered macro-element partitioning of \Th,
    denoted by $\MhOneVertex$, as any macro-element partitioning of \Th
    such that every $\Macro\in\MhOneVertex$ is centered in some vertex
    $q\in\interior\Th$.
  \end{itemize}
%
\end{definition}
If $\Macro$ is centered in $q_0$, we denote by $\Nvertex[q_0]$ (or
just $\Nvertex$) the number of vertices $q$ in $\Macro$ such that
$q\neq q_0$. See that $\Nvertex$ matches also the number of elements
contained in $\Macro$.
For example, Figure~\ref{fig:bubble-macroelement} shows two
macro-elements, each of which is centered in a vertex, $q_0$, with
$\Nvertex=5$ in both cases. 

Shape regularity of $\Th$ implies that
$\exists\,N\in\Nset$ (independent of~$h$) such that each vertex of \Th
is in no more than $N$ elements. On the other hand, if $\Th$ is a
simplicial mesh family in $\Rset^d$, one has $d<\Nvertex[q]$. Therefore:
\begin{equation}
  \label{eq:bound_of_nv}
  d < \Nvertex[q] \le N, \quad \forall q\in\interior\Th.
\end{equation}

Fixed a reference macro-element $\Macro^*$ (centered in a vertex
$q^*\in\interior\Th$) and fixed $n=\Nvertex[q^*]$,
we denote by $\FamilyOneVertex{n}$ the family of macro-elements which
are equivalent (in the sense of Stenberg) to $\Macro^*$. Thus a
macro-element partitioning $\MhOneVertex$ is whatever such that every
$M\in\MhOneVertex$ belongs to some $\FamilyOneVertex{n}$, with
$n\in\Nset$.
Hereafter we assume that \Th satisfies the following slightly
restrictive hypothesis:
\begin{assumption}
  \label{assumption:1}
    Each element $T\in\Th$ has at least one vertex in the interior of,
    $\domain$.
\end{assumption}

  According to~(\ref{eq:bound_of_nv}), for any simplex mesh system
  \Th, one can build at least one and at most $N-d$ families of
  vertex-centered macro-elements, $ \FamilyOneVertex{d+1},\
  \FamilyOneVertex{d+2}\, ...,\ \FamilyOneVertex{N}$. Assumption~\ref{assumption:1}
    ensures that $\Th$ can be covered by macro elements of these
    families.

\begin{lemma}
  \label{lemma:macroelements.with.one.interior.vertex}
  Any vertex-centered macro-element partitioning \MhOneVertex satisfies
  \textit{(M2)} and \textit{(M3)}.
\end{lemma}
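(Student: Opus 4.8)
The argument splits into two nearly independent verifications, both resting on the elementary geometry of a vertex-centered macro-element. The plan is to first record this geometry: if $\Macro\in\MhOneVertex$ is centered at $q\in\interior\Th$, then $\Macro$ is the union of the $\Nvertex[q]$ triangles of $\Th$ having $q$ as a vertex; consecutive such triangles are glued along the edges of $\Th$ issuing from $q$, and the topological boundary of $\Macro$ is the polygon formed by the edges opposite to $q$. Consequently the edges of $\Th$ lying in the interior of $\Macro$ are \emph{precisely} those incident to $q$. Moreover, by~(\ref{eq:bound_of_nv}), the number of triangles of $\Macro$ lies between $d+1$ and $N$.

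For \textit{(M2)} I would fix, for each admissible value $n\in\{d+1,\dots,N\}$, a reference macro-element $\Macro_n^*$ consisting of $n$ triangles around a vertex $q_n^*$, and show that every $\Macro\in\MhOneVertex$ with $\Nvertex[q]=n$ is equivalent to $\Macro_n^*$ in the sense of Definition~\ref{def:stenberg-equivalence}. The map $F_\Macro\colon\Macro_n^*\to\Macro$ is built piecewise: enumerating the $n$ triangles of $\Macro_n^*$ cyclically and the triangles of $\Macro$ in the matching cyclic order, let $F_\Macro$ restricted to the $i$-th reference triangle be the affine bijection carrying its three vertices onto the corresponding vertices of the $i$-th triangle of $\Macro$, with $q_n^*\mapsto q$. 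Two consecutive affine pieces send the two endpoints of their common edge (issuing from $q_n^*$, resp.\ from $q$) to the same pair of points and are affine along that segment, hence they agree on it; so $F_\Macro$ is a well-defined continuous injection mapping each triangle of $\Macro_n^*$ onto a triangle of $\Macro$. Thus $\Macro\in\FamilyOneVertex{n}$, and since $n$ runs over the finite set $\{d+1,\dots,N\}$, every macro-element of $\MhOneVertex$ belongs to one of the finitely many classes $\FamilyOneVertex{d+1},\dots,\FamilyOneVertex{N}$; this is \textit{(M2)}.

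For \textit{(M3)}, let $e$ be an edge of $\Th$ interior to $\domain$, so that $e$ is shared by exactly two triangles of $\Th$. For the lower bound I would use that the macro-element partitioning covers $\Th$: one of the two triangles sharing $e$ lies in some $\Macro\in\MhOneVertex$, which is centered at one of that triangle's interior vertices (Assumption~\ref{assumption:1} ensuring such a vertex exists), and for a vertex-centered partitioning this center may be taken to be an endpoint of $e$; by the first paragraph, $e$ then lies in the interior of $\Macro$. For the upper bound, if $e$ lies in the interior of some $\Macro\in\MhOneVertex$ centered at $q$, then, again by the first paragraph, $q$ must be one of the two endpoints of $e$; since at most one macro-element of $\MhOneVertex$ is centered at a given vertex, $e$ lies in the interior of at most two of them, so $L:=2$ works (and $L:=d$ in the general statement). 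This establishes \textit{(M3)} and, together with \textit{(M2)}, completes the lemma (the remaining condition \textit{(M1)} of Theorem~\ref{theorem:stenberg.sufficient.condition} being the genuine ``regularity'' statement, treated separately).

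The pieces that are pure bookkeeping are \textit{(M2)} and the upper bound in \textit{(M3)}. The step that requires a genuine argument — and the main obstacle — is the lower bound in \textit{(M3)}: one must verify that \emph{every} interior edge of $\Th$ occupies, with respect to at least one macro-element of the partitioning, the ``central'' position of an edge incident to that macro-element's center. This is exactly the point where the vertex-centered construction, the covering property of a macro-element partitioning, and Assumption~\ref{assumption:1} are all used together, and it is the only place where a diagram does not already suffice.
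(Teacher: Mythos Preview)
Your treatment of \textit{(M2)} is more explicit than the paper's (which dispatches it in one line, the finitely many classes $\FamilyOneVertex{d+1},\dots,\FamilyOneVertex{N}$ having already been set up before the lemma), and your upper bound for \textit{(M3)} matches the paper's argument. The gap is in your lower bound for \textit{(M3)}.

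There you argue: a triangle $T$ sharing $e$ lies in some $\Macro\in\MhOneVertex$; the center $q$ of $\Macro$ is then a vertex of $T$; and ``this center may be taken to be an endpoint of $e$.'' The last step is unjustified. The vertices of $T$ are the two endpoints of $e$ and the vertex opposite $e$; nothing in your argument forces $q$ to be one of the former. If $q$ is the opposite vertex then $e\subset\partial\Macro$, and running the same reasoning with the second triangle sharing $e$ can fail in exactly the same way. The covering property by itself does not guarantee that some macro-element of the given partitioning is centered at an endpoint of $e$, so the detour through covering does not land where you need it to.

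The paper's proof proceeds differently and more directly: it asserts that any interior edge has at least one endpoint $q_0$ in the interior of $\domain$, and then simply takes the macro-element centered at $q_0$. You correctly identified the lower bound as the crux, but your route through the covering property does not close the argument; the paper's line---first locate an interior endpoint of $e$, then take its star---is the one to follow.
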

\begin{proof}
  Only \textit{(M3)} must be satisfied.
  If $e$ is an edge (or face) in the interior of $\domain$,
  then it is adjacent to, at least, one interior vertex, $q_0$ and
  then $e$ is interior to, at least, the macro-element centered in
  $q_0$.  On the other hand $e$ is interior to (at most) $d$
  macro-elements: that one which are centered in $q_0$ and those ones
  centered in $q_i$, $i=1,...,d-1$, where $q_i$ are the other vertices
  of $e$.
\end{proof}

Thus (in meshes satisfying~Assumption~\ref{assumption:1})
Lemma~\ref{lemma:macroelements.with.one.interior.vertex} and
Theorem~\ref{theorem:stenberg.sufficient.condition} state that
accomplishing the regularity condition~\textit{(M1)} in some
vertex-centered macro-element partitioning $\MhOneVertex$ is sufficient
for inf-sup condition~(\ref{eq:stokes-inf-sup}).
The drawback is that, as it is shown below, not all of 
vertex-centered macro-elements satisfy hypothesis~\textit{(M1)}.  Next
we provide a result characterizing the macro-elements of $\MhOneVertex$
where that hypothesis (in fact~(\ref{eq:macroelement.condition}))
holds for \FEthreeSpacesP{1,b}{1}{1}. The idea is checking the macro-element structure in the following sense.

\begin{definition}
  \label{def:macro.splitting}
  Let $\Th$ be a triangulation of $\domain\subset\Rset^d$ (in
  practice, $d=2$ or $3$) and let $M$ be a macro-element in $\Th$. We
  say that $M$ can be split by an hyperplane $\Pi$ if there exists two
  other macro-elements $M_1$ and $M_2$, composed of elements of $\Th$,
  such that $M_1\cup M_2=M$ and $M_1\cap M_2
  \subset \Pi$.
\end{definition}

\begin{definition}
  \label{def:x-perp-structure}
  A macro-element $M$ of a mesh $\Th$ is be said \xStructured  if $M$ can be split by an hyperplane
  $x=C$, for some $C\in\Rset$. In other case, it is said \xUnstructured.
\end{definition}

\newcommand{\bubbleMacro}{ 
  \begin{tikzpicture}
    \def\nv{5}
    \def\ix{2}\def\iy{1.8}
    \def\vertices{
      \coordinate [label=135:\small $q_0$] (q0) at (0,0);
      \coordinate [label=150:\small $q_1$] (q1) at  (-1.25*\ix,-0.25*\iy);
      \coordinate [label=-100:\small $q_2$] (q2) at  (-0.25*\ix,-\iy);
      \coordinate [label=-45:\small $q_3$] (q3) at  (0.75*\ix,-\iy);
      \coordinate [label=90:\small $q_4$] (q4) at (\ix*1,0.5*\iy);
      \coordinate [label=90:\small $q_5$] (q5) at (-\ix*0.25,\iy);
    }
    \def\showPoints{
      \foreach \i in {0,...,\nv}
      \fill [black] (q\i) circle (3pt);
    }
    \def\showLines{
      \draw (q1) -- (q2) -- (q3) -- (q4) -- (q5) -- cycle;
      \foreach \i in {1,...,\nv} {
        \draw (q\i) -- (q0);
      }
      \foreach \i in {1,...,\nv} {
        \draw [fill, lightgray] (x\i) circle (1.25pt);
        \draw (x\i) circle (2.25pt);
      }      
    }
    \def\showTriangles{
      \node at ($ (q1)!.5!(q2) $) [label=230:\small${T_1}$] {}; 
      \node at ($ (q2)!.5!(q3) $) [label=-90:\small${T_2}$] {}; 
      \node at ($ (q3)!.5!(q4) $) [label=-15:\small${T_3}$] {}; 
      \node at ($ (q4)!.5!(q5) $) [label=90:\small${T_4}$] {}; 
      \node at ($ (q5)!.5!(q1) $) [label=130:\small${T_5}$] {}; 
    }
    \def\xPoints {
      \coordinate [label=215: \small $r_1$] (x1) at ($ (q1)!.5!(q2)!.333!(q0) $);
      \coordinate [label=-90: \small $r_2$] (x2) at ($ (q2)!.5!(q3)!.333!(q0)  $);
      \coordinate [label=-90: \small $r_3$] (x3) at ($ (q3)!.5!(q4)!.333!(q0)  $);
      \coordinate [label=0: \small $r_4$] (x4) at ($ (q4)!.5!(q5)!.333!(q0)  $);
      \coordinate [label=-145: \small $r_5$] (x5) at ($ (q1)!.5!(q5)!.333!(q0)  $);
    }
    \vertices
    \xPoints
    \showPoints
    \showLines
    \showTriangles
  \end{tikzpicture}
}

\newcommand{\unstableBubbleMacro}{ 
  \begin{tikzpicture}
    \def\nv{5}
    \def\ix{2}\def\iy{1.8}
    \def\vertices{
      \coordinate [label=135:\small $q_0$] (q0) at (0,0);
      \coordinate [label=150:\small $q_1$] (q1) at  (-\ix,0*\iy);
      \coordinate [label=-100:\small $q_2$] (q2) at  (-0.25*\ix,-\iy);
      \coordinate [label=-45:\small $q_3$] (q3) at  (0.75*\ix,-\iy);
      \coordinate [label=90:\small $q_4$] (q4) at (\ix*1,0*\iy);
      \coordinate [label=90:\small $q_5$] (q5) at (\ix*0.1,\iy);
    }
    \def\showPoints{
      \foreach \i in {0,...,\nv}
      \fill [black] (q\i) circle (3pt);
    }
    \def\showLines{
      \draw (q1) -- (q2) -- (q3) -- (q4) -- (q5) -- cycle;
      \foreach \i in {1,...,\nv} {
        \draw (q\i) -- (q0);
      }
      \foreach \i in {1,...,\nv} {
        \draw [fill, lightgray] (x\i) circle (1.25pt);
        \draw (x\i) circle (2.25pt);
      }      
    }
    \def\showTriangles{
      \node at ($ (q1)!.5!(q2) $) [label=230:\small${T_1}$] {}; 
      \node at ($ (q2)!.5!(q3) $) [label=-90:\small${T_2}$] {}; 
      \node at ($ (q3)!.5!(q4) $) [label=-15:\small${T_3}$] {}; 
      \node at ($ (q4)!.5!(q5) $) [label=90:\small${T_4}$] {}; 
      \node at ($ (q5)!.5!(q1) $) [label=130:\small${T_5}$] {}; 
    }
    \def\xPoints {
      \coordinate (x1) at ($ (q1)!.5!(q2)!.333!(q0) $);
      \coordinate (x2) at ($ (q2)!.5!(q3)!.333!(q0)  $);
      \coordinate (x3) at ($ (q3)!.5!(q4)!.333!(q0)  $);
      \coordinate (x4) at ($ (q4)!.5!(q5)!.333!(q0)  $);
      \coordinate (x5) at ($ (q1)!.5!(q5)!.333!(q0)  $);
    }
    \vertices
    \xPoints
    \showPoints
    \showLines
    \showTriangles
    \node at ($ (q4)!.4!(q5)!-1!(q0) $) [label=0:$M^+$] {};
    \node at ($ (q4)!.333!(q3)!-0.7!(q0) $) [label=-90:$M^-$] {};
   \end{tikzpicture}
}

\begin{figure}
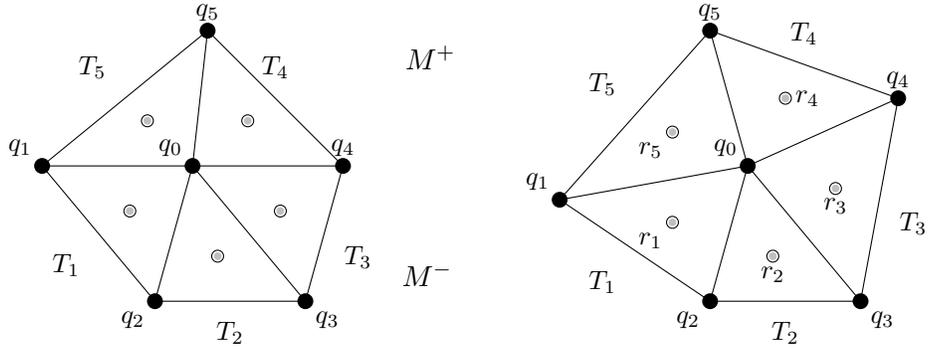

  \centering
  \begin{tabular}{cc}
    \subfloat[Macroelement that can be split by an horizontal line
    (then, it is \yStructured and 
    \FEthreeSpacesP{1,b}{1}{1} is not regular in it).]
    {
      \label{fig:bubble-macroelement.a}
      \unstableBubbleMacro
    }
    &
    \subfloat[Macroelement 
   that  cannot be split by any straight line (it is \xUnstructured and \yUnstructured).] 
    {
      \label{fig:bubble-macroelement.b}
      \bubbleMacro
    }
  \end{tabular}
  \caption{Bubble macroelements.}
  \label{fig:bubble-macroelement}
\end{figure}

Note that a \xStructured macro-element can be split by a straight line
(a plane, in the $3D$ case) which is orthogonal to the $x$ axis.
Similarly, a whole mesh \Th is said \xStructured if it can be
split by an hyperplane $x=C$, $C\in\Rset$. Analogue definitions can be provided
for \yStructured and \zStructured macro-elements or meshes.

For example: 
\begin{itemize}
\item 
Figure~\ref{fig:bubble-macroelement.a} represents a $2D$
macro-element that can be split by an horizontal line (and not by a
vertical line), then it is \yStructured (and \xUnstructured).
\item
The whole mesh in Figure~\ref{fig:structured.mesh} (i.e.~all its
vertex-centered macro-elements) is \xStructured and
\yStructured.
\item
 The macro-element represented in
Figure~\ref{fig:bubble-macroelement.b} cannot be split by any
horizontal or vertical straight line, therefore it is \xUnstructured
and \yUnstructured.
\end{itemize}
  
The following result relates the regularity in a macro-element with its
structure. In fact, it states that, in those macro-elements which are
unstructured in some direction, the bubble functions can be safely
removed from the corresponding component of the velocity field.  
\begin{theorem}   \label{theorem:P1bP1P1:macroelem}
  Let $\Macro$ be a macro-element in a vertex-centered partitioning $\MhOneVertex$. Then:
  \begin{enumerate}
  \item The combination \FEthreeSpacesP{1,b}{1}{1} is regular in
    $\Macro$ (namely~(\ref{eq:macroelement.condition}) holds for
    $\Macro$) if and only if $\Macro$ is \yUnstructured.
  \item The combination \FEthreeSpacesP{1}{1,b}{1} is regular in $\Macro$
    if and only if $\Macro$ is \xUnstructured.
  \end{enumerate}
\end{theorem}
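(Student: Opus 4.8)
The plan is to prove statement~1; statement~2 then follows by interchanging the coordinates $x$ and $y$ (equivalently, the two components of the velocity). Fix a vertex-centered macro-element $\Macro\in\MhOneVertex$ centered at a vertex $q_0$ interior to $\domain$, let $q_1,\dots,q_n$ (with $n=\Nvertex$) be its remaining vertices, numbered cyclically, and let $T_1,\dots,T_n$ be the triangles of $\Macro$, so that $T_i$ and $T_{i+1}$ share the spoke $[q_0,q_{i+1}]$ (indices mod $n$); write $y_0$ for the second coordinate of $q_0$. Since $q_0$ is the only vertex of $\Macro$ interior to $\domain$, the space $\VM$ is one-dimensional, spanned by the nodal hat function $\varphi_0$ at $q_0$, while $\UM=\mathrm{span}\{\varphi_0\}\oplus\mathrm{span}\{b_1,\dots,b_n\}$, with $b_i$ the bubble of $T_i$. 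Taking $v_h=0$ and then $u_h=0$ in~\eqref{eq:macroelement.condition}, a function $p_h\in P_M$ lies in $N_M$ if and only if $\int_{\Macro}(\partial_x u_h)\,p_h=0$ for $u_h\in\{\varphi_0,b_1,\dots,b_n\}$ and $\int_{\Macro}(\partial_y\varphi_0)\,p_h=0$. Integrating by parts (over $T_i$ for $b_i$, which vanishes on $\partial T_i$; over $\Macro$ for $\varphi_0$, which vanishes on $\partial\Macro$) and using that $p_h$ is affine on each $T_i$, one obtains $\int_{\Macro}(\partial_x b_i)\,p_h=-(\partial_x p_h|_{T_i})\int_{T_i}b_i$ with $\int_{T_i}b_i>0$, so the bubble conditions are equivalent to $\partial_x p_h\equiv 0$ on every $T_i$; granting this, the $\varphi_0$-condition in the first slot holds automatically, and the one in the second slot becomes $\sum_{i=1}^n(\partial_y p_h|_{T_i})\,A_i=0$, where $A_i:=\int_{T_i}\varphi_0>0$.

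Next I would describe the $p_h\in P_M$ with $\partial_x p_h\equiv 0$ on each $T_i$. Writing $p_h|_{T_i}=c_i+d_i y$, continuity across a spoke $[q_0,q_j]$ forces $c_{j-1}=c_j$ and $d_{j-1}=d_j$ when that spoke is not horizontal, and only $c_{j-1}+d_{j-1}y_0=c_j+d_j y_0$ when it is; and the spoke $[q_0,q_j]$ is horizontal exactly when $q_j$ has height $y_0$. Since a vertex-centered macro-element is star-shaped about its centre (the star of a vertex in a planar triangulation), the line $\{y=y_0\}$ meets $\Macro$ in a single segment through $q_0$ whose endpoints are its only points on $\partial\Macro$, so at most two of the $q_j$ can have height $y_0$; and $\Macro$ is \yStructured if and only if exactly two of them do. Indeed, if $q_k,q_l$ have height $y_0$, deleting the two horizontal spokes $[q_0,q_k]$, $[q_0,q_l]$ cuts the cyclic fan into two sub-fans whose union is $\Macro$ and whose intersection is $[q_0,q_k]\cup[q_0,q_l]\subset\{y=y_0\}$, so $\Macro$ is split by $\{y=y_0\}$; conversely, a splitting hyperplane must contain $q_0$ (which lies in every triangle, hence in $M_1\cap M_2$), so it equals $\{y=y_0\}$, and since splitting the cyclic sequence $T_1,\dots,T_n$ into the nonempty parts $M_1,M_2$ makes at least two spokes separate a triangle of $M_1$ from a triangle of $M_2$, and each such spoke then lies in $\{y=y_0\}$, at least --- hence, by the above, exactly --- two of the $q_j$ have height $y_0$.

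To conclude, I would combine these: if $\Macro$ is \yUnstructured then at most one spoke is horizontal, so the non-horizontal spokes keep the cycle $T_1,\dots,T_n$ connected and all $c_i$ (resp.\ all $d_i$) coincide, i.e.\ $p_h=c+dy$ on $\Macro$; the remaining condition then reads $d\int_{\Macro}\varphi_0=0$, so $d=0$ and $p_h$ is constant --- thus $N_M$ consists only of constants and \FEthreeSpacesP{1,b}{1}{1} is regular in $\Macro$. If $\Macro$ is \yStructured, with $q_k,q_l$ at height $y_0$, the fan splits into two sub-fans on which $p_h$ equals $c^{\pm}+d^{\pm}y$; continuity across the two horizontal spokes yields the single relation $c^+ +d^+y_0=c^- +d^-y_0$, which together with $d^+A^+ +d^-A^-=0$ (with $A^{\pm}>0$ the sums of the $A_i$ over the two sub-fans) leaves $N_M$ at least two-dimensional, hence strictly larger than the line of constants (for instance $d^+=A^-$, $d^-=-A^+$), so \FEthreeSpacesP{1,b}{1}{1} is singular in $\Macro$. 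Statement~2 follows by exchanging $x$ and $y$ throughout.

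The computations with the bubbles and the dimension count at the end are routine; the part I expect to require care is the geometry in the second paragraph --- namely that a vertex-centered macro-element is star-shaped about its centre, that this forces a splitting hyperplane to be the horizontal one through $q_0$ and a union of spokes, and hence that being \yStructured is equivalent to having exactly two boundary vertices at the height of $q_0$.
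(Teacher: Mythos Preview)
Your proof is correct and follows essentially the same approach as the paper: both use the bubble test functions to kill $\partial_x p_h$, continuity across the spokes to propagate the remaining affine data around the fan, and the single hat function $\varphi_0$ in the $v$-slot to eliminate the last degree of freedom; for the converse, your dimension count and the paper's explicit counterexample are equivalent. The one place you go a bit further than the paper is in explicitly arguing that ``\yStructured'' is equivalent to having exactly two boundary vertices at the height of $q_0$ (the paper takes this for granted), which is a welcome bit of extra care.
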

This 
Theorem can also be interpreted as follows: the standard mini-element
\FEthreeSpacesP{1,b}{1,b}1 is ``too rich'' in those macro-elements
which are unstructured in the $x$ or $y$ direction.

\newcommand{\nv}{{n_V}}

\begin{proof}
  We show only the first statement, because the second one is
  symmetric.
 
  \par\noindent\fbox{$\Leftarrow$}
  Let $\Macro\in\MhOneVertex$ be a macro-element with $q_0$ as unique
  interior vertex. Let $\Nvertex=\Nvertex[q_0]$, $q_i$
  ($i=1,...\Nvertex$) the number of elements around $q_0$ and $r_i$
  ($i=1,...,\Nvertex$) the bubble degrees of freedom (barycentres of
  the triangles in $\Macro$). If $\Macro$ is \yUnstructured then there are
  not two vertices $q_i$ and $q_j$ in $\Macro$ horizontally aligned to
  $q_0$.  For each $\ph\in N_\Macro$, that is $ \int_\Macro
  (\dx\uh+\dy\vh) \; \ph =0$, let us denote the shape functions
  $$
  \ph|_{T_i} = a_i + b_i x + c_i y,\quad \forall\,T_i\in
  \Macro.
  $$

  First let us choose $\vh=0$ and $\uh\in \UMacro$ such that $\uh = 1$ on a
  barycentre $r_i$ and $\uh=0$ on all other degrees of freedom in
  $\Macro$ ($r_j$, $j =1,...,\Nvertex$, $j\neq i$ and $q_k$, $k=0,...,\Nvertex$), that
  is $\uh$ is a bubble function on $T_i$. Then,
  \begin{equation*}
    0 = \int_\Macro (\dx\uh+\dy\vh) \; \ph 
    = \int_{T_i} \dx \uh \; \ph 
    = -\int_{T_i} \uh \; \dx \ph 
    = -b_i \int_{T_i} \uh,
  \end{equation*}
  hence
  \begin{equation}
    b_i=0 \quad \forall\, i=1,...,\Nvertex,
    \label{eq:bi=0}    
  \end{equation}
  in particular $\dx\ph=0$ in  $\Macro$.
  Now, imposing the continuity of $\ph$ on $q_0$ and assuming, without loss of generality, $q_0=(0,0)$, it is straightforward
  that there exists $a\in \Rset$ such that
  \begin{equation}
    a_1=...=a_{\Nvertex}=a. 
    \label{eq:ai=0}    
  \end{equation}

  On the other hand, imposing the continuity of $\ph$ in the remaining
  vertices $q_i=(q_{i}^x,q_{i}^y)\in\partial \Macro$ the
  common vertex to $T_{i}$ and $T_{i+1}$ (denoting $T_{\Nvertex+1}=T_1$), one has:
  \begin{equation}
    \label{eq:linear.equations.c_i}
    q_{i}^y(c_{i+1} -  c_i) = 0,
    \quad i=1,...,\Nvertex,
  \end{equation} 
  where (\ref{eq:bi=0}) and~(\ref{eq:ai=0}) have been applied. Therefore
  if $q_i^y\neq 0$ for all $i$, that is if there is no vertex
  horizontally aligned with $q_0$, then there exists $c\in \Rset$ such that
  \begin{equation}
    c_1=...=c_{\Nvertex}=c.
    \label{eq:ci=0}    
  \end{equation}
  This equality is also true when there is only one vertex
  horizontally aligned with $q_0$, because in (\ref{eq:ci=0}) there is one equation plus than unknowns $c_1,...,c_{\Nvertex}$.

  Finally let us choose $\uh=0$ and $\vh\in\VMacro$ defined as $\vh=1$ in $q_0$
  and $\vh=0$ in all other degrees of freedom ($q_i$,
  $i=1,...,\Nvertex$). Applying the quadrature formula $ \int_T f \,dx\;dy
  = \frac{|T|}{3} \sum_{i=1}^3 f(q_i^T)$ with
  $q_i^T$  the vertices of $T$, which is exact in \P1,
  \begin{equation*}
    0 = \int_\Macro \dy\vh \; \ph = - \sum_{i=1}^{\Nvertex} \int_{T_i}\vh \dy\ph
    = -\sum_{i=1}^{\Nvertex} c_i \frac{|T_i|}{3}
  \end{equation*}
  and considering~(\ref{eq:ci=0}) we conclude $c_i=0$ for all
  $i$. Consequently,  $\ph$ is constant on  $\Macro$ and  macro-element
  condition~(\ref{eq:macroelement.condition}) holds.

  \par\noindent\fbox{$\Rightarrow$}
  Assume there are two other vertices horizontally aligned with
  $q_0=(0,0)$ (without loss of generality they are being indexed $q_{j}$
  and $q_{\Nvertex}$). From~(\ref{eq:linear.equations.c_i}) we only deduce:
  \begin{equation*}
    \begin{split}
      c_1&=c_2=...=c_{j}, \\
      c_{j+1}&=c_{j+2}=...=c_{\Nvertex},
    \end{split}
  \end{equation*}
  and in this case, for some $c$, $\widetilde c\in\Rset$,
  \begin{equation}
    \dy\ph(x,y) =
    \begin{cases}
      c \quad \text{ if } y>0, \\
      \widetilde c \quad \text{ if } y\le 0.
    \end{cases}
    \label{eq:ci-two-vertices}  
  \end{equation}
  We are going to use this idea to obtain a counterexample. Indeed, now $\Macro$
  can be split into two disjoint regions: $\Macro^+=\{(x,y)\in \Macro \st y>0\}$
  and $\Macro^-=\{(x,y)\in \Macro \st y<0\}$, each of which can be written as
  union of triangles of $\Macro$ (see
  Figure~\ref{fig:bubble-macroelement}b). For any parameters $a$ and
  $c\in\Rset$, let us define $\ph\in\PMacro$ as follows:
  \begin{equation*}
    \ph(x,y)=
    \begin{cases}
      a-\dfrac{c}{|\Macro^+|} \; y & \text{if } (x,y) \in \Macro^+, \\
      a+\dfrac{c}{|\Macro^-|} \; y & \text{if } (x,y) \in \Macro^-,
    \end{cases}
  \end{equation*}
  where $|R|$ denotes the area of a region $R\subset\Rset^2$. Every
  $\vh\in\VMacro$ can be characterized as 
  the $\P1$ basis function that is equal to $\alpha=\vh(q_0)$ in the
  vertex $q_0$ and $0$ in all other vertices in $\Macro$. Hence one has
  (applying the above mass-lumping quadrature formula):
  \begin{align*}
    \int_{\Macro} \dy \vv_h \; \ph &= -\int_{\Macro} \vv_h \; \dy \ph
    =   
    \frac{c}{|\Macro^+|} \int_{\Macro^+} \vv_h - \frac{c}{|\Macro^-|} \int_{\Macro^-} \vv_h
   \\
    &=
    \frac{c}{|\Macro^+|} \sum_{T\subset {\Macro^+}} \frac{|T|}{3} \alpha
    -
    \frac{c}{|\Macro^-|} \sum_{T\subset {\Macro^-}} \frac{|T|}{3} \alpha
    =\frac{c}{3}\alpha - \frac{c}{3} \alpha
    =0
  \end{align*}

  Then, taking into account that $\dx\ph=0$, the equality
  $\int_\Macro(\dx\uh+\dy\vh)\ph=0$ is satisfied for all $(\uh,\vh)\in
  \UMacro\times \VMacro$, therefore~(\ref{eq:macroelement.condition}) does
  not hold.
\end{proof}

Now, we are interested in extending the local stability result of
Theorem~\ref{theorem:P1bP1P1-mesh-stability} to a global result for
the whole \Th.  According to
Theorem~\ref{theorem:stenberg.sufficient.condition} and
Lemma~\ref{lemma:macroelements.with.one.interior.vertex}, in order to
apply the macro-element theory for the particular family
$\MhOneVertex$, it must be shown that macro-element regularity
condition~(\ref{eq:macroelement.condition}) holds for all
$\Macro\in\MhOneVertex$, namely for every macro-element in the families
$\FamilyOneVertex{d+1},\ ...,\ \FamilyOneVertex{N}$.  The drawback is
that \yStructured macro-elements are part of these families, and
regularity condition is not satisfied for \yStructured macro-elements,
according to Theorem~\ref{theorem:P1bP1P1-mesh-stability} (in the case
of \FEthreeSpacesP{1,b}11).

Note that even if a concrete mesh family, $\Th$, is built
where every $\Macro$ is \yUnstructured (and then regularity
condition~(\ref{eq:macroelement.condition}) is satisfied for every
$\Macro$) one cannot apply
Theorem~\ref{theorem:stenberg.sufficient.condition} to conclude that
inf-sup condition~(\ref{eq:discreteStokes.b}) holds. Indeed, in
Theorem~\ref{theorem:stenberg.sufficient.condition}, $\Mh$ is composed
of \emph{every of the equivalent macro-elements in the sense of
  Stenberg} (Definition~\ref{def:stenberg-equivalence}), independently
of a concrete mesh family (and hence including \yStructured
macro-elements). 
In fact one could build an unstructured mesh family converging to a
structured one, hence stability is not satisfied when $h\to 0$.
Numerical test~\ref{tst:P211-non-unif-unstruct-meshes} shows an
example related to this fact. The solution is consider meshes which
are uniformly unstructured in the following sense.

Let $\MhOneVertex$ be a vertex-centered macro-element partitioning  a
mesh \Th. Let $\Macro\in\MhOneVertex$, let $q$ be its interior vertex
and $\{T_1,...,T_n\}$ the elements of \Th contained in $\Macro$. For
$i=1,...,n-1$, we denote by $\sigma_i$ the angle between the
positive horizontal semiaxis from $q$ and the common edge of the
triangles $T_i$ and $T_{i+1}$ of $\Macro$. Similarly, $\sigma_n$ is
defined using the common edge of $T_n$ and $T_1$.
\begin{definition}
  \label{def:uniform-y-unstructured-parition}
  A vertex-centered macro-element partitioning $\MhOneVertex$  is said uniformly
  \yUnstructured if there exists $\CUnifStruct>0$, independent of $h$,
  such that $|\sin(\sigma_i)| \ge \CUnifStruct$ for all
  $i\in\{1,...,n\}$ except, eventually, one $i_0\in\{1,...,n\}$. A
  uniformly \xUnstructured partitioning is similarly defined (using
  $|\cos(\sigma_i)|$).
\end{definition}
In particular, any uniformly \yUnstructured macro-element partitioning is
composed of \yUnstructured macro-elements and therefore satisfying 
regularity condition~(\ref{eq:macroelement.condition}) for
\FEthreeSpacesP{1,b}11, according to local
Theorem~\ref{theorem:P1bP1P1:macroelem}. Now we extend
Theorem~\ref{theorem:P1bP1P1:macroelem} to a global condition for the
stability of \FEthreeSpacesP{1,b}11 (and
\FEthreeSpacesP1{1,b}1) in a 2D mesh \Th.  Its proof is based on a
relaxation of Stenberg's notion of macro-element equivalence while
strengthening (respect to
Theorem~\ref{theorem:stenberg.sufficient.condition}) the assumptions
on \Th.

\begin{definition}
  \label{def:uniform-yUnstruct-mesh}
  A mesh (family) \Th is said uniformly \yUnstructured
  (\xUnstructured) if it can be covered by a uniformly \yUnstructured 
  (\xUnstructured) vertex-centered macro-element partitioning
  \MhOneVertex.
\end{definition}

\begin{remark}
  \label{rk:xStr-yUnstr-mesh}
  For instance, Figure~\ref{fig:xStr-yUnstr-mesh} shows a mesh \Th
  which is \yUnstructured, i.e.~composed by \yUnstructured
  macro-elements $\Macro\in\MhOneVertex$. In particular one has
  regularity of \FEthreeSpacesP{1,b}11 in every $\Macro$. Indeed,
  $|\sin(\sigma_i)|\ge\sin(\pi/4)>0$ for all $\sigma_i$ in every
  $\Macro\in\MhOneVertex$. If the mesh family \Th preserves this structure
  when $h$ vanishes, it is uniformly \yUnstructured.
  \begin{figure}
    \centering
    \pgfimage[width=0.5\linewidth,height=5\baselineskip]{\imgdir/mesh-xStr-yUnstr}
    \caption{A \xStructured and \yUnstructured mesh, where
      \FEthreeSpacesP{1,b}{1}{1} is Stokes-stable.}
    \label{fig:xStr-yUnstr-mesh}
  \end{figure}
\end{remark}
\begin{theorem}[Global stability  of \FEthreeSpacesP{1,b}11 and
  \FEthreeSpacesP1{1,b}1]
  Let us consider a triangulation $\Th$ of a domain
  $\domain\subset\Rset^2$ satisfying Assumption~\ref{assumption:1}.
  \begin{enumerate}
  \item If \Th is uniformly \yUnstructured, then inf-sup
    condition~(\ref{eq:stokes-inf-sup}) holds for
    \FEthreeSpacesP{1,b}11 elements.
  \item If \Th is uniformly \xUnstructured, then inf-sup
    condition~(\ref{eq:stokes-inf-sup}) holds for
    \FEthreeSpacesP1{1,b}1 elements.
  \end{enumerate}

  \label{theorem:P1bP1P1-mesh-stability}
\end{theorem}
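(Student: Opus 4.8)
The plan is to follow the strategy outlined just before the theorem statement: we want to apply Stenberg's macro-element machinery (Theorem~\ref{theorem:stenberg.sufficient.condition}) but, as noted in the text, we cannot do so directly, because the equivalence classes $\FamilyOneVertex{n}$ that Stenberg's theorem ranges over necessarily contain \yStructured macro-elements, for which regularity~(\ref{eq:macroelement.condition}) fails by Theorem~\ref{theorem:P1bP1P1:macroelem}. So the first move is to \emph{relax} the notion of equivalence: instead of asking that all macro-elements equivalent to a reference one be considered, we work with the concrete vertex-centered partitioning $\MhOneVertex$ furnished by the uniform \yUnstructuredness hypothesis, and we introduce a refined equivalence notion that groups macro-elements not just by combinatorial type ($\Nvertex$, edge incidences) but also by the sign pattern and a quantitative lower bound $|\sin(\sigma_i)|\ge\CUnifStruct$ on the angles. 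The point of the uniformity constant is exactly to make these refined classes finite (up to $h$) so that a compactness/continuity argument over a reference configuration yields a \emph{uniform} local inf-sup constant $\beta_M\ge\beta_0>0$.

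The key steps, in order, are: (i) By Lemma~\ref{lemma:macroelements.with.one.interior.vertex}, the partitioning $\MhOneVertex$ already satisfies (M2) and (M3) (with $L=d$), and by Assumption~\ref{assumption:1} it covers $\Th$; so only a uniform version of the local regularity/inf-sup estimate is missing. (ii) For a fixed combinatorial type $n=\Nvertex$, fixed sign pattern of the $q_i^y$, and fixed lower bound $\CUnifStruct$, the set of admissible reference macro-elements (normalized, say, so that $q_0=0$ and diameter $1$, with shape-regularity bounds from $\Th$ and the angle bound from uniform \yUnstructuredness) is a compact set of geometric configurations. On each such configuration the local inf-sup quantity
\[
  \beta_M \;=\; \inf_{0\neq\ph\in P_M/\Rset}\ \sup_{0\neq(\uh,\vh)\in\UM\times\VM}
  \frac{\int_M(\dx\uh+\dy\vh)\,\ph}{\|\ph\|_{L^2(M)}\,\|\grad(\uh,\vh)\|_{L^2(M)}}
\]
is a continuous function of the configuration, and by Theorem~\ref{theorem:P1bP1P1:macroelem} it is strictly positive at every point of this compact set (each such macro-element is \yUnstructured). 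Hence $\beta_M\ge\beta_0(n,\CUnifStruct,\text{shape-reg.})>0$ uniformly. (iii) Since $\Th$ is shape-regular, by~(\ref{eq:bound_of_nv}) only finitely many types $n\in\{d+1,\dots,N\}$ occur; taking the minimum over these finitely many $\beta_0$'s gives one constant $\beta_*>0$ that bounds all local inf-sup constants from below. (iv) Finally, assemble: run Stenberg's patching argument (Theorem~\ref{theorem:stenberg.sufficient.condition}, or rather its proof, which only uses (M1)--(M3) plus a uniform \emph{local} inf-sup bound in place of the finite list of equivalence classes) to glue the local estimates into the global inf-sup condition~(\ref{eq:stokes-inf-sup}), with $\beta$ depending only on $\beta_*$, $L=d$, and the shape-regularity constant. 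Statement~2 follows by exchanging the roles of $x$ and $y$, i.e.\ by the symmetry already invoked in Theorem~\ref{theorem:P1bP1P1:macroelem}.

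The main obstacle is step~(iv): verifying that Stenberg's global argument genuinely goes through with the relaxed equivalence notion. In~\cite{Stenberg:84,Stenberg:90} finiteness of the equivalence classes is used to extract a uniform local constant by a trivial minimum over finitely many reference macro-elements; here the family of reference shapes within a class is a continuum, so one must instead argue uniformity by the compactness/continuity argument of step~(ii), and then check that nothing else in Stenberg's assembly (the partition of unity / bound on overlaps via (M3), the scaling between the reference and physical macro-element via shape-regularity) secretly relied on having only finitely many shapes. It does not — the overlap bound is $L=d$ regardless, and the scaling constants are controlled by shape-regularity — but writing this carefully, and in particular making precise the ``normalization'' of a macro-element and the topology on configurations in which $\beta_M$ is continuous, is the delicate part. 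A secondary point to handle cleanly is the ``eventually one $i_0$'' exception in Definition~\ref{def:uniform-y-unstructured-parition}: one must confirm that allowing a single degenerate angle per macro-element does not destroy strict positivity of $\beta_M$, which is exactly what the $\Rightarrow$/$\Leftarrow$ analysis in the proof of Theorem~\ref{theorem:P1bP1P1:macroelem} already shows (one horizontally aligned vertex is harmless; two are not).
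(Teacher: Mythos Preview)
Your proposal is correct and follows essentially the same route as the paper: refine Stenberg's equivalence classes by additionally imposing the uniform \yUnstructured constraint $|\sin(\sigma_i)|\ge\CUnifStruct$, then recover a uniform local inf-sup constant by a compactness/continuity argument on the (now closed, still compact) set of admissible macro-element configurations, and finally feed this into Stenberg's assembly (Lemma~3.2 of~\cite{Stenberg:90}). The paper's version is terser---it simply notes that the restricted configuration set is a closed subset of Stenberg's original compact set and that $\beta_M$ remains continuous---but your more explicit normalization and your treatment of the ``one exceptional $i_0$'' clause are entirely in the same spirit.
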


\newcommand{\alphaUnstructured}{%
\CUnifStruct--uniform and \yUnstructured}
\begin{proof}
  Let us focus on the first case. It is sufficient to follow the
  macro-element theory as appears in~\cite{Stenberg:90}, for the
  particular case where \Th is a uniformly \yUnstructured mesh and
  \MhOneVertex is a uniform vertex-centered macro-element partitioning,
  with constant $\CUnifStruct$ independent of $h$.

  The key is in Lemma~3.2 of~\cite{Stenberg:90}, whose proof still
  holds if, instead of equivalent macro-elements (in the sense of
  Stenberg), we consider for each $n=d+1,...,N$ the following
  ``\alphaUnstructured'' equivalence relation (whose equivalence
  classes will be denoted by \FamilyOneVertex{{n,\CUnifStruct}}):
  $\Macro^*$ \text{ and } $\Macro$ are equivalent if the following
  conditions are fulfilled:
  \begin{enumerate}[label=\textit{\alph*)}]
  \item Both $\Macro^*$ and $\Macro$ are in $\FamilyOneVertex{n}$ (in
    particular, they are equivalent in the sense of Stenberg).
  \item $\Macro^*$ and $\Macro$ are uniformly \yUnstructured with constant
    $\CUnifStruct$.
  \end{enumerate}


  Specifically, in Lemma~3.2 of~\cite{Stenberg:90}, let us fix
  $\Family=\FamilyOneVertex{{n,\CUnifStruct}})$.
  Its proof can be followed line by line, under the following
  considerations:
  \begin{enumerate}
  \item Every $M\in \FamilyOneVertex{{n,\CUnifStruct}}$ is
    \yUnstructured, therefore Theorem~\ref{theorem:P1bP1P1:macroelem}
    yields $\beta_M>0$ where 
    $$
    \beta_M = \min_{\stackrel {\ph\in\PM} {\|\ph\|_M=1}} \max_{\stackrel
      {\ph\in\WW_M} {\|\wh\|_{1,M}=1}} (\div\wh,\ph)
    $$
  \item The function $\beta_M=\beta_M(x^1,...,x^k)$, where $x^i$ are
    the vertices of $M$, remains a
    continuous  function.
  \item For any macro-element $M$, let $X(M)$ be the set of coordinates
    of $x^i$ (considered as points $X\in\Rset^{dk}$). Then
    $$
    \{ X(M)\ / \ M\in \FamilyOneVertex{{n,\CUnifStruct}}\}  \subset 
    \{ X(M)\ / \ M\in \FamilyOneVertex{n} \}.
    $$
  \end{enumerate}
  Following the proof of Lemma~3.2 of~\cite{Stenberg:90}, one has that the
  set on the right hand side is compact. Since the set on left is closed,
  one can conclude the proof with
  $\beta_M\ge\beta_{n,\CUnifStruct}>0$. Since $n<N$ and
  $\CUnifStruct$ if fixed, we have an uniform bound.
\end{proof}

\begin{remark}
  \label{rk:mesh-generators-and-error-estimates}
Starting from discrete inf-sup condition 
  (\ref{eq:stokes-inf-sup}), error estimates analysis of mixed
  formulations can be applied \cite{Brezzi-Fortin:91}, obtaining
  \begin{equation}\label{error-estim}
  \|\ww - \ww_h\|_{H^1} + \| p - p_h\|_{L^2} \le C\, h\quad \hbox{and}\quad 
    \|\ww - \ww_h\|_{L^2} \le C\, h^2,
\end{equation}
where $C>0$ depends on the $H^2\times H^1$ norm of $(\ww,p)$.  It is
well-known that second inequality of~(\ref{error-estim}) is based on a
duality argument which requires an assumption on the domain
(regularity or convexity).  
Numerical test~\ref{test:b1-1-error-orders} confirms these statements.
\end{remark}
 
\begin{figure}
    \centering
    \begin{tikzpicture}[>=latex]
      \def\ix{2}\def\iy{1.2} \def\vertices{ \foreach \i in {1,...,5} {
          \foreach \j in {1,...,4} { \coordinate (q\i\j) at
            (\ix*\i,\iy*\j); } } } \def\showLines{
        \draw (q11)--(q51) -- (q54) -- (q14) -- cycle;
        \foreach \i in {1,...,5} { \draw (q\i1) -- (q\i4); } \foreach \j
        in {1,...,4} { \draw (q1\j) -- (q5\j); } \draw (q13) -- (q24);
        \draw (q12) -- (q34); \draw (q11) -- (q44); \draw (q21) --
        (q54); \draw (q31) -- (q53); \draw (q41) -- (q52); \draw [line
        width = 1.1pt] (q21) -- (q31) -- (q42) -- (q43) -- (q33) --
        (q22) -- cycle; } \def\showNodes{ \node (texto) at
        (2.1*\ix,0.25*\iy) {$q_j$}; \node (control1) at
        (2.7*\ix,0.75*\iy) {}; \node (control2) at (2.9*\ix,1.5*\iy) {};
        \draw [black, fill] (q32) circle (1.5pt); \draw [->] (texto)
        .. controls (control1) and (control2) .. (q32); }
      \def\showLayers { \node at ($ (q11)!.5!(q12) $) [left] {\small $
          p(x,z)=- ( y - \frac{1}{6}) \text{ in } L_1 \rightarrow$};
        \node at ($ (q12)!.5!(q13) $) [left] {\small $ p(x,z)= ( y -
          \frac{1}{2}) \text{ in } L_2 \rightarrow$}; \node at ($
        (q13)!.5!(q14) $) [left] {\small $ p(x,z)= -( y - \frac{5}{6})
          \text{ in } L_3 \rightarrow$}; } \def\showCoords { \node at
        (q11) [below] {\small $x_0=0$};

        \node at (q21) [below] {\small $x_1 = \frac 1 4$}; \node at
        (q31) [below] {\small $x_2 = \frac 1 2$}; \node at (q41) [below]
        {\small $x_3 = \frac 3 4$}; \node at (q51) [below] {\small $x_4
          = 1$};

        \node at (q51) [right] {\small $y_0=0$}; \node at (q52) [right]
        {\small $y_1 = \frac 1 3$}; \node at (q53) [right] {\small $y_2
          = \frac 2 3$}; \node at (q54) [right] {\small $y_3 = 1$}; }
      \vertices \showLines \showNodes \showLayers \showCoords
    \end{tikzpicture}
    \caption{A mesh used as counterexample for showing that
      \FEthreeSpacesP{1,b}{1}{1} is not stable in structured meshes}
    \label{fig:structured.mesh}
  \end{figure}
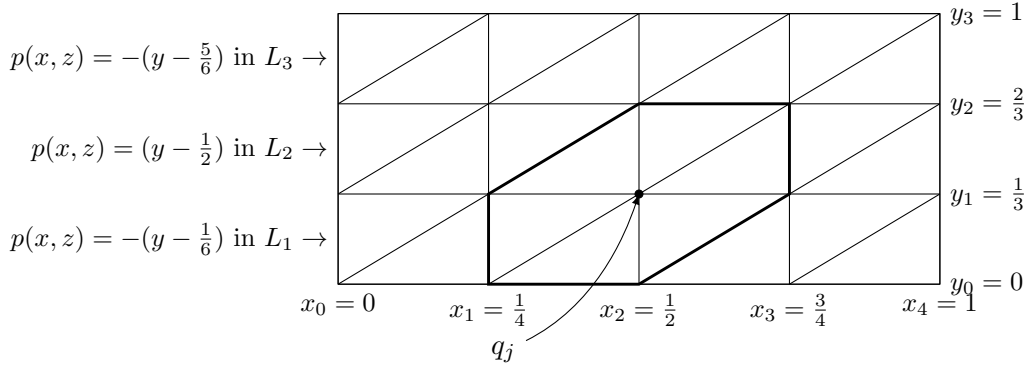

  \begin{example}
    \label{ex:unstructured-mesh}
    It can be demonstrated that in meshes having a strong
    structure (where every $\widehat M\in\MhOneVertex$ is \xStructured
    and \yStructured) LBB condition~(\ref{eq:stokes-inf-sup}) does not
    hold for \FEthreeSpacesP{1,b}11 FE.  For example, let \Th be the
    mesh of Figure~\ref{fig:structured.mesh}. We are going to show
    that there exists a family of non null pressures $\ph\in\Ph$
    satisfying
    \begin{equation}
      \label{eq:p.orthogonal.div(u,v)}
      \int_\Omega \div(u_h,v_h)\ph=0 \quad \forall u_h\in\Uh \text{ and } v_h\in\Vh,
    \end{equation}
    hence the discrete Stokes inf-sup
    condition~(\ref{eq:stokes-inf-sup}) does not hold for
    \FEthreeSpacesP{1,b}11 elements.
  
    Let $x_0,x_1,...,x_k\in\Rset$ and $y_0,y_1,...,y_l\in\Rset$ be two
    uniform partitionings of two generic intervals $[a,b]$, $[c,d]$
    and let $\Th$ be the resulting structured mesh in the rectangle
    $[a,b]\times [c,d]$, with $l$ vertical layers,
    $L_i=(y_{i-1},y_i)$, $i=1,...,l$.  As suggested by the proof of
    Theorem~\ref{theorem:P1bP1P1:macroelem}, let us choose two
    parameters $\widetilde a, \widetilde c\in\Rset$, $\widetilde c\neq
    0$ and consider
  $$
  \ph(x,y) = \widetilde a + (-1)^i \; \widetilde c \; (y-r_i)
  \quad\text{ if $(x,y)\in L_i$}, \quad\text{for some $r_i\in\Rset$}.
  $$
  It is straightforward to test that the condition $\ph\in
  C^0(\overline\domain)$ can be satisfied selecting $r_i=(y_{i-1}+y_i)
  / 2$ and, in this case, if $\widetilde a=0$ then $\ph\in\Ph$ because
  $\int_\domain \ph=0$. Figure~\ref{fig:structured.mesh} shows a
  particular case, with $\widetilde c=1$.
  
  Let $q_j$ and $\phi_j$, $j=1,...,N=(k-1)(l-1)$ be respectively the
  vertices in the interior of $\Th$ and the associated $\P1$ basis
  functions. As $\dx \ph=0$, if $(\uh,\vh)\in \Uh\times\Vh$ then
  $$
  \int_\Omega \div(u_h,v_h)\ph=-\int_\Omega v_h\dy \ph=\sum_{j=1}^N
  a_j \int_\Omega \phi_j \dy \ph
  $$ 
  for some $a_j\in\Rset$. For all $j=1,...,N$, let $T_{ij}$
  ($i=1,\dots ,6$) be the triangles adjacent to the interior vertex
  $q_j$, forming a macro-element like in
  Figure~\ref{fig:structured.mesh}. Without loss of generality, we can
  suppose that $T_{1j}, T_{2j}$ and $T_{3j}$ are laying in an even
  layer, $T_{4j}, T_{5j}$ and $T_{6j}$ in an odd one and $|T_{ij}|=1$
  for all $i$. Then,
  \begin{equation}
    \label{eq:7}
    \int_\Omega \phi_j \dy \ph = \sum_{i=1}^3  \widetilde c \int_{T_{ij}}
    \phi_j -  \sum_{i=4}^6  \widetilde c \int_{T_{ij}}
    \phi_j = \frac{\widetilde c}{3} \left(\sum_{i=1}^3 |T_{ij}| -  \sum_{i=4}^6 |T_{ij}|\right)=0.
  \end{equation}
  Therefore $\int_\Omega\phi_j \dy \ph=0$ for all $j=1,...,N$ and
  (\ref{eq:p.orthogonal.div(u,v)}) holds.
  
  This example is also valid to prove that \FEthreeSpacesP1{1,b}1 is
  not stable in structured meshes.
  \label{rk:example.1}
\end{example}
\begin{remark}
  \label{rk:P1bP1P1-not-stability-in-structured-mesh}
  Theorem~\ref{theorem:P1bP1P1-mesh-stability} provides a sufficient
  but not necessary condition for the (global) stability of 
    \FEthreeSpacesP1{1,b}1 (and \FEthreeSpacesP{1,b}11). 
    Indeed some other (not vertex-centered) macro-element families may
    also conduce to stability.
  %
  \end{remark}
  In order to ensure that \Th is uniformly \xUnstructured (and
  therefore \FEthreeSpacesP1{1,b}1 is stable), the following mesh
  post-processing algorithm may be applied (similarly for
  \FEthreeSpacesP{1,b}11 in \yUnstructured meshes):

    \begin{algorithm}~
      \label{alg:x-unstruct}
      \begin{itemize}
      \item Fix a horizontal ``unstructuring factor''  $r\in(0,1)$ and define $h_r=r\cdot h$, where $h$ is  the mesh size. For
        instance, $r=0.15$ in our experiments.  If $r$ is too small, mesh might not be unstructured enough  and if $r$ is too big, mesh is too deformed. 
      \item For each interior vertex, $q_0$, let $\Macro_{q_0}$ be the macro
        element centered in $q_0$ and let $q_1,...,q_{\Nvertex}$ be
        the other vertices in $\Macro_{q_0}$. We denote $q_i=(q_i^x,q_i^y)$
        its coordinates.
        \begin{itemize}
        \item For each $i=1,...,\Nvertex$, let $d_i=q_i^x - q_0^x$. If
          $|d_i|<h_r$ (i.e.~the edge $q_0q_i$ is ``almost vertical'')
          then:
          \begin{itemize}
          \item For each $j=i+1,...,\Nvertex$, if $|q_j^x-q_0^x| <
            h_r$ (then $\Macro_{q_0}$ is ``almost \xStructured''):
            \begin{itemize}
            \item If $d_i>0$:
              $$q_0^x = q_0^x - (h_r - d_i), \qquad\text{(move $q_0$ to
                the left)}$$
            \item else:
              $$q_0^x = q_0^x + (h_r + d_i),\qquad\text{(move $q_0$ to
                the right).}$$
            \end{itemize}
         \end{itemize}
        \end{itemize}
      \end{itemize}
    \end{algorithm}

    See numerical test~\ref{test:algorithm} for a practical
    experiment.  Note that this algorithm ensures that the resulting
    mesh is uniformly \xUnstructured. Indeed, if for some
    $\Macro_{q_0}$ one has two edges, $q_0q_i$ and $q_0q_j$, ``almost
    vertical'', $q_0$ is displaced and then $|q_0^x-q_i^x|=h_r$. In
    consequence, $\Macro_{q_0}$ and also $\Macro_{q_i}$ (if $q_i$ is an
    interior vertex) are \xUnstructured. Note that, in particular,
    $q_i$ shall not be displaced in following iterations and thus
    $\Macro_{q_0}$ remains \xUnstructured.


\subsection{The $3D$ case}
\label{sec:P1bP1P1-3D}

\renewcommand{\UU}{U}%
\renewcommand{\Uh}{U_h}%
\renewcommand{\VV}{\textbf{V}}%
\renewcommand{\Vh}{\textbf{V}_h}%
\renewcommand{\uu}{u}%
\renewcommand{\uh}{\uu_{h}}%
\renewcommand{\vv}{\textbf{v}}%
\renewcommand{\vh}{\vv_{h}}%
\renewcommand{\divx}{\nabla_x\cdot}%

Results obtained in Section~\ref{sec:P1bP1-P1-2d} are now
generalized to $3D$ domains. Specifically, we aboard the stability of
the FE spaces which arise when one (or two) of the three components of
a $\P1$ continuous approximation of velocity field is enriched with a
bubble per element. Thus Theorem~\ref{theorem:P1bP1P1:macroelem} is
extended to the $3D$ in two ways, depending on whether two or one of
the components are enriched.

\subsubsection*{First possibility: \FEfourSpacesP1{1,b}{1,b}1}
First we focus on the approximation of the discrete Stokes
equations~(\ref{eq:discreteStokes.a})--(\ref{eq:discreteStokes.b}) in
the velocity space $\Wh=\Uh\times\Vh$, being $\Uh$ a \P1 (continuous)
space and $\Vh$ the product of two $\P{1,b}$ (continuous) spaces,
$V_1\times V_2$. Also $\Ph$ is taken as a \P1 continuous space, as
defined in~(\ref{eq:P1PhSpace}).

Let $\Th$ be a $3D$ mesh composed of tetrahedrons, denoted as $K_i$,
satisfying Assumption~\ref{assumption:1} and let $\MhOneVertex$ be the
associated $3D$ vertex-centered macro-element partitioning.  Let $\WM$ be
the space of functions $\wh=(\uh,v_h^1,v_h^2)\in H_0^1(M)^3\cap
C^0(\overline M)^3$ such that $\uh$ is a $\P1$ function in each
$K\subset M$ while $v_h^1,v_h^2$ are $\P{1,b}$ functions and let $\PM$
be the space of $\P1$ continuous functions in each tetrahedron of $M$.
Due to Theorem~\ref{theorem:stenberg.sufficient.condition} and
Lemma~\ref{lemma:macroelements.with.one.interior.vertex}, a sufficient
condition for the stability of \FEfourSpacesP{1}{1,b}{1,b}1 in \Th is
the following $3D$ condition of regularity in macro-elements, for all
$\Macro\in\MhOneVertex$:
\begin{equation}
  \begin{split}
    \label{eq:macroelement.condition.3d}
    &\text{The set } N_\Macro=\left\{ \ph\in P_\Macro \st \int_\Macro \ph \; \div\wh=0, \
      \forall \wh\in \WMacro \right\}
    \\
    &\text{only consists of the functions which are constant on $\Macro$}.
  \end{split}
\end{equation}

Next we are going to prove the following local regularity result,
which constitutes a $3D$ version of
Theorem~\ref{theorem:P1bP1P1:macroelem}.
\begin{theorem}
  \label{theorem:P1P1bP1b-P1:macroelem.3D}
  Let $\Macro$ be a macro-element in a vertex-centered partitioning
  $\MhOneVertex$. Then the combination \FEfourSpacesP{1}{1,b}{1,b}{1}
  is regular in $\widehat M$ if and only if $\widehat M$ is
  \xUnstructured. The cases \FEfourSpacesP{1,b}1{1,b}{1} and
  \FEfourSpacesP{1,b}{1,b}1{1} are symmetric.
\end{theorem}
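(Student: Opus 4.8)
The plan is to follow the proof of the two-dimensional Theorem~\ref{theorem:P1bP1P1:macroelem} step for step; the one genuinely new point is that the cyclic ordering of the triangles around the interior vertex must be replaced by a connectivity argument on the star of that vertex. Write $\Macro$ as the union of the tetrahedra $K_1,\dots,K_{\Nvertex}$ sharing the interior vertex $q_0$, choose Cartesian coordinates with $q_0$ at the origin, and for $\ph\in N_\Macro$ (the set in \eqref{eq:macroelement.condition.3d}) write $\ph|_{K_i}=a_i+b_i x+c_i y+d_i z$. Recall that ``$\Macro$ regular'' means exactly that every such $\ph$ is constant on $\Macro$.

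\emph{Sufficiency} ($\Macro$ \xUnstructured $\Rightarrow$ regular). First I would test \eqref{eq:macroelement.condition.3d} with $\wh=(0,v_h^1,0)$ and with $\wh=(0,0,v_h^2)$, taking $v_h^1$ (resp.\ $v_h^2$) to be the bubble supported on a single tetrahedron $K_i$ --- which is admissible precisely because the second and third velocity components carry bubbles. Integration by parts on $K_i$ gives $c_i\int_{K_i}v_h^1=0$ and $d_i\int_{K_i}v_h^2=0$, hence $c_i=d_i=0$ for all $i$, so $\partial_y\ph=\partial_z\ph=0$ and $\ph|_{K_i}=a_i+b_ix$. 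Continuity of $\ph$ at $q_0=0$ then forces $a_1=\dots=a_{\Nvertex}=:a$. Now any common triangular face $F$ of two tetrahedra $K_i,K_j$ contains $q_0$, and imposing $a+b_ix=a+b_jx$ on $F$ yields $b_i=b_j$ \emph{unless} $F$ lies in the plane $x=q_0^x$, i.e.\ unless both outer vertices of $F$ have $x$-coordinate $q_0^x$. The combinatorial core of the argument is the claim that $\Macro$ is \xUnstructured if and only if the dual graph of $\{K_1,\dots,K_{\Nvertex}\}$ (nodes: tetrahedra, arcs: shared faces) remains connected after the arcs coming from faces contained in $\{x=q_0^x\}$ are removed: the ``only if'' direction is immediate from Definition~\ref{def:macro.splitting}, while for the converse one uses that around any outer vertex $q_k$ with $q_k^x\neq q_0^x$ all incident tetrahedra are pairwise linked by retained arcs, so tetrahedra in two distinct components can meet only inside $\{x=q_0^x\}$ and therefore exhibit a splitting hyperplane. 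Granting connectivity, all $b_i$ coincide, $\ph=a+bx$ on $\Macro$, and a final test with $\wh=(\uh,0,0)$, $\uh$ the piecewise linear hat function equal to $1$ at $q_0$ and $0$ at every other vertex, gives $0=\int_\Macro(\partial_x\uh)\ph=-b\int_\Macro\uh$ with $\int_\Macro\uh>0$; hence $b=0$ and $\ph$ is constant.

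\emph{Necessity} ($\Macro$ \xStructured $\Rightarrow$ singular). Since every $K_i$ contains $q_0$, any hyperplane $x=C$ splitting $\Macro$ must pass through $q_0$, so we may write $\Macro=M_1\cup M_2$ with each $M_\ell$ a union of tetrahedra and $M_1\cap M_2\subset\{x=q_0^x\}$. Mimicking the 2D counterexample, fix $c\neq 0$ and set (with $q_0$ at the origin) $\ph=a+\frac{c}{|M_1|}x$ on $M_1$ and $\ph=a-\frac{c}{|M_2|}x$ on $M_2$; this is $\P1$ on each tetrahedron, continuous across $M_1\cap M_2\subset\{x=0\}$ (both expressions equal $a$ there) and non-constant, hence a legitimate element of $P_\Macro$. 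Since $\partial_y\ph=\partial_z\ph=0$, the bubble components of any $\wh=(\uh,v_h^1,v_h^2)\in\WMacro$ contribute nothing, and writing $\uh$ via its nodal value $\alpha=\uh(q_0)$ and using the $\P1$-exact nodal quadrature $\int_K f=\frac{|K|}{4}\sum_{j=1}^4 f(q_j^K)$ on tetrahedra,
\begin{equation*}
  \int_\Macro(\div\wh)\,\ph
  =-\int_\Macro\uh\,\partial_x\ph
  =-\frac{c}{|M_1|}\int_{M_1}\uh+\frac{c}{|M_2|}\int_{M_2}\uh
  =-\frac{c\alpha}{4}+\frac{c\alpha}{4}=0 .
\end{equation*}
So $\ph\in N_\Macro$ is non-constant and \eqref{eq:macroelement.condition.3d} fails; the cases \FEfourSpacesP{1,b}1{1,b}{1} and \FEfourSpacesP{1,b}{1,b}1{1} follow by permuting the coordinate roles.

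The step I expect to be the main obstacle is the combinatorial equivalence in the sufficiency part: in 2D the triangles around $q_0$ form a cycle, so ``at most one outer vertex horizontally aligned with $q_0$'' immediately yields connectedness, whereas in 3D one must argue on the dual graph of the star of $q_0$ and check that deleting only the faces lying in $\{x=q_0^x\}$ cannot disconnect it unless $\Macro$ genuinely splits along that plane --- in particular one has to rule out, or harmlessly absorb, the possibility that tetrahedra in different components still share edges or vertices away from the plane.
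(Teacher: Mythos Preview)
Your proof is correct and follows essentially the same route as the paper's: bubble tests kill the $y$- and $z$-derivatives, continuity at $q_0$ equalizes the constants, continuity across faces not contained in $\{x=q_0^x\}$ propagates the $b_i$, a final hat-function test forces $b=0$, and the same piecewise-linear counterexample handles the structured case. The one place you are actually more careful than the paper is the connectivity step --- the paper simply asserts that if $\Macro$ cannot be split by $\{x=0\}$ then any two tetrahedra are joined by a chain of faces not lying in $\{x=0\}$, whereas you correctly identify this as the nontrivial point and sketch the dual-graph argument (your observation that two tetrahedra of the star sharing a vertex $q_k$ automatically share the edge $q_0q_k$, hence a face containing it, is exactly what is needed to close the gap you flag at the end).
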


\begin{proof}\mbox{}
  
  \par\noindent\fbox{$\Leftarrow$}
  Let us define $\ph\in N_\Macro$ with $\ph|_{K_i} = a_i + b_i x + c_i
  y + d_i z$, for each $K_i\subset \Macro$. Let $q_r$,
  $r=1,...\Nvertex$, be the vertices on $\partial \Macro$, let $q_0$
  the only vertex in the interior of $\Macro$ and let $r_i$,
  $i=1,...,n_K$ be the bubble degrees of freedom (where $n_K$ is the
  number of tetrahedrons in $\Macro$).

  If we choose $\uh=0$, $v_h^2=0$ and $v_h^1$ is equal to $1$ in the
  barycentre of a tetrahedron $K_i$ while is $0$ in all other degrees
  of freedom, we have
  $$
  0=\int_\Macro \dy v_h^1 \; \ph = -\int_\Macro v_h^1 \; \dy \ph = -c_i \int_{K_i} v_h^1.
  $$
  Then $c_i=0$ for all $i=1,...,n_K$. Interchanging the roles of
  $v_h^1$ and $v_h^2$, we also deduce $d_i=0$ for all
  $i=1,...,n_K$. Therefore, $\ph|_{K_i}=a_i+b_ix$.

  Now, supposing that $q_0=(0,0,0)$ and applying the continuity of
  $\ph$ in $q_0$, we conclude that exists $a\in\Rset$ such that
  $a_1=...=a_{n_K}=a$.  Then, if $T_{ij}$ denotes the (triangular)
  face common to two tetrahedrons, $K_i$ and $K_j$, the continuity of
  $\ph$ in $T_{ij}$ implies
  \begin{equation*}
    \left\{
  \begin{aligned}
    x_1(b_i-b_j)&=0, \\
    x_2(b_i-b_j)&=0,
  \end{aligned}
  \right.
\end{equation*}
  where $x_1$ and $x_2$ are the respective $x$ coordinates of
  the two vertices which, besides $q_0$, define $T_{ij}$. 

  For every $i$ and $j$ such that the face $T_{ij}$ is not included in
  the plane $x=0$, then $x_1$ or $x_2$ is not zero, hence $b_i=b_j$.
  On the other hand, by hypothesis, there is no plane $x=C$ whose
  intersection with $\Macro$ is formed by faces of tetrahedrons. Then
  any two tetrahedrons $K_i$ and $K_j$ in $\Macro$ can be connected by
  a chain of faces satisfying the property of not inclusion in the
  plane $x=0$, hence $b_i=b_j=b$ for all $i,j=1,...,n_K$, for some
  $b\in\Rset$, and $\ph=a+bx$.

  It remains to prove $b=0$. Let us choose $v_h^1=v_h^2=0$ and
  $\uh$ equal to $1$ in $q_0$ and $0$ in all other
  vertices. Then
  \begin{equation*}
    0 = \int_\Macro \dx\uh \; \ph =  -\int_\Macro \uh \; \dx\ph =
    -b\int_\Macro \uh,
  \end{equation*}
  hence $b=0$, what means $p$ is constant on $\Macro$ and the macro-element
  condition~(\ref{eq:macroelement.condition.3d}) holds.

  \medskip\par\noindent\fbox{$\Rightarrow$} For the reciprocal
  implication, we use a reduction to absurdity argument. Let
  $\Macro\in\MhOneVertex$ with $q_0=(0,0,0)$ its only interior vertex
  and let us suppose that $\Macro$ can be split by the plane $x=0$.
  
  Let $\Macro^+=\{(x,y,z)\in \Macro \st x>0\}$ and $\Macro^-=\{(x,y,z)\in \Macro \st
  x<0\}$ and, for any parameters $a$ and $b\in\Rset$, $b\neq 0$, let
  us consider the family of (not constant) pressures of $\PMacro$:
  $$ 
  \ph(x,y,z)=
  \begin{cases}
    \displaystyle a-\frac b{|\Macro^-|} x & \mbox{if $x\le0$}, \\
    \displaystyle a+\frac b{|\Macro^+|} x & \mbox{if $x>0$}.
  \end{cases}
  $$
  Reasoning like in the proof of Theorem~\ref{theorem:P1bP1P1:macroelem}, it is not
  difficult to show that any $\ph$ in
  this family satisfies $\int_\Macro \ph \div\wh\; =0$ for all $\wh$ in $\WMacro$,
  hence  \FEfourSpacesP{1}{1,b}{1,b}{1} is not regular  in $\Macro$.
\end{proof}

\begin{remark}
  Theorem~\ref{theorem:P1P1bP1b-P1:macroelem.3D} can be applied (owing
  to Theorem~\ref{theorem:stenberg.sufficient.condition} and
  Lemma~\ref{lemma:macroelements.with.one.interior.vertex}) in order
  to provide a sufficient condition for the stability of
  \FEfourSpacesP{1}{1,b}{1,b}{1} in uniformly unstructured $3D$
  meshes. Indeed, a global result similar to
  Theorem~\ref{theorem:P1bP1P1-mesh-stability} can be enounced,
  including also the cases \FEfourSpacesP{1,b}{1}{1,b}{1} and
  \FEfourSpacesP{1,b}{1,b}{1}{1}. 

  Finally, as commented in
  Remark~\ref{rk:mesh-generators-and-error-estimates}, our numerical
  experiments suggest stability in usual unstructured meshes. In any
  case, Algorithm~\ref{alg:x-unstruct} can be generalized and applied
  for ensure stability. Error estimates (\ref{error-estim}) can
  be extended to this $3D$ case.
\end{remark}


\subsubsection*{Second possibility: \FEfourSpacesP11{1,b}1}

\renewcommand{\UU}{\mathbf{U}}%
\renewcommand{\Uh}{\UU_h}%
\renewcommand{\uu}{\mathbf{u}}%
\renewcommand{\uh}{\uu_{h}}%
\renewcommand{\VV}{V}%
\renewcommand{\Vh}{V_h}%
\renewcommand{\vv}{v}%
\renewcommand{\vh}{v_{h}}%
\renewcommand{\divx}[1]{\nabla_\xx\cdot{}#1}
\renewcommand{\gradx}[1]{\nabla_\xx #1}

Next, two components of the velocity field are being left without
enrichment, while the remaining component is enriched with bubbles.
To simplify our exposition, we focus on the case where bubbles are
added ``in the vertical direction'', but not ``in horizontal
directions''. Namely additional bubbles are inserted in the $z$
component of velocity, but not in $x$ or $y$ components, obtaining
\FEfourSpacesP11{1,b}1 FE (the other cases \FEfourSpacesP{1,b}111 and
\FEfourSpacesP1{1,b}11 are symmetric).

Generalizing the 2D case, the idea that we are going to formalize is:
\FEfourSpacesP11{1,b}1 is stable in unstructured in directions $(x,y)$ macro-elements.
 Namely, macro-elements which cannot
be split by planes (more specifically semiplanes, see
Theorem~\ref{theorem:P1P1P1b-P1:macroelem.3D}) with equations
$ax+by+c=0$ (orthogonal to the horizontal plane $z=0$). That is, the
key is: \textit{vertical semiplanes}.

Let $\Th$ be a $3D$ mesh which is composed of tetrahedrons and
satisfies Assumption~\ref{assumption:1}. Let us consider
$\Wh=\Uh\times\Vh$, were $\Uh$ is the product of two $\P{1}$
(continuous) spaces, $U_1$ and $U_2$, while $\Vh$ is a $\P{1,b}$
(continuous) space.  Again, the macro-element regularity
condition~(\ref{eq:macroelement.condition.3d}) is sufficient for the
regularity of \FEfourSpacesP11{1,b}1 in any macro-element
$\Macro\in\MhOneVertex$ where, in this case, $\WMacro$ is the space of
functions $\wh=(u_h^1,u_h^2,\vh)\in H_0^1(\Macro)^3\cap C^0(\Macro)^3$
such that $u_h^1$ and $u_h^2$ are $\P{1}$ while $\vh$ is $\P{1,b}$.
The following geometrical results are going to be useful:
\begin{lemma}
  \label{lemma:vertical.faces}
  Let $\Pi$ a plane containing a triangle, $T$, whose vertices are
  $q_0=(x_0,y_0,z_0)$, $q_1=(x_1,y_1,z_1)$ and
  $q_2=(x_2,y_2,z_2)$. Then $\Pi$ is orthogonal to the plane $z=0$ if
  and only if
  \begin{equation}
    \label{eq:vertical.faces}
    \begin{vmatrix}
      x_1-x_0 & y_1-y_0  \\ x_2-x_0 & y_2-y_0
    \end{vmatrix}=0.
  \end{equation}
\end{lemma}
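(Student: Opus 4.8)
The statement to prove is a purely linear-algebraic/geometric characterization of when a plane through a triangle is orthogonal to the horizontal plane $z=0$, so I would give a short direct computation. First I would recall that a plane $\Pi$ is orthogonal to the plane $z=0$ precisely when its normal vector $\mathbf{n}$ is horizontal, i.e.\ the $z$-component of $\mathbf{n}$ vanishes. For the plane containing the triangle $T$ with vertices $q_0,q_1,q_2$, a normal vector is given by the cross product of the two edge vectors $q_1-q_0$ and $q_2-q_0$. Its third component is exactly
\begin{equation*}
  \big((q_1-q_0)\times(q_2-q_0)\big)_z
  =
  \begin{vmatrix}
    x_1-x_0 & y_1-y_0 \\ x_2-x_0 & y_2-y_0
  \end{vmatrix}.
\end{equation*}
So the computation reduces to this observation plus the fact that, since $q_0,q_1,q_2$ are the vertices of a genuine (nondegenerate) triangle, the normal $\mathbf{n}$ is nonzero; hence $\Pi$ is vertical if and only if the displayed $2\times 2$ determinant is zero.

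\medskip

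More explicitly, for the ``if'' direction I would argue: if the determinant vanishes, then $\mathbf{n}=(q_1-q_0)\times(q_2-q_0)$ has zero $z$-component and is nonzero (nondegeneracy of $T$), so $\mathbf{n}$ is a nonzero horizontal vector normal to $\Pi$; any plane with a horizontal normal is orthogonal to $z=0$ (equivalently, contains the vertical direction $(0,0,1)$, since $\mathbf{n}\cdot(0,0,1)=0$). For the ``only if'' direction: if $\Pi$ is orthogonal to $z=0$ then the vertical vector $(0,0,1)$ is parallel to $\Pi$, hence orthogonal to $\mathbf{n}$, so $\mathbf{n}_z=0$, which is the vanishing of the determinant. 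An equivalent, perhaps cleaner, formulation is via the projection map $\pi(x,y,z)=(x,y)$: $\Pi$ is vertical iff $\pi(\Pi)$ is a line (not all of $\Rset^2$), iff the projected edge vectors $\pi(q_1-q_0)=(x_1-x_0,y_1-y_0)$ and $\pi(q_2-q_0)=(x_2-x_0,y_2-y_0)$ are linearly dependent, which is precisely the stated determinant condition.

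\medskip

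\textbf{Main obstacle.} There is essentially no real obstacle here; the only point requiring a word of care is the nondegeneracy hypothesis: the equivalence genuinely uses that $T$ is a nondegenerate triangle (so that $\{q_1-q_0,q_2-q_0\}$ is linearly independent and the normal is nonzero), which is guaranteed because $T$ is an element (or a face of an element) of a shape-regular triangulation. Without that, a degenerate ``triangle'' could have the determinant vanish while lying in many planes. I would state this assumption explicitly and then present the two-line cross-product (or projection) argument; the whole proof is a couple of sentences.
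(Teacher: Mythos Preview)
Your argument is correct and is essentially the same as the paper's: the paper observes that $\Pi\perp\{z=0\}$ iff $(0,0,1)$ lies in the span of $v_1=q_1-q_0$ and $v_2=q_2-q_0$, expresses this as the vanishing of the $3\times3$ determinant with rows $(0,0,1),v_1,v_2$, and expands to the stated $2\times2$ determinant---which is exactly your cross-product computation via the scalar triple product $(0,0,1)\cdot(v_1\times v_2)$. Your explicit remark on the nondegeneracy of $T$ is a welcome clarification that the paper leaves implicit.
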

\begin{proof}
  If $\Pi$ is a plane containing the triangle $T$, then it is defined
  by $q_0$ and the vectors $v_1=(x_1-x_0,y_1-y_0,z_1-z_0)$ and
  $v_1=(x_2-x_0,y_2-y_0,z_2-z_0)$. Then, $\Pi$ is orthogonal to the plane $z=0$
  if and only if the normal vector, $(0,0,1)$, to the plane $z=0$ is a
  linear combination of $v_1$ and $v_2$, that is
  $$
  \begin{vmatrix}
    0 & 0 & 1 \\
    x_1-x_0 & y_1-y_0 & z_1-z_0 \\
    x_1-x_0 & y_2-y_0 & z_2-z_0
  \end{vmatrix}=0,
  $$
  and this equation is equivalent to~(\ref{eq:vertical.faces}).
\end{proof}

\newcommand{\macroChain}[1]{{#1}'}

\begin{lemma}
  \label{lemma.chain.of.faces}
  Let $\Macro\in\MhOneVertex$. Let $\macroChain{M}=K_1\cup...\cup K_n$, where $K_1$,
  ..., $K_n$ are tetrahedrons contained in $\Macro$ which are connected by a chain of
  (triangular) faces which are not orthogonal to the plane $z=0$
  (namely, not vertical faces). If $\ph\in N_\Macro$, there exists
  $a,\macroChain{b},\macroChain{c}\in\Rset$ (with a independent on $\macroChain{M}$) such that
  $p_h|_{\macroChain{M}} = a + \macroChain{b}x + \macroChain{c}y$.
\end{lemma}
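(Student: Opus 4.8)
The plan is to mimic the first half of the proof of Theorem~\ref{theorem:P1P1bP1b-P1:macroelem.3D}, now applied componentwise only to the vertical variable, and then to propagate information along the chain $\macroChain{M}$ using the vertical-face criterion of Lemma~\ref{lemma:vertical.faces}. First I would write $\ph|_{K_i} = a_i + b_i x + c_i y + d_i z$ on each tetrahedron $K_i\subset\Macro$. Testing the regularity identity $\int_\Macro \ph\,\div\wh = 0$ against $\wh = (0,0,\vh)$, where $\vh\in\VMacro$ is the bubble function supported on a single $K_i$, gives $0 = \int_\Macro \dz\vh\;\ph = -\int_{K_i}\vh\,\dz\ph = -d_i\int_{K_i}\vh$, hence $d_i = 0$ for every $i$; so $\ph|_{K_i} = a_i + b_i x + c_i y$ on all of $\Macro$ (this step does not even need the chain hypothesis). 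Next, putting $q_0 = (0,0,0)$ (without loss of generality) and using continuity of $\ph$ at the interior vertex $q_0$ yields $a_1 = \cdots = a_{n_K} = a$, with this common constant $a$ manifestly independent of which subchain $\macroChain{M}$ we look at — this is the source of the ``$a$ independent of $\macroChain{M}$'' claim.

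The core of the argument is the propagation of $b_i$ and $c_i$ along $\macroChain{M}$. If $K_i$ and $K_j$ are adjacent through a common triangular face $T_{ij}$ that is \emph{not} vertical, then by Lemma~\ref{lemma:vertical.faces} the $2\times 2$ determinant formed by the horizontal projections of the two edges of $T_{ij}$ emanating from one of its vertices is nonzero; combined with continuity of $\ph$ across $T_{ij}$ (which, after using $a_i = a_j$ and $d_i = d_j = 0$, forces the affine function $(b_i - b_j)x + (c_i - c_j)y$ to vanish on $T_{ij}$), this nondegeneracy of the horizontal footprint of $T_{ij}$ forces $b_i = b_j$ and $c_i = c_j$. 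Here I would be careful about the case $q_0 \in T_{ij}$: then one should use the two vertices of $T_{ij}$ other than $q_0$, so that the relevant linear system in $(b_i-b_j, c_i-c_j)$ has the determinant~\eqref{eq:vertical.faces} as its coefficient determinant, and non-verticality makes it invertible. Iterating along the chain of non-vertical faces connecting $K_1,\dots,K_n$ gives $b_1 = \cdots = b_n =: \macroChain{b}$ and $c_1 = \cdots = c_n =: \macroChain{c}$, so $\ph|_{\macroChain{M}} = a + \macroChain{b}x + \macroChain{c}y$ as claimed, with $a$ the global constant obtained at $q_0$ and $\macroChain{b},\macroChain{c}$ depending only on the chain.

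The main obstacle I anticipate is the bookkeeping at faces incident to $q_0$: when $T_{ij}$ contains $q_0$, the naive use of ``the two edge vectors from a vertex of $T_{ij}$'' must be set up so that the resulting determinant is exactly the one in Lemma~\ref{lemma:vertical.faces}, and one must check that non-verticality of $T_{ij}$ (as a face of the mesh) is precisely the hypothesis used to connect tetrahedra in $\macroChain{M}$, so that this case causes no loss. A secondary point is verifying that the continuity condition of $\ph$ across $T_{ij}$ indeed reduces — after $a_i = a_j$, $d_i = d_j = 0$ — to the two scalar equations with coefficient matrix given by the horizontal edge components; this is the 3D analogue of equation~\eqref{eq:linear.equations.c_i} in the proof of Theorem~\ref{theorem:P1bP1P1:macroelem} and is a short linear-algebra computation rather than a genuine difficulty.
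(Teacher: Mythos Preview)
Your proposal is correct and follows essentially the same approach as the paper: eliminate the $d_i$ via the vertical bubble, fix the common constant $a$ by continuity at $q_0$, and then propagate $b_i,c_i$ across each non-vertical face using the $2\times2$ system whose determinant is precisely the one in Lemma~\ref{lemma:vertical.faces}. One small simplification: in a vertex-centered macro-element the interior vertex $q_0$ lies on \emph{every} interior face $T_{ij}$, so the ``case $q_0\in T_{ij}$'' you flag as a potential obstacle is in fact the only case, and the paper handles it directly by taking $q_0$ and the two other vertices of $T_{ij}$ as the three points for the continuity equations.
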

\begin{proof}
  Let $\ph\in N_\Macro$, with $p|_{K_i} = a_i + b_ix + c_iy + d_iz$ in each
  tetrahedron $K_i$. We can consider $v_h=1$ in the barycentre of $K_i$
  and $v_h=0$ in all other degrees of freedom, while $u_h^1=u_h^2=0$
  in $\Macro$. Similarly to the proof of
  Theorem~\ref{theorem:P1P1bP1b-P1:macroelem.3D}, we get $d_i=0$ for all
  $i=1,...,n_K$. Supposing, without loss of generality, that the common vertex in $\Macro$
  is $q_0=(0,0,0)$ and using the continuity of $\ph$ in $q_0$ we have
  $a_1=...=a_{n_K}=a$, for some $a\in\Rset$, then
  \begin{equation*}
    p|_{K_i} = a+b_ix+c_iy, \quad \forall i=1,...,n_K.
  \end{equation*}

  If $T_{ij}$ is the common face of two tetrahedrons $K_i$ and $K_j$,
  and $q_1=(x_1,y_1,z_1)$, $q_2=(x_2,y_2,z_2)$ are, besides $q_0$, its
  vertices, the continuity of $\ph$ in $T_{ij}$ is equivalent to
  \begin{equation*}
    \left\{
      \begin{aligned}
        x_1 \; (b_i-b_j) + y_1 \; (c_i-c_j)  &=0, \\
        x_2 \; (b_i-b_j) + y_2 \; (c_i-c_j) &=0.
      \end{aligned}
    \right.
  \end{equation*}
  If $T_{ij}$ is not vertical, from 
  Lemma~\ref{lemma:vertical.faces} (with $x_0=y_0=0$) 
  $$ \begin{vmatrix}
      x_1 & y_1  \\ x_2 & y_2
    \end{vmatrix}\not =0$$
    and then
  $$b_i-b_j=0 \quad\text{ and }\quad c_i-c_j=0. $$

  Finally, if the tetrahedrons $K_1,...,K_n$ can be connected through
  a chain of faces, $T_{1,2}, T_{2,3}$, ..., $T_{n-1,n}$, that are not
  orthogonal to the plane $z=0$, then $b_1=b_{2}=...=b_n=\macroChain{b}$ and
  $c_1=c_{2}=...=c_n=\macroChain{c}$, for some $\macroChain{b}$,
  $\macroChain{c}\in\Rset$.
\end{proof}

For the following result, we use the concept of splitting a
macro-element by a set of semi-planes. This splitting concept can be
defined in the sense of Definition~\ref{def:macro.splitting}.

\begin{theorem}
  Let $\Macro\in\MhOneVertex$ with one only interior vertex, $q_0$. Let $r_0$ be the
  vertical straight line through $q_0$ and let ${\cal F}$ be the
  family of the (vertical) semi-planes bounded by $r_0$.
  \begin{enumerate}
  \item If $\Macro$ cannot be split by semi-planes of ${\cal F}$, then
    \FEfourSpacesP11{1b}1 is regular in $\Macro$.
  \item If $\Macro$ can be split by only  two semi-planes of ${\cal F}$, then
    \FEfourSpacesP11{1b}1 is regular in $\Macro$ if and only if these two
    semi-planes are not aligned (i.e.~they do not form a vertical
    plane splitting $\Macro$).
  \item If $\Macro$ can be split by more than two semi-planes of ${\cal F}$, then \FEfourSpacesP11{1b}1 is \emph{not} regular in $\Macro$.
  \end{enumerate}
  \label{theorem:P1P1P1b-P1:macroelem.3D}
\end{theorem}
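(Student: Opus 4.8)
The plan is to reduce the macro-element condition~(\ref{eq:macroelement.condition.3d}) to a small finite-dimensional linear problem, as in the 2D argument of Theorem~\ref{theorem:P1bP1P1:macroelem}, and then to split into cases according to the arrangement of the vertical faces of $\Macro$ around $r_0$. First I would place $q_0$ at the origin and take $\ph\in N_\Macro$, $\ph|_K=a_K+b_Kx+c_Ky+d_Kz$ on each tetrahedron $K\subset\Macro$. Testing~(\ref{eq:macroelement.condition.3d}) with the bubble of each $K$ in the vertical ($z$-) slot of $\wh$ and integrating by parts forces $d_K=0$, so $\dz\ph=0$; continuity of $\ph$ at $q_0$ gives $a_K=a$ for all $K$; and, since $q_0$ is the only interior vertex of $\Macro$, the spaces $U_1$ and $U_2$ are one-dimensional, spanned by the \P1 hat function $\phi_{q_0}$ at $q_0$, so (using $\int_K\phi_{q_0}=|K|/4$) the only remaining non-trivial instances of~(\ref{eq:macroelement.condition.3d}) are
\[
  \sum_{K\subset\Macro}|K|\,b_K=0,\qquad \sum_{K\subset\Macro}|K|\,c_K=0 .
\]
Hence $N_\Macro$ consists exactly of the continuous, $z$-independent, piecewise-affine functions $\ph$ whose (piecewise constant) horizontal gradient has vanishing area-weighted mean, and $\FEfourSpacesP11{1b}1$ is regular in $\Macro$ precisely when this forces $b_K=c_K=0$.

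Next I would use the slab decomposition of Lemma~\ref{lemma.chain.of.faces}: two tetrahedra of $\Macro$ are equivalent if joined by a chain of non-vertical shared faces, and on each equivalence class (``slab'') $\macroChain{M}_k$, $k=1,\dots,m$, the horizontal gradient $\nabla_{xy}\ph$ is a constant $w_k\in\Rset^2$. Every interior face of $\Macro$ contains $q_0$ (the face of a tetrahedron opposite $q_0$ cannot be shared with another tetrahedron of $\Macro$), so each vertical face lies in a vertical plane through $r_0$ and projects in the $(x,y)$-plane onto a segment through the origin in a direction $n$ (Lemma~\ref{lemma:vertical.faces}); since $\ph$ is $z$-independent on each slab, continuity across such a face between slabs $k$ and $l$ is equivalent to $\langle w_k-w_l,n\rangle=0$. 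Moreover $\Macro$ is splittable by semi-planes of ${\cal F}$ iff $m\ge2$: any split partitions the tetrahedra into two families whose common faces are vertical, hence contained in finitely many members of ${\cal F}$, and conversely a slab versus its complement realizes such a split. When $m=1$ (equivalently, $\Macro$ is not splittable) we get $\ph=a+\langle w_1,(x,y)\rangle$ on all of $\Macro$, and the area-weighted-mean condition gives $|\Macro|\,w_1=0$, hence $\ph$ constant: this is statement~1.

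For the splittable cases I would work with the sectors around $r_0$ cut out by the splitting semi-planes. A local argument at $q_0$ shows at least two semi-planes are always needed---a single half-plane attached to $r_0$ does not disconnect a small ball around $q_0$, whereas two semi-planes (forming a vertical plane, or a ``book'') do---so a minimal split is of one of two types: (a) two \emph{aligned} semi-planes, whose union is a vertical plane splitting $\Macro$ into $\Macro^+\cup\Macro^-$; or (b) $r\ge2$ distinct, non-aligned semi-planes with cyclically ordered $(x,y)$-traces $n_1,\dots,n_r$, splitting $\Macro$ into sectors $S_1,\dots,S_r$ with $S_{j-1},S_j$ meeting along the semi-plane of trace $n_j$. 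In case (b) with $r\ge3$ (statement~3), and also in case (a) (the second alternative of statement~2), I construct a non-constant $\ph\in N_\Macro$ by taking $\ph$ affine on each sector with horizontal gradient $w_j$ (resp.\ $w^{\pm}$): continuity across the $j$-th interface reads $w_{j-1}-w_j=\mu_j\,n_j^{\perp}$, the closure relation $\sum_j\mu_j n_j^{\perp}=0$ is a rank-$\le 2$ linear system in $(\mu_1,\dots,\mu_r)$ whose solution space has dimension $\ge r-2\ge 1$ (in case (a), $r=2$, $n_1=-n_2$, and the single relation already leaves a one-parameter family), so a non-trivial choice exists, and the remaining requirement $\sum_j|S_j|\,w_j=0$ is met by adding a suitable constant vector to all the $w_j$, which leaves their nonzero differences untouched. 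Conversely, case (b) with $r=2$ and $n_1,n_2$ not aligned (the first alternative of statement~2) gives two perpendicularity relations with independent normals, so $w_1=w_2$; then $\nabla_{xy}\ph$ is globally constant and the area-weighted-mean condition makes it zero, so $\ph$ is constant and the combination is regular.

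The step I expect to be the main obstacle is the geometric bookkeeping of the splittable cases: reducing an arbitrary splitting of $\Macro$ to the ``aligned'' and ``$r$-sector'' normal forms (in particular that the sectors are unions of tetrahedra on which a candidate pressure may be taken affine, and that $r$ really equals the minimal number of semi-planes used), together with the local-separation argument at $q_0$ that pins the lower bound of two semi-planes and rules out one-semi-plane splits. Once this is settled, the remainder is the routine finite-dimensional linear algebra of the two displayed identities, mirroring the 2D proof of Theorem~\ref{theorem:P1bP1P1:macroelem} and the 3D proof of Theorem~\ref{theorem:P1P1bP1b-P1:macroelem.3D}.
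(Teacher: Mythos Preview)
Your proposal is correct and follows essentially the same route as the paper: bubble tests kill the $z$-dependence, continuity at $q_0$ fixes the constant term, the two hat-function tests yield the volume-weighted mean conditions~(\ref{eq:11b1.b_i})--(\ref{eq:11b1.c_i}), and Lemma~\ref{lemma.chain.of.faces} reduces everything to one affine piece per slab with a single scalar interface constraint across each vertical semi-plane. Your treatment of statement~3 (via the cyclic closure relation $\sum_j\mu_j n_j^\perp=0$ and then shifting all $w_j$ by a common vector to meet the mean condition) is more explicitly constructive than the paper's bare dimension count of at most $n+2$ equations in $2n$ unknowns, but the underlying linear algebra is the same.
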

\begin{proof} Let $\ph$ in $N_\Macro$ with $\ph|_{T_i}=a_i+b_ix+c_iy+d_iz$, $i=1,...n_K$. Without loss of
  generality, we can assume $q_0=(0,0,0)$. Then, by continuity in $q_0$,
  $a_1=...=a_n$.  Let us choose $u^2_h=v_h=0$ in $\Macro$ while $u^1_h$ is
  equal to $1$ in $q_0$ and $0$ in all other degrees of
  freedom. Using the  quadrature formula defined in the vertices
  of tetrahedrons (exact in \P1), the following condition is satisfied:
  \begin{equation}
    \label{eq:11b1.b_i}
    0 = \int_\Macro \dx u_h^1 \; \ph = -  \sum_{i=1}^{n_K} |K_i| \, b_i.
  \end{equation}
  Applying the same argument to $u^1_h=v_h=0$ while $u^2_h$ is
  equal to $1$ in $q_0$ and $0$ in all other degrees of freedom:
  \begin{equation}
    \label{eq:11b1.c_i}
    0 = \int_\Macro \dy u_h^2 \; \ph =  -\sum_{i=1}^{n_K} |K_i| \,c_i.
  \end{equation}
  
  \paragraph{\textit{1.}}
  Since $\Macro$ cannot be split by semi-planes of ${\cal F}$, then any two
  tetrahedrons of $\Macro$ can be connected by a chain of not vertical
  faces.    Applying Lemma~\ref{lemma.chain.of.faces} to $\macroChain{M}=\Macro$, one has that $\ph$ can be written as $\ph=a+bx+cy$. Then, taking
  into account~(\ref{eq:11b1.b_i}) and~(\ref{eq:11b1.c_i}), $b=0$ and
  $c=0$, hence $p$ is constant on $\Macro$ and the macro-element
  condition~(\ref{eq:macroelement.condition.3d}) holds.

  \paragraph{\textit{2.}} 
  If $\Macro$ can be split only by two semi-planes, $\Pi_1$ and $\Pi_2$ of
  ${\cal F}$ into two regions $\macroChain{M}_1$ and $\macroChain{M}_2$ then (according
  to Lemma~\ref{lemma.chain.of.faces}), there exist
  $b_1,c_1,b_2,c_2\in\Rset$ such that
  \begin{align*}
    \ph|_{\macroChain{M}_1u} &= a + b_1x + c_1y, \\
    \ph|_{\macroChain{M}_2} &= a + b_2x + c_2y.
  \end{align*}
  
  Let $q_0=(0,0,0)$, $q_1=(x_1,y_1,z_1)$ and $q_2=(x_2, y_2, z_1)$
  be the vertices of an interior vertical face $T\subset\Pi_1$. The
  continuity of $\ph$ in $T$ means:
  \begin{align}
    \label{eq:5}
    x_1 (b_1-b_2) + y_1 (c_1-c_2) &= 0,\\
    \label{eq:5b}
    x_2 (b_1-b_2) + y_2 (c_1-c_2) &= 0.
  \end{align}
  But, according to Lemma~\ref{lemma:vertical.faces}, 
  $$
  \begin{vmatrix}
    x_1 & y_1  \\ x_2 & y_2
  \end{vmatrix}=0,
  $$
  then~(\ref{eq:5}) and~(\ref{eq:5b}) are reduced to one only
  equation, e.g.~(\ref{eq:5b}).
  
  Similarly, if $q_0=(0,0,0)$, $\widetilde q_1=(\widetilde x_1,\widetilde
  y_1, \widetilde z_1)$ and $\widetilde q_2=(\widetilde x_2, \widetilde y_2,
  \widetilde z_1)$ are the vertices of an interior vertical face
  $\widetilde T\subset\Pi_2$, one only equation is obtained from the
  continuity of $\ph$ in $\widetilde T$:
  \begin{equation}
    \widetilde x_1 (b_1-b_2) + \widetilde y_2 (c_1-c_2) = 0.
    \label{eq:6}
  \end{equation}
  
   \medskip\par\noindent\fbox{$\Leftarrow$} 
  If the vertical semi-planes $\Pi_1$ and $\Pi_2$ are not aligned then
  the projections of $(x_1,y_1,z_1)\in\Pi_1$ and $(\widetilde
  x_1,\widetilde y_1,\widetilde z_1)\in\Pi_2$  in $z=0$ are not aligned, that is:
  $$ 
  \begin{vmatrix}
    x_1 & y_1  \\ \widetilde x_1 & \widetilde y_2
  \end{vmatrix}\neq 0.
  $$
  Hence, from~(\ref{eq:5b})--(\ref{eq:6}), $b_1=b_2$ and $c_1=c_2$.
  Using~(\ref{eq:11b1.b_i})--(\ref{eq:11b1.c_i}), we conclude
  $b_1=b_2=c_1=c_2=0$, and $\ph=a$ in $\Macro$. 

 \medskip\par\noindent\fbox{$\Rightarrow$} 
  If $\Pi_1$ and $\Pi_2$ are aligned, (\ref{eq:5b})
  and~(\ref{eq:6}) are reduced to one only equation which is not
  enough to conclude $b_1=b_2=c_1=c_2=0$. In fact, counterexamples of
  not constant pressures of $N_\Macro$  satisfying $b_1\neq
  b_2$ or $c_1\neq c_2$, can be obtained similarly to the counterexample given in   the proof of Theorem~\ref{theorem:P1P1bP1b-P1:macroelem.3D}.
  
  \paragraph{\textit{3.}}
  Let $\macroChain{M}_1,...,\macroChain{M}_n$ be regions resulting from splitting $\Macro$ by $n$
  semi-planes ($n>2$), $\Pi_1,...,\Pi_n$ of ${\cal F}$. Arguing as
  in the previous case one has that, for each $i=1,...,n$,
  $\ph|_{\macroChain{M}_i}=a+b_ix+c_iy$ and the continuity of $\ph$ in $\Pi_i$
  can be reduced to one only equation:
  \begin{equation*} 
    x_i (b_i-b_{i+1}) + y_i (c_i-c_{i+1}) = 0,
  \end{equation*}
  where $q_i=(x_i,y_i,z_i)\in\Pi_i$.
Therefore, we have at most $n$ independent equations with $2n$
  unknowns, $b_i$ and $c_i$. Since $n>2$, the
  equations~(\ref{eq:11b1.b_i}) and~(\ref{eq:11b1.c_i}) are not enough
  to conclude $b_i=c_i=0$ for all $i=1,...,n$, because we have at most
  $n+2$ independent equations but $2n$ unknowns (and $n+2<2n$ if
  $n>2$).
\end{proof}

\begin{remark}
  Using Theorem~\ref{theorem:stenberg.sufficient.condition} and
  Lemma~\ref{lemma:macroelements.with.one.interior.vertex}, the
  macro-element regularity result given in
  Theorem~\ref{theorem:P1P1P1b-P1:macroelem.3D} can be used as a
  sufficient condition assuring the stability of \FEfourSpacesP11{1b}1
  in $3D$ meshes that are uniformly \zUnstructured, in the sense of
  meshes which uniformly do not present a local structure like the one
  discussed in Theorem~\ref{theorem:P1P1P1b-P1:macroelem.3D}
  (splitting by vertical semi-planes). In practice, one can hope that
  the unstructured $3D$ meshes obtained with usual software tools
  present this property but, in any case, mesh may be tested and
    eventually slightly modified (via a generalization of
    Algorithm~\ref{alg:x-unstruct}) for  ensuring stability.
\end{remark}

\subsection{Numerical simulations}
\label{sec:numer-simulations}

\renewcommand{\UU}{U}%
\renewcommand{\Uh}{U_h}%
\renewcommand{\uu}{u}%
\renewcommand{\uh}{\uu_{h}}%
\renewcommand{\divx}{\partial_x}%
\renewcommand{\divuv}{\partial_x\uu + \partial_y\vv}
\renewcommand{\gradx}{\partial_x}%

Some computational simulations are provided, which agree with the
numerical analysis of previous sections.

\begin{test}[\FEthreeSpacesP{1,b}{1}{1} and the cavity test in a
  \xStructured and \yUnstructured mesh]
  \label{fig:bubble-x_struct-y_unstruct}
  \begin{figure}
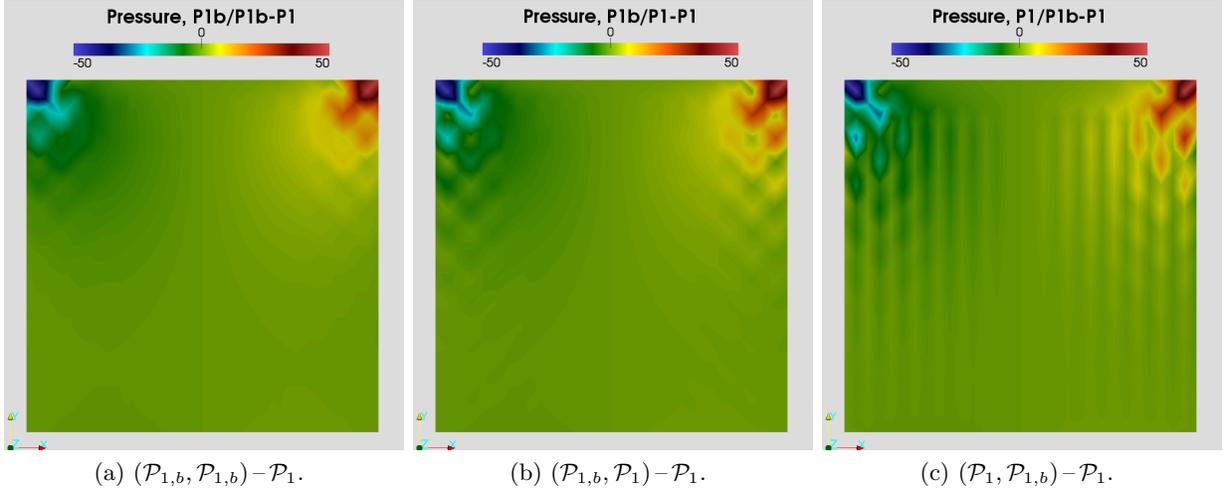

    \centering
    \begin{tabular}{@{}c@{}c@{}c@{}}
      \subfloat[\FEthreeSpacesP{1,b}{1,b}1.]{
        \label{fig:bb-1_x-struc_y_unstruct}
        \pgfimage[width=0.32\linewidth]{\imgdir/bb-1_x-struc_y_unstruct}}
      &
      \subfloat[\FEthreeSpacesP{1,b}{1}1.]{ 
        \label{fig:b1-1_x-struc_y_unstruct}
        \pgfimage[width=0.32\linewidth]{\imgdir/b1-1_x-struc_y_unstruct}}
      &
      \subfloat[\FEthreeSpacesP{1}{1,b}1.]{
        \label{fig:1b-1_x-struc_y_unstruct}
        \pgfimage[width=0.32\linewidth]{\imgdir/1b-1_x-struc_y_unstruct}}
    \end{tabular}
    \caption{Pressure contour plots in a
      \textit{\yUnstructured} mesh (lid-driven cavity test).}
  \end{figure}  

  As first numerical test we have chosen, in the unit square
  $\domain=(0,1)^2$, a \xStructured and \yUnstructured mesh like the
  one shown in Figure~\ref{fig:xStr-yUnstr-mesh}, where the stability
  of \FEthreeSpacesP{1,b}{1}{1} is guaranteed (see
  Remark~\ref{rk:xStr-yUnstr-mesh}).  We have approximated the
  solution of the Stokes
  problem~(\ref{eq:Stokes.a})--(\ref{eq:Stokes.c}) with Dirichlet
  conditions $\uu=1$ on the top boundary, $\uu=0$ on the rest of
  $\partial\domain$ and $\vv=0$ on all $\partial\domain$.

  For the practical implementation, that specific mesh is generated
  (with $15\times 15$ edges, i.e.~$h\simeq 0.666$) by a script
  programmed using the \textit{Python} language, which generates a
  mesh file using the format of the mesh generator
  Gmsh~\cite{Gmsh:09}. This file is imported by the Stokes solver,
  which has been programmed using the \textit{FEniCS} FE
  suite~\cite{FEniCS:LoggMardalEtAl2012a}, taking advantage of the
  facilities offered by this tool for setting the boundary conditions
  on this specific mesh.

  The program has been run for three choices:
  \FEthreeSpacesP{1,b}{1,b}{1}, \FEthreeSpacesP{1,b}{1}{1} and
  \FEthreeSpacesP{1}{1,b}{1}. As expected, the last case presents the
  worst behaviour for the pressure, suggesting the instability of
  \FEthreeSpacesP{1}{1,b}{1} in these \xStructured meshes
  (Figure~\ref{fig:1b-1_x-struc_y_unstruct}). On the other hand, the
  results for \FEthreeSpacesP{1}{1,b}{1} are similar to the
  mini-element  \FEthreeSpacesP{1,b}{1,b}{1} (Figures~\ref{fig:b1-1_x-struc_y_unstruct}
  and~\ref{fig:bb-1_x-struc_y_unstruct}, respectively), as expected
  from the stability of \FEthreeSpacesP{1,b}{1}1 in \yUnstructured
  meshes. 

  Notice that in this test (and in the next ones below) the
  Stokes mixed scheme
  (\ref{eq:discreteStokes.a})-(\ref{eq:discreteStokes.b}) has been
  slightly altered by introducing a standard pressure penalization in
  the divergence equation:
  \begin{equation*}
    \int_\Omega(\dx\uu+\dy\vv) \overline \pp + \varepsilon \int_\Omega\pp\,\overline\pp = 0 ,
    \quad \forall\,\pp\in\Ph,
  \end{equation*}
  where $\varepsilon$ is a small penalization parameter (in this
  example, we have selected $\varepsilon = 10^{-10})$. With this
  approximation, the condition $\int_\domain p=0$ is satisfied
  implicitly (we obtain $\int_\domain p=-5.12\cdot 10^{-8}$) and hence
  it is no longer necessary to include it in the definition of $\Ph$. Then,
  we can take $\Ph=\{\ph\in C^0(\overline\domain) \st \pp|_T\in
  \P1,\quad \forall T\in\Th \}$.
\end{test}

\begin{test}[\FEthreeSpacesP{1,b}{1}{1} and the cavity test]
  \label{test:b11-cavity-test}


  \begin{figure}
    \centering
    \begin{tabular}{@{}c@{}c@{}c@{}}
      \subfloat[Unstructured mesh.]{ 
        \label{fig:P1bP1P1-unstruct-mesh}
        \pgfimage[width=0.32\linewidth]{\imgdir/unstructured-mesh}}
      &
      \subfloat[Velocity field.]{
        \label{fig:b1-1-v-uns}
        \pgfimage[width=0.31\linewidth]{\imgdir/b1-1-v-uns}}
      \subfloat[Pressure.]{
        \label{fig:b1-1-p-uns}
        \pgfimage[width=0.31\linewidth]{\imgdir/b1-1-p-uns}}
    \end{tabular}
    \caption{\FEthreeSpacesP{1,b}{1}{1} elements in a
      \emph{unstructured} mesh.}
  \end{figure}  
  

  \begin{figure}
    \centering
    \begin{tabular}{@{}c@{}c@{}c@{}}
      \subfloat[Structured mesh.]{ 
        \label{fig:P1bP1P1-struct-mesh}
        \pgfimage[width=0.33\linewidth]{\imgdir/structured-mesh}}
      &
      \subfloat[Velocity field.]{
        \label{fig:b1-1-v-str}
        \pgfimage[width=0.31\linewidth]{\imgdir/b1-1-v-str}}
      &
      \subfloat[Pressure.]{
        \label{fig:b1-1-p-str}
        \pgfimage[width=0.31\linewidth]{\imgdir/b1-1-p-str}}
    \end{tabular}
    \caption{\FEthreeSpacesP{1,b}{1}{1} elements in a
      \emph{structured} mesh.}
  \end{figure}

  As the second numerical test, the 2D Stokes
  problem~(\ref{eq:Stokes.a})--(\ref{eq:Stokes.c}) has been solved in
  the squared domain $\domain=(0,1)^2\subset\Rset^2$, using
  \FEthreeSpacesP{1,b}11 FE in two different meshes: an
  unstructured mesh with 3,792 triangles (built with $20$ edges on
  each boundary segment, see Figure~\ref{fig:P1bP1P1-unstruct-mesh})
  and a structured mesh with the same number of boundary edges
  (Figure~\ref{fig:P1bP1P1-struct-mesh}). Then, in both cases
  $h\simeq 1/20$.

  We have fixed the Neumann condition
  $\nu\frac{\partial\uu}{\partial
    z}=1$ on the top boundary, while
  $\uu=0$ on the rest of $\partial\domain$ and $\vv=0$ on
  all $\partial\domain$.
  In this case, the implementation
  has been developed in \textit{FreeFem++}~\cite{FreeFem++}.

  In both simulations (unstructured and structured mesh) the velocity
  filed present similar behavior (Figures~\ref{fig:b1-1-v-uns}
  and~\ref{fig:b1-1-v-str}). But the pressure unknown, that is correct
  in the unstructured case (Figure~\ref{fig:b1-1-p-uns}), presents
  clear non-physical oscillations in the unstructured one
  (Figure~\ref{fig:b1-1-p-str}) which has a somewhat ``checker-board''
  structure similar to classical unstable approximations of Stokes
  problem~\cite{girault-raviart:86}.  Hence this experiment confirm
  that \FEthreeSpacesP{1,b}11 is stable in meshes which cannot be
  split by any horizontal segment, as predicted by the $2D$ numerical
  analysis developed in Section~\ref{sec:P1bP1-P1-2d}.
\end{test}

\begin{test}[\FEthreeSpacesP{1,b}{1}{1} and error orders]
  \label{test:b1-1-error-orders}
 
  \begin{figure}
    \centering
    \begin{tabular}{@{}c@{}c@{}}
      \subfloat
      {
        \pgfimage[width=0.49\textwidth]{\imgdir/b1-1-errors-loglog}
        \label{fig:b1-1-errors-loglog}
      }
      &
      \subfloat
       {
        \pgfimage[width=0.49\textwidth]{\imgdir/bb-1-errors-loglog}
        \label{fig:bb-1-errors-loglog}
      }
    \end{tabular}
    \caption{Velocity and pressure errors for \FEthreeSpacesP{1,b}11 (left) and
      \FEthreeSpacesP{1,b}{1,b}1 (right)}
  \end{figure}
  Thirdly, to give an estimate of the convergence orders,
  we have considered a 2D example in $\Omega=(0,1)^2$
  whose exact solution is: 
  \begin{align} 
    \uu(x,y) &= \cos(2\*\pi\*x)\*\sin(2\*\pi\*y) -  \sin(2\*\pi\*y),
    \quad
    \vv(x,y) = -\uu(y,x),
    \label{eq:exact-sol.velocity}
    \\
    \pp(x,y) &=  2\*\pi\*(\cos(2\*\pi\*y)-\cos(2\*\pi\*x)).
    \label{eq:exact-sol.pressure}
  \end{align}
  Note that $\dx\uu+\dy\vv=0$, $(\uu,\vv)|_{\partial\domain}=0$ and
  $\int_\domain \pp=0$.  The external force $\ff$, is calculated so that
  the momentum equation~(\ref{eq:Stokes.a}) hold.

  The problem has been solved in some unstructured
  meshes (recall that the theory predicts  instability of \FEthreeSpacesP{1,b}11
  in structured meshes), with $h\simeq 2^{-2}$, $2^{-3}$, ..., $2^{-8}$.
  The absolute errors obtained have been plotted in
  Figure~\ref{fig:b1-1-errors-loglog}, while
  Table~\ref{tab:b1-1-orders} shows the values of the convergence
  orders associated to those errors, which are calculated as
  $\log(e_{h_2}/e_{h_1})/\log(h_2/h_1)$, when $h_1<h_2$ travel through
  the different mesh sizes.

Taking into account error estimates (\ref{error-estim}), we can state the following conclusions:
  \begin{itemize}
  \item For both components of the velocity field, optimal convergence
    is obtained, that is order $O(h^2)$ in $L^2(\Omega)$ and order
    $O(h)$ in $H^1(\Omega)$.  For pressure, optimal order $O(h)$ in
    $L^2(\Omega)$ is obtained.
  \item The results are very similar (almost identical for the
    velocity components) to the classical mini-element,
    \FEthreeSpacesP{1,b}{1,b}1, whose errors are shown in
    Figure~\ref{fig:bb-1-errors-loglog}. It is significant to note
    that the combination \FEthreeSpacesP{1,b}11 preserves the error
    orders while requires a minor number of degrees of freedom than \FEthreeSpacesP{1,b}{1,b}1 and
      a smaller computational effort. On the other
    hand, to approach the Anisotropic Stokes  problem (see (AS) above), \FEthreeSpacesP{1,b}11 is stable in  most meshes, while \FEthreeSpacesP{1,b}{1,b}1 looses its stability
    (cf.~\CiteSecondPaper).
  \end{itemize}

  \begin{table}
    \centering
    \small
    \begin{tabular}{l@{\qquad}r@{\qquad}cccccc}
      \\ \toprule
      & $h$
      & $2^{-3}$ & $2^{-4}$ & $2^{-5}$ & $2^{-6}$ & $2^{-7}$  & $2^{-8}$
      \\ \otoprule
      \multirow{2}*{$\uu$} 
      &  $\|u-u_h\|_{L^2}$ &  
      1.099  & 1.709  & 1.952  &  1.876  & 1.988 & 2.019 
      \\ \cmidrule{2-8}
      & $\|u-u_h\|_{H^1_0}$ & 
      1.155  & 1.053  & 1.092  & 0.990  & 1.023  & 1.041 
      \\ \midrule
      \multirow{2}*{$\vv$} 
      & $\|v-v_h\|_{L^2}$ & 
      1.914  & 2.076  & 2.071  & 1.981  & 2.059  & 2.058
      \\ \cmidrule{2-8}
      & $\|v-v_h\|_{H^1_0}$ & 
      0.912  & 1.046  & 1.010  & 0.994  & 1.022  & 1.022
      \\ \midrule
      {$\pp$}
      & $\|p-p_h\|_{L^2}$ &  
      0.558  & 1.817  & 0.808  & 1.414  & 0.809 & 0.988
      \\ \bottomrule
    \end{tabular}
    \caption{Error orders for velocities and pressure
      (\FEthreeSpacesP{1,b}11 FE)}
    \label{tab:b1-1-orders}
  \end{table}

\end{test}

\begin{test}[Post-precessing Algorithm~\ref{alg:x-unstruct}]
  \label{test:algorithm}
  We have used Algorithm~\ref{alg:x-unstruct} (with $r=0.15$) for
  modifying a strongly structured mesh (defined by $16\times 16$
  intervals in the unit square, see Figure~\ref{fig:algo-mesh1}),
  arriving at the unstructured mesh given in Figure~\ref{fig:algo-mesh2}. Pressure for a
  standard cavity-driven test shows instabilities for
  \FEthreeSpacesP{1,b}11 in the original structured mesh
  (Figure~\ref{fig:algo-mesh1}) but not in the post-processed one
  (Figure~\ref{fig:algo-mesh2}).
    \begin{figure}
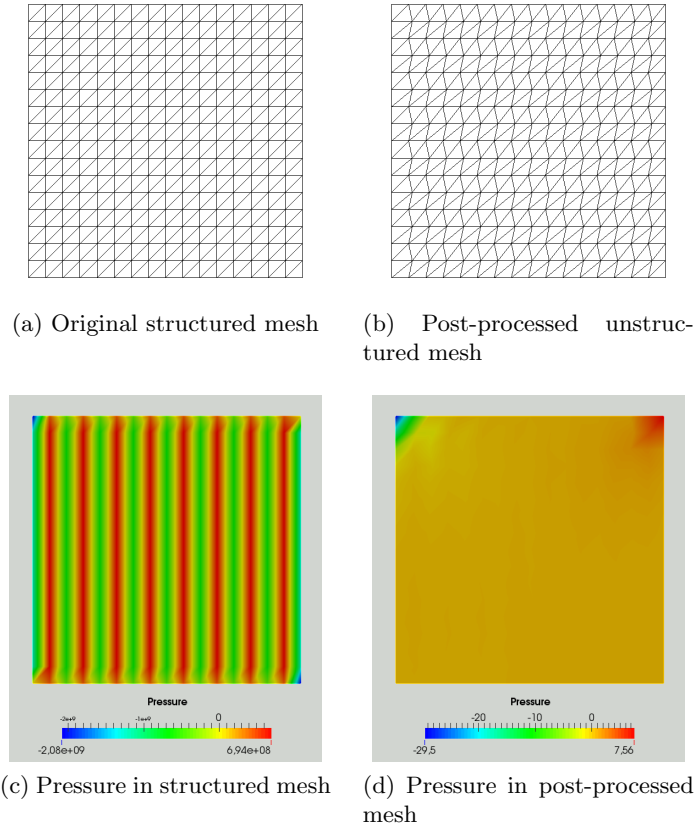

    \centering
    \begin{tabular}{ccc}
      \subfloat[Original structured mesh]{ 
        \pgfimage[width=0.25\linewidth]{\imgdir/algo-mesh-16x16-struc}
        \label{fig:algo-mesh1}}
      &
      \subfloat[Post-processed unstructured mesh]{ 
        \pgfimage[width=0.25\linewidth]{\imgdir/algo-mesh-16x16-unstr}
        \label{fig:algo-mesh2}}
      \\ 
      \subfloat[Pressure in structured mesh]{ 
        \pgfimage[width=0.25\linewidth]{\imgdir/algo-pressure-1b-1-struc}
        \label{fig:algo-pressure1}}
      &
      \subfloat[Pressure in post-processed mesh]{ 
        \pgfimage[width=0.25\linewidth]{\imgdir/algo-pressure-1b-1-unstr}
        \label{fig:algo-pressure2}}
    \end{tabular}
    \caption{Test of Algorithm~\ref{alg:x-unstruct} in a structured mesh.}
    \label{fig:P1bP1P1-strucured-cavity-test-3d}
  \end{figure}  
\end{test}

\begin{test}[3D domain, \FEfourSpacesP{1,b}{1,b}{1}{1} or
  \FEfourSpacesP{1,b}{1}{1}{1} and cavity test]
  \label{test:cavity-bb1-1-and-b11-1}

  \begin{figure}
    \centering
    \begin{tabular}{ccc}
      \subfloat[\FEfourSpacesP{1,b}{1,b}{1,b}1]{ 
        \pgfimage[width=0.3\linewidth]{\imgdir/bbb-1-str}
        \label{fig:P1bP1P1-strucured-cavity-test-3d-a}}
      &
      \subfloat[\FEfourSpacesP{1,b}{1,b}{1}1]{ 
        \pgfimage[width=0.3\linewidth]{\imgdir/bb1-1-str}
        \label{fig:P1bP1P1-strucured-cavity-test-3d-b}}
      &
      \subfloat[\FEfourSpacesP{1,b}{1}{1}1]{ 
        \pgfimage[width=0.3\linewidth]{\imgdir/b11-1-str}
        \label{fig:P1bP1P1-strucured-cavity-test-3d-c}}
    \end{tabular}
    \caption{Comparison of bubble $3D$ FE in
      a \emph{structured mesh}.}
    \label{fig:P1bP1P1-strucured-cavity-test-3d}
  \end{figure}  

  \begin{figure}
    \centering
    \begin{tabular}{ccc}
      \subfloat[\FEfourSpacesP{1,b}{1,b}{1,b}1]{ 
        \pgfimage[width=0.3\linewidth]{\imgdir/bbb-1-uns}
        \label{fig:P1bP1P1-unstructured-cavity-test-3d-a}}
      &
      \subfloat[\FEfourSpacesP{1,b}{1,b}{1}1]{ 
        \pgfimage[width=0.3\linewidth]{\imgdir/bb1-1-uns}
        \label{fig:P1bP1P1-unstructured-cavity-test-3d-b}}
      &
      \subfloat[\FEfourSpacesP{1,b}{1}{1}1]{ 
        \pgfimage[width=0.3\linewidth]{\imgdir/b11-1-uns}
        \label{fig:P1bP1P1-unstructured-cavity-test-3d-c}}
    \end{tabular}
    \caption{Comparison of bubble $3D$ FE in
      an \emph{unstructured mesh}.}
    \label{fig:P1bP1P1-unstructured-cavity-test-3d}
  \end{figure}  

  In the $3D$ case, we have approximated the solution of the Stokes
  equations~(\ref{eq:Stokes.a})--(\ref{eq:Stokes.c}) in the cube domain  $\domain=(0,1)^3\subset\Rset^3$, using both:
  \begin{enumerate}
  \item A structured mesh, constructed by
    \texttt{FreeFem++}~\cite{FreeFem++} through the subdivision in
    three tetrahedrons of each ones of the cubes resulting of the
    division of $\domain$ into $32^3$ equal rectangular
    parallelepipeds.
  \item a unstructured mesh consisting of 190,968 tetrahedrons,
    constructed by the mesh generator \texttt{Gmsh}~\cite{Gmsh:09} and
    then imported to \texttt{FreeFem++}.
  \end{enumerate}

  The Dirichlet condition $u(x,y,z)=y(y-1)$, $v(x,y,z)=w(x,y,z)=0$ has
  been fixed on the top boundary, while $\ww=0$ has been fixed on the rest of
  $\partial\domain$ (where $\ww=(u,v,w)$ denotes the
   solution). The FE approximations
  \FEfourSpacesP{1,b}{1,b}{1,b}1, \FEfourSpacesP{1,b}{1,b}11 and
  \FEfourSpacesP{1,b}111 have been used in each one of the two meshes
  and the resulting surface plots of the approximated pressure, $\ph$,
  is shown in Figure~\ref{fig:P1bP1P1-strucured-cavity-test-3d}
  (structured mesh) and Figure~\ref{fig:P1bP1P1-unstructured-cavity-test-3d}
  (unstructured one).

  Owing to the choice of a color map that highlights the small
  absolute values of $\ph$, spurious oscillations of the pressure are
  evident for \FEfourSpacesP{1,b}{1,b}11 and \FEfourSpacesP{1,b}111 in
  the structured mesh
  (Figures~\ref{fig:P1bP1P1-strucured-cavity-test-3d-b}
  and~\ref{fig:P1bP1P1-strucured-cavity-test-3d-c}), where we have
  stated their instability. On the other hand,
  Figures~\ref{fig:P1bP1P1-unstructured-cavity-test-3d-b}
  and~\ref{fig:P1bP1P1-unstructured-cavity-test-3d-c} suggest a
    correct behaviour (excepting some small oscillations on
  the top of the domain) if the mesh is unstructured, confirming
  theoretical results in Section~\ref{sec:P1bP1P1-3D}.
\end{test}

\begin{test}[$3D$ domain, enriching with bubble in the correct direction]
  In this test it has been considered a mesh $\Th$ of the cubic $\Omega=(0,1)^3$, which presents a
  clear structure only in the $z$-direction. Namely, \Th is
  \zStructured while \xUnstructured and \yUnstructured.

  In concrete, we have considered at the top of the domain, $S$, an
  unstructured triangulation ${\Th}_2$ of $S$ (defined with 64
  sub-intervals on $\partial S$, i.e.~$h\simeq 0.015$). Then a $3D$ 
  mesh, $\Th$, has been constructed, defining it from the extension of
  ${\Th}_2$ along $16$ layers uniformly distributed in the $z$-direction.

  We have programed the lid driven cavity test (as detailed in
  previous tests) for \FEfourSpacesP{1,b}{1,b}{1,b}1,
  \FEfourSpacesP{1}{1}{1,b}1 and \FEfourSpacesP{1,b}111 FE
  combinations. In the resulting graphics for pressure, shown in
  Figure~\ref{fig:z-structured-cavity-test-3d-bis}, it can be seen
  that the two first cases
  (Figures~\ref{fig:z-structured-cavity-test-3d-bis-a}
  and~\ref{fig:z-structured-cavity-test-3d-bis-b}) present a
  similar correct qualitative behaviour, while the third one
  (Figure~\ref{fig:z-structured-cavity-test-3d-bis-c}) presents
  some non physical oscillations (which, in this graphics, are
  amplified by the selection of the color map).

  This behaviour agree with the theory developed in
  Section~\ref{sec:P1bP1P1-3D}, which suggest the stability of
  \FEfourSpacesP11{1,b}1. Indeed, for this mesh, we can hope that the
  macro-elements $\Macro\in\MhOneVertex$, are \xUnstructured and
  \yUnstructured, hence adding bubble functions to the first
  components of the velocity field is not mandatory. But these
  macro-elements are \zStructured and then, for obtaining regularity,
  it is necessary and sufficient adding bubble functions to the third
  component. Also due to this fact, we cannot hope stability for
  \FEfourSpacesP{1,b}111, (see
  Figure~\ref{fig:z-structured-cavity-test-3d-bis-c}).

  \begin{figure}
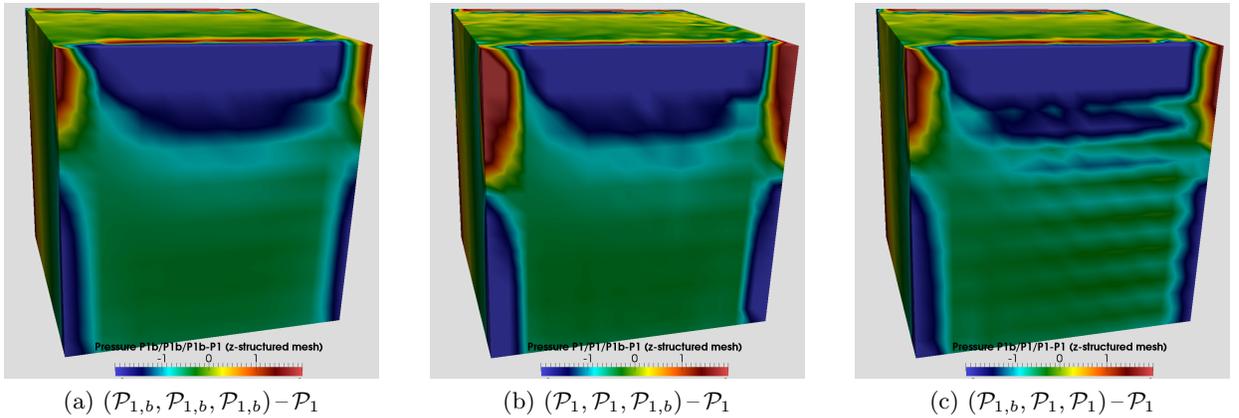

    \centering
    \begin{tabular}{ccc}
      \subfloat[\FEfourSpacesP{1,b}{1,b}{1,b}1]{ 
        \pgfimage[width=0.3\linewidth]{\imgdir/bbb-1-zstr}
        \label{fig:z-structured-cavity-test-3d-bis-a}}
      &
      \subfloat[\FEfourSpacesP{1}{1}{1,b}1]{ 
        \pgfimage[width=0.3\linewidth]{\imgdir/11b-1-zstr}
        \label{fig:z-structured-cavity-test-3d-bis-b}}
      &
      \subfloat[\FEfourSpacesP{1,b}{1}{1}1]{ 
        \pgfimage[width=0.3\linewidth]{\imgdir/b11-1-zstr}
        \label{fig:z-structured-cavity-test-3d-bis-c}}
    \end{tabular}
    \caption{Comparison of bubble $3D$ FE in
      a \emph{$z$--structured mesh}.}
    \label{fig:z-structured-cavity-test-3d-bis}
  \end{figure}  

\end{test}

\section{About stability of $\FEthreeSpacesP{2}{1}{1}$ FE}
\label{sec:P2P1P1}

\subsection{The 2D case}
\label{sec:P2P1-P1-2d}

In this section we introduce a $\P2$ continuous space for the
approximation of the horizontal component of velocity field,
\begin{equation*}
  \Uh = \{\uh \in H_0^1(\domain)\cap C^0(\domain) \st \uh|_T \in \P2,
  \ \forall T\in\Th
  \},
\end{equation*}
while $\Vh$ and $\Ph$ are approximated by $\P1$ continuous elements,
as defined in (\ref{eq:P1VhSpace}) and (\ref{eq:P1PhSpace}).

As in Section~\ref{sec:P1bP1-P1-2d}, let us suppose \Th satisfying
Assumption~\ref{assumption:1} 
and let \MhOneVertex be the a vertex-centered macro-element partitioning
of \Th, as in Figure~\ref{fig:macroelement:1}.  Let $\UM$ be the space
of functions in $H_0^1(M)\cap C^0(\overline M)$ which are $\P{2}$ in
each element $T\subset M$ and let $\VM$ and $\PM$ as in
Section~\ref{sec:P1bP1-P1-2d}.  In this framework,
Lemma~\ref{lemma:macroelements.with.one.interior.vertex} means that a
sufficient condition for the discrete inf-sup
condition~(\ref{eq:stokes-inf-sup}) is that $\MhOneVertex$ satisfies
(\ref{eq:macroelement.condition}), namely \FEthreeSpacesP{2}{1}{1} is
regular in every $\Macro\in\MhOneVertex$. The following result
characterizes this condition in function of the number of elements
contained in $\Macro$ and the number of vertices which are
horizontally aligned with $q_0$. See that this case is a little more
technical than the similar one
(Theorem~\ref{theorem:P1bP1P1:macroelem}) presented in
Section~\ref{sec:P1bP1-P1-2d} (see
Remark~\ref{rk:macro211-vs-macro-b11} for further details).

\begin{theorem}
  \label{theorem:P2P1P1:macroelem}
  Let $\Macro\in\MhOneVertex$ be a macro-element which can be written as
  union of $\Nvertex$ elements which share exactly one interior common
  vertex, $q_0$.
  \begin{enumerate}
  \item If $\Macro$ is \yStructured (i.e.~there are two vertices
    horizontally aligned to $q_0$), then \FEthreeSpacesP{2}{1}{1} is
    not regular in $\Macro$, namely
    condition~(\ref{eq:macroelement.condition}) does not hold.
  \item If there is just one vertex in $\Macro$ horizontally
    aligned with $q_0$, then \FEthreeSpacesP{2}{1}{1} is regular in $M$.
  \item If there is no vertex horizontally aligned with
    $q_0$:
    \begin{enumerate}
    \item If $\Nvertex$ is odd, then \FEthreeSpacesP{2}{1}{1} is regular in $\Macro$.
    \item If $\Nvertex$ is even, then \FEthreeSpacesP{2}{1}{1} is regular in $\Macro$
      if and only if the following algebraic condition does \emph{not}
      hold:
      \begin{equation}
        \label{eq:instabilityMacroCondition}
        \sum_{i=1}^{\Nvertex} (-1)^{i} \cot(\sigma_i)\left(\frac1{|T_i|}+ \frac1{|T_{i+1}|} \right) = 0,
      \end{equation}
    \end{enumerate}
    where $\cot(\sigma_i)=\cos(\sigma_i)/\sin(\sigma_i)$ is the
    cotangent function and $\sigma_i$ are the angles introduced before
    Definition~\ref{def:uniform-y-unstructured-parition}
  \end{enumerate}
\end{theorem}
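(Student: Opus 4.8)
The plan is to translate the regularity condition~(\ref{eq:macroelement.condition}) for $\FEthreeSpacesP{2}{1}{1}$ on a fixed $\Macro\in\MhOneVertex$ into an explicit linear system for the coefficients of the candidate pressures, and then analyse that system. I would put the interior vertex $q_0$ at the origin, label the $\Nvertex$ triangles cyclically so that $T_i$ has vertices $q_0,q_i,q_{i+1}$, and write $\ph|_{T_i}=a_i+b_ix+c_iy$ for $\ph\in\PMacro$. The starting observation is that $\VMacro$ is one-dimensional (the $\P1$ hat at $q_0$), while $\UMacro$ has dimension $\Nvertex+1$, a basis being the $\P2$ vertex function at $q_0$ together with the $\Nvertex$ edge-bubbles $\psi_j$ supported on the interior edge $[q_0,q_j]$ (hence on $T_{j-1}\cup T_j$); since the $\P2$ vertex function at $q_0$ has zero mean on every $T_i$, membership of $\ph$ in $N_\Macro$ is equivalent to a finite list of scalar conditions coming only from the $\psi_j$ and from the $q_0$-hat, and this is what I would use in both directions of each equivalence.

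Carrying out the computations: continuity of $\ph$ at $q_0$ gives $a_1=\dots=a_{\Nvertex}=:a$; testing $\int_\Macro(\dx\uh)\ph=0$ with $\uh=\psi_j$, integrating by parts on $T_{j-1}$ and $T_j$ (interior boundary terms cancel by opposite normals and continuity of $\ph$, and $\psi_j=0$ on $\partial\Macro$) and using $\int_T\psi_j=|T|/3$, yields $b_{j-1}|T_{j-1}|+b_j|T_j|=0$ for every $j$ (cyclically); and testing $\int_\Macro(\dy\vh)\ph=0$ with $\vh$ the $q_0$-hat via the $\P1$ mass-lumping quadrature gives $\sum_i c_i|T_i|=0$. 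Finally, continuity of $\ph$ at the boundary vertex $q_{i+1}$ (shared by $T_i$, $T_{i+1}$), after dividing by $|q_0q_{i+1}|$ and using that $[q_0,q_{i+1}]$ has direction $(\cos\sigma_i,\sin\sigma_i)$, reads $(b_i-b_{i+1})\cos\sigma_i+(c_i-c_{i+1})\sin\sigma_i=0$. So $N_\Macro$ is exactly the solution set of these relations with $a$ free, and $\FEthreeSpacesP{2}{1}{1}$ is regular in $\Macro$ precisely when they force $b_i=c_i=0$.

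The rest is linear algebra on these relations. The cyclic system $\{b_{j-1}|T_{j-1}|+b_j|T_j|=0\}$ has only the trivial solution when $\Nvertex$ is odd and otherwise the one-parameter family $b_i=(-1)^{i-1}\beta/|T_i|$. If two vertices are horizontally aligned with $q_0$ (alternative~1), the line $y=0$ splits $\Macro=\Macro^{+}\cup\Macro^{-}$ along two interior edges, and --- as in the $\Rightarrow$ part of Theorem~\ref{theorem:P1bP1P1:macroelem} --- the pressure $a-\frac{c}{|\Macro^{+}|}y$ on $\Macro^{+}$, $a+\frac{c}{|\Macro^{-}|}y$ on $\Macro^{-}$ lies in $N_\Macro$ for every $c$ (all its $b_i$ vanish, and $\sum_i c_i|T_i|=0$ holds by construction), so $\Macro$ is singular. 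If exactly one vertex $q_{k+1}$ is aligned (alternative~2), $\sin\sigma_k=0$, so continuity at $q_{k+1}$ forces $b_k=b_{k+1}$; with $b_k|T_k|+b_{k+1}|T_{k+1}|=0$ this gives $b_k=b_{k+1}=0$ and then $b_i\equiv0$ along the cyclic relations, after which the remaining continuity equations (all with $\sin\sigma_i\neq0$) make the $c_i$ equal and $\sum_i c_i|T_i|=0$ makes them zero, so $\Macro$ is regular. If no vertex is aligned (alternative~3) every $\sin\sigma_i\neq0$, so the $i$-th continuity equation is $c_{i+1}-c_i=-(b_i-b_{i+1})\cot\sigma_i$: for $\Nvertex$ odd one has $b_i\equiv0$ and concludes as in alternative~2; for $\Nvertex$ even, inserting $b_i=(-1)^{i-1}\beta/|T_i|$ turns it into $c_{i+1}-c_i=(-1)^{i-1}\beta\cot\sigma_i\big(1/|T_i|+1/|T_{i+1}|\big)$, and its consistency around the cycle is exactly $\beta\sum_i(-1)^{i-1}\cot\sigma_i(1/|T_i|+1/|T_{i+1}|)=0$. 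If the quantity in~(\ref{eq:instabilityMacroCondition}) is nonzero this forces $\beta=0$, hence $b_i\equiv0$ and $c_i\equiv0$ as before, so $\Macro$ is regular; if it vanishes, any $\beta\neq0$ yields a non-constant $\ph\in N_\Macro$ (the additive constant of the $c_i$ fixed by $\sum_i c_i|T_i|=0$, and $a$ free), so $\Macro$ is singular.

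The step I expect to be the real work is the $\psi_j$-computation: the clean identity $b_{j-1}|T_{j-1}|+b_j|T_j|=0$ needs careful integration-by-parts bookkeeping plus the exact value $\int_T\psi_j=|T|/3$, and the same identity, read together with the dimension count of $\UMacro\times\VMacro$, is what guarantees that the pressures produced in alternatives~1 and~3 (even case) annihilate \emph{all} of $\UMacro\times\VMacro$, not merely the particular test functions used above. The parity split this forces inside alternative~3 --- absent from Theorem~\ref{theorem:P1bP1P1:macroelem} --- is exactly the extra subtlety of the $\P2$ case, and it is where I would be most careful.
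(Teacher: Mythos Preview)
Your proposal is correct and follows essentially the same route as the paper: both reduce $N_\Macro$ to the continuity constraints plus the cyclic relations $b_{j-1}|T_{j-1}|+b_j|T_j|=0$ from the interior-edge $\P2$ functions and the single relation $\sum_i c_i|T_i|=0$ from the $q_0$-hat, then split into the same parity/alignment cases. The only cosmetic differences are that for alternative~1 you exhibit the counterexample pressure directly (whereas the paper re-derives it from the linear system) and a harmless sign slip in your intermediate formula for $c_{i+1}-c_i$, which does not affect the final condition~(\ref{eq:instabilityMacroCondition}).
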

\begin{remark}
  The case \FEthreeSpacesP121 is analogue and, for brevity, is not
  enounced here.
\end{remark}
\newcommand{\macroelemA}[2]{
  \def\yCoordA{#1}
  \def\yCoordB{#2}
  \begin{tikzpicture}
    \def\Nvertex{5}
    \def\ix{2.7}\def\iy{1.8}
    \def\vertices{
      \coordinate [label=-135:$q_0$] (q0) at (0,0);
      \coordinate [label=above: $q_1$] (q1) at (\ix*0.5,\iy);
      \coordinate [label=above: $q_2$] (q2) at (-\ix*0.5,\iy);
      \coordinate [label=135: $q_3$] (q3) at  (-\ix,\yCoordB*\iy);
      \coordinate [label=-45: $q_4$] (q4) at  (0.5*\ix,-\iy);
      \coordinate [label=45: $q_5$] (q5) at (\ix*1,\yCoordA*\iy);
    }
    \def\xPoints {
      \coordinate [label=135: $s_1$] (x1) at ($ (q1)!.5!(q0) $);
      \coordinate [label=215: $s_2$] (x2) at ($ (q2)!.5!(q0) $);
      \coordinate [label=-90: $s_3$] (x3) at ($ (q3)!.5!(q0) $);
      \coordinate [label=0: $s_4$] (x4) at ($ (q4)!.5!(q0) $);
      \coordinate [label=above: $s_5$] (x5) at ($ (q5)!.5!(q0) $);
    }
    \def\showPoints{
      \foreach \i in {0,...,\Nvertex}
      \fill [black] (q\i) circle (3pt);
      \foreach \i in {1,...,\Nvertex} {
        \draw [fill, lightgray] (x\i) circle (2.25pt);
        \draw (x\i) circle (2.25pt);
      }
    }
    \def\showLines{
      \draw (q1)--(q2) -- (q3) -- (q4) -- (q5) -- cycle;
      \foreach \i in {1,...,\Nvertex} {
        \draw (q\i) -- (q0);
      }
    }
    \def\showTriangles{
      \node at ($ (q5)!.5!(q1) $) [label=+45:${T_1}$] {}; 
      \node at ($ (q1)!.5!(q2) $) [label=above:${T_2}$] {}; 
      \node at ($ (q2)!.5!(q3) $) [label=135:${T_3}$] {}; 
      \node at ($ (q3)!.5!(q4) $) [label=below:${T_4}$] {}; 
      \node at ($ (q4)!.5!(q5) $) [label=below:${T_5}$] {}; 
    }
    \vertices
    \xPoints
    \showPoints
    \showLines
    \showTriangles
  \end{tikzpicture}
}

\begin{figure}
  \centering 
  {\macroelemA{-0.3}{-0.5}}
  \caption{\FEthreeSpacesP{2}{1}{1} macro-element with a unique interior
    vertex, $q_0$, and $\Nvertex=5$.}
  \label{fig:macroelement:1}
\end{figure}

\begin{proof} We split the proof into four steps.

  ~\step{Algebraic characterization of $N_\Macro$.}

  Let $T_i$, $i=1,...,\Nvertex$, be the elements of $\Macro$. If $\ph\in N_\Macro$, let us
  define the form functions $ \ph|_{T_i} = a_i + b_i x + c_i y$.
  As in Figure~\ref{fig:macroelement:1}, let us denote by $q_0$ the only
  vertex interior to $\Macro$, $q_i$, $i=1,...\Nvertex$, the vertices in $\partial
  \Macro$ and $s_i$, $i=1,...,\Nvertex$ the midpoints of the edges in the interior
  of $\Macro$.

  Choosing $\vh=0$ and $\uh\in
  \UMacro$ defined as $\uh = 1$ on a midpoint $s_i$ and $\uh=0$ on all other
  degrees of freedom of $\Macro$ ($s_j$, $j =1,...,\Nvertex$, $j\neq i$ and $q_k$,
  $k=0,...,\Nvertex$), then
  \begin{align*}
    0 = -\int_\Macro \div(\uh,\vh)\;\ph &= -\int_\Macro \dx \uh \; \ph = 
    \int_\Macro \uh \; \dx \ph =
    \\
    &= b_i \int_{T_i} \uh + b_{i+1} \int_{T_{i+1}} \uh 
    = \frac{|T_j|}{3} b_j + \frac{|T_{j+1}|}{3} b_{j+1},
  \end{align*}
  where we have applied the quadrature formula $ \int_T f =
  \frac{|T|}{3} \sum_{i=1}^3 f(s_i^T), $ which is exact on \P2 if $s_i^T$
  are the midpoints of the edges of $T$.  Again, we have identified
  the index $i=\Nvertex+1$ with $i=1$. Defining $\alpha_{i}=|T_{i}|$, the
  following linear system for $b_i$ is obtained:
  \begin{equation}
    \begin{pmatrix}
      \alpha_1 & \alpha_2 & 0 &  \dots & 0 & 0\\
      0 & \alpha_2 & \alpha_3 &  \dots & 0 & 0\\
      \vdots& \vdots & \vdots & \ddots  & \vdots & \vdots \\
      0 & 0 & 0 & \dots & \alpha_{\Nvertex-1} & \alpha_{\Nvertex} \\
      \alpha_1 & 0 & 0 & \dots & 0 & \alpha_{\Nvertex} \\
    \end{pmatrix}.
    \begin{pmatrix}
      b_1 \\ b_2 \\ \vdots  \\ b_{\Nvertex-1} \\ b_{\Nvertex}
    \end{pmatrix}
    =
    \begin{pmatrix}
      0 \\ 0 \\ \vdots \\ 0 \\ 0
    \end{pmatrix}
    \label{eq:P2P2P1.linear.system}
  \end{equation}
  A simple calculation shows that, if $b_1=-b/\alpha_1$ then $b_2=b/\alpha_2$, $b_3=-b/\alpha_3$, $\dots$, $b_{\Nvertex}=(-1)^{\Nvertex} b/\alpha_{\Nvertex}$ and $b_1=(-1)^{\Nvertex+1}b/\alpha_1$.
  Therefore,  the solution
  of~(\ref{eq:P2P2P1.linear.system}) can be characterized as follows:
  \begin{equation}
    \label{eq:b_i:oddeven}
    \left\{
      \begin{aligned}
        &\text{If $\Nvertex$ is \textit{odd}}, \quad b_i=0,
        \quad \forall i=1,...,\Nvertex. \\
        &\text{If $\Nvertex$ is \textit{even}}, \quad
        b_i=\frac{(-1)^{i}}{\alpha_i} b, \quad \forall i=1,...,\Nvertex,
        \quad \forall  b\in\Rset.
      \end{aligned}
    \right.
  \end{equation}

  Taking 
  $\uh=1$ in $q_0$, $\uh=0$ on all other degrees of freedom in $\Macro$ and
  $\vh=0$ does not provide any new information, because the midpoint
  quadrature formula means that $\int_{T_i} \uh =0$ for all $i$.

  Finally let us choose $\uh=0$ and $\vh\in \VMacro$ defined as $\vh = 1$ on the
  interior vertex, $q_0$, and of course $\vh=0$ on all vertices on
  $\partial \Macro$. Then $0  = (\vh,\dy \ph) = \sum_{i=1}^{\Nvertex} c_i \int_{T_i} \vh = \sum_{i=1}^{\Nvertex} c_i
  |T_i|/{3}$ hence
  \begin{equation}\label{eq:sumAlphaiCi}
    \sum_{i=1}^{\Nvertex}\alpha_i c_i = 0.
  \end{equation}

  Now assuming, without loss of generality, $q_0=(0,0)$ and imposing
  the continuity of $\ph$ on $q_0$, it is straightforward that
  $$a_1=...=a_{\Nvertex}=:a.$$
   If $q_i=(q_i^x,q_i^y)\in\partial \Macro$, $i=1,...,\Nvertex$, being $q_0$ and $q_i$ the   common vertices to $T_{i}$ and $T_{i+1}$
  (Figure~\ref{fig:macroelement:1}), continuity of $\ph$ in $q_i$ means:
  \begin{equation}
    \label{eq:linear.equations.c_i:b_i}
    q_i^y (c_{i+1} -  c_i) = q_i^x (b_{i+1} - b_i),
    \quad \forall\, i=1,...,\Nvertex.
  \end{equation}

  \step{Proof of case 2.}

  If there exists a unique vertex on $\partial \Macro$, without loss
  of generality it can be named $q_1$, such that $q_1^y=0$ (that is,
  $q_1$ is horizontally aligned with $q_0$), the
  equation for $i=1$ in~(\ref{eq:linear.equations.c_i:b_i}) means $b_{2}-b_1=0$ (note that $q_1^x\not=0$).
  According to~(\ref{eq:b_i:oddeven}), this does not give any new
  information in the case of $\Nvertex$ odd (in which $b_i=0$ for all $i$)
  but in the even case, we obtain
  $$
  \left(\frac{(-1)^{2}}{\alpha_{2}}  - \frac{(-1)^{1}}{\alpha_{1}}\right)b=0,
  $$
  that is $b=0$ and then $b_i=0$ for $i=1,...,\Nvertex$.  Now from
  (\ref{eq:linear.equations.c_i:b_i}) there exists $c\in \Rset$ such
  that $c_2=c_3= ... = c_{\Nvertex}=c_1=c$. Then, from (\ref{eq:sumAlphaiCi}) $c=0$ (and   $c_i=0$ for all $i$), therefore $\ph=a$ and the macro-element
  condition~(\ref{eq:macroelement.condition}) holds.

  \step{Proof of case 3.}

  Now let us suppose that there is not any vertex horizontally
  aligned with $q_0=(0,0)$, that is $q_i^y \neq 0$ for all
  $i=1,...,\Nvertex$. According to~(\ref{eq:b_i:oddeven}), the
  linear equations~(\ref{eq:linear.equations.c_i:b_i}) can be written as
  \begin{align*}
    q_i^y (c_{i+1} -  c_i) &=
    q_i^x \left(\frac{(-1)^{i+1}}{\alpha_{i+1}}-\frac{(-1)^{i}}{\alpha_{i}}\right)b
    \\
    &=
    q_i^x (-1)^{i+1}\left(\frac{1}{\alpha_{i+1}}+\frac{1}{\alpha_i}\right)
    b, \quad \forall\,i=1,...,\Nvertex,
  \end{align*}
  with $b=0$ if $\Nvertex$ is odd. We can divide by $q_i^y$, obtaining
  \begin{equation}
    \label{eq:12}
    (c_{i+1}-c_i) + (-1)^{i} \widehat m_i \,\widehat \alpha_i \, b = 0,
     \quad \forall\,i=1,...,\Nvertex,
  \end{equation}
  where $\widehat m_i= q_i^x/q_i^y$ is the inverse of the slope of
  the segment $\overline{q_0 q_i}$, hence $\widehat
  m_i=\cos(\sigma_i)/\sin(\sigma_i)=\cot(\sigma_i)$, and
  $\widehat\alpha_i = 1/\alpha_{i}+1/\alpha_{i+1}=1/|T_i| +
  1/|T_{i+1}|$. Therefore,
  $$
  \widehat m_i \,\widehat \alpha_i=\cot(\sigma_i)\left(\frac1{|T_i|}+ \frac1{|T_{i+1}|} \right).
  $$

  If $\Nvertex$ is \textit{odd} (case \textit{3.a}), $b=0$ and (\ref{eq:sumAlphaiCi}) and 
  (\ref{eq:12}) imply $c_i=0$ for all $i=1,...,\Nvertex$, concluding that
  $p_h=a$ and~(\ref{eq:macroelement.condition}) holds.
  
  If $\Nvertex$ is \textit{even} (case \textit{3.b}), let us
  write~(\ref{eq:12}) and (\ref{eq:sumAlphaiCi}) as the algebraic system:
  \begin{equation}
    \begin{pmatrix}
      -1 & 1 & 0 &  \dots & 0 & 0 &  -\widehat m_1 \widehat\alpha_1\\
      0 & -1 & 1 &  \dots & 0 & 0 &  \widehat m_2 \widehat\alpha_2\\
      \vdots& \vdots & \vdots & \ddots  & \vdots & \vdots &\vdots \\
      0 & 0 & 0 & \dots & -1 & 1 &  - \widehat m_{\Nvertex-1} \widehat\alpha_{\Nvertex-1} \\
      1 & 0 & 0 & \dots & 0 & -1 &  \widehat m_{\Nvertex}
      \widehat\alpha_{\Nvertex}\\
      \alpha_1 & \alpha_2 & \alpha_3 & \dots & \alpha_{\Nvertex-1} & \alpha_{\Nvertex} & 0
    \end{pmatrix}.
    \begin{pmatrix}
      c_1 \\ c_2 \\ \vdots  \\c_{\Nvertex-1} \\ c_{\Nvertex} \\ b
    \end{pmatrix}
    =
    \begin{pmatrix}
      0 \\ 0 \\ \vdots \\ 0 \\ 0 \\ 0
    \end{pmatrix}.
    \label{eq:P2P2P1.linear.system.full}
  \end{equation}
  Adding the first $\Nvertex$ rows of this system, we obtain the equation $ S \* b =
  0 $, where
  $$S=\sum_{i=1}^{\Nvertex} (-1)^{i}\widehat m_i \widehat\alpha_i
  = \sum_{i=1}^{\Nvertex} (-1)^{i} \cot(\sigma_i)\left(\frac1{|T_i|}+ \frac1{|T_{i+1}|} \right).
  $$
  
 If we assume $S\neq 0$, then $b=0$ and the first $\Nvertex$ equations
  of~(\ref{eq:P2P2P1.linear.system.full}) can be written as
  $$
  -c_1+c_2=0,\quad -c_2+c_3=0,\ ...,\quad -c_{\Nvertex}+c_1=0,
  $$ 
  hence $c_1=c_2=\cdots=c_{\Nvertex}\equiv c \in\Rset$. But then, last equation
  of~(\ref{eq:P2P2P1.linear.system.full}) means c $\sum_{i=1}^{\Nvertex}
  \alpha_i =0$, which implies $c=0$.  In short, if $S\neq 0$ the only
  solution of the system is $c_1=\cdots=c_{\Nvertex}=b=0$ and therefore $p_h
  = a$.

  Reciprocally, if $S=0$ the first $\Nvertex$ rows of $A$ are linearly
  dependent and the system~(\ref{eq:P2P2P1.linear.system.full}) has got
  non trivial solutions $(c_1,..., c_{\Nvertex}, b)\neq 0$. For those values, any
  pressure defined by 
  $$
  \ph|_{T_i}=a+\frac{(-1)^{i}}{\alpha_i} b\,x+c_i \,y
  $$
  satisfies (\ref{eq:P2P2P1.linear.system})--(\ref{eq:linear.equations.c_i:b_i}), therefore it
  is a non-constant element of $N_\Macro$
  and~(\ref{eq:macroelement.condition}) does not hold.

  \step{Proof of case 1.}

  Finally, we show that if there exist  two vertices
  $q_j$ and $q_k$ on $\partial \Macro$ horizontally aligned with $q_0$ ($1\le j<k \le \Nvertex$) then
  $\Macro$ is not regular. Without loss of generality, renumbering the
  vertices if necessary, we can suppose $1<j<k=\Nvertex$. Then equations $j$ and
  $\Nvertex$ in~(\ref{eq:linear.equations.c_i:b_i}) imply respectively
  $b_{j+1}=b_j$ and $b_1=b_{\Nvertex}$, which (using~(\ref{eq:b_i:oddeven}))
  yields $b=0$ (in both cases) and then $b_i=0$ for all $i=1,...,\Nvertex$.

Taking this into account in the rest of equations of (\ref{eq:linear.equations.c_i:b_i}), we have
  \begin{align*}
    c_2-c_1 =0,\ ..., c_{j}-c_{j-1}=0 \quad &\text{and} 
    \quad c_{j+2}-c_{j+1}=0,\ ...,\ c_{\Nvertex} - c_{\Nvertex-1}=0
    \\
    \intertext{and then, there exists $c$ and $\widetilde c\in\Rset$, such that}
    c_1=c_2=\cdots=c_j\equiv c \quad &\text{and} \quad
    c_{j+1}=c_{j+2}=\cdots=c_{\Nvertex}\equiv\widetilde c .
  \end{align*}
  This fact and condition~(\ref{eq:sumAlphaiCi}) imply
  \begin{equation}
    \label{eq:c and widetilde c}
    \widetilde c = \frac{- \sum_{i=1}^j \alpha_i}{\sum_{i=j+1}^{\Nvertex} \alpha_i}
    \cdot c.
  \end{equation}
  Note that $\sum_{i=1}^j \alpha_i$ and $\sum_{i=j+1}^{\Nvertex} \alpha_i$ are the measures of the two macro-elements splitting $\Macro $.  Therefore, it is clear that any pressure with $\ph|_{T_i}=c$ for $1\le i\le j$ and
  $\ph|_{T_i}=\widetilde c$ for $j+1\le i\le \Nvertex$, with $\widetilde c$ given
  by~(\ref{eq:c and widetilde c}), is a non constant element of $N_\Macro$ and
  then~(\ref{eq:macroelement.condition}) does not hold.
\end{proof}
\begin{remark}
  \label{rk:P2P1P1-counterexample-structured-mesh}
  Like in the case of \FEthreeSpacesP{1,b}11 given in
  Section~\ref{sec:P1bP1-P1-2d}, the ideas of
  Theorem~\ref{theorem:P2P1P1:macroelem} can be applied to provide
  counterexamples showing that \FEthreeSpacesP211 is not stable in
  some families of strong structured meshes. In fact, the same example
  given in Remark~\ref{rk:P1bP1P1-not-stability-in-structured-mesh} is
  valid in the current context.
\end{remark}

\begin{remark}
  \label{rk:macro211-vs-macro-b11}
  \newcommand{\macroelemD}{
    \begin{tikzpicture}
      \def\Nvertex{6}
      \def\ix{2.5}\def\iy{1.5}
      \def\vertices{
        \coordinate [label=-45:$q_0$] (q0) at (0,0);
        \coordinate [label=right: $q_1$] (q1) at (\ix*1,0);
        \coordinate [label=+45: $q_2$] (q2) at (\ix,\iy);
        \coordinate [label=+135: $q_3$] (q3) at  (0,\iy);
        \coordinate [label=+135: $q_4$] (q4) at   (-\ix,0);
        \coordinate [label=225: $q_5$] (q5) at (-\ix,-\iy);
        \coordinate [label=-45: $q_6$] (q6) at (0,-\iy);
      }
      \def\xPoints {
        \coordinate (x1) at ($ (q1)!.5!(q0) $);
        \coordinate (x2) at ($ (q2)!.5!(q0) $);
        \coordinate (x3) at ($ (q3)!.5!(q0) $);
        \coordinate (x4) at ($ (q4)!.5!(q0) $);
        \coordinate (x5) at ($ (q5)!.5!(q0) $);
        \coordinate (x6) at ($ (q6)!.5!(q0) $);
      }
      \def\showPoints{
        \foreach \i in {0,...,\Nvertex}
        \fill [black] (q\i) circle (3pt);
        \foreach \i in {1,...,\Nvertex} {
          \draw [fill, lightgray] (x\i) circle (2.25pt);
          \draw (x\i) circle (2.25pt);
        }
      }
      \def\showLines{
        \draw (q1)--(q2) -- (q3) -- (q4) -- (q5) -- (q6) -- cycle;
        \foreach \i in {1,...,\Nvertex} {
          \draw (q\i) -- (q0);
        }
      }
      \def\showTriangles{
        \node at ($ (q6)!.5!(q1) $) [label=-45:${T_6}$] {}; 
        \node at ($ (q1)!.5!(q2) $) [label=0:${T_1}$] {}; 
        \node at ($ (q2)!.5!(q3) $) [label=90:${T_2}$] {}; 
        \node at ($ (q3)!.5!(q4) $) [label=135:${T_3}$] {}; 
        \node at ($ (q4)!.5!(q5) $) [label=180:${T_4}$] {}; 
        \node at ($ (q5)!.5!(q6) $) [label=-45:${T_5}$] {}; 
      }
      \vertices
      \xPoints
      \showPoints
      \showLines
      \showTriangles
    \end{tikzpicture}
  }
  \newcommand{\macroelemE}{
    \begin{tikzpicture}
      \def\Nvertex{6}
      \def\ix{2.2}\def\iy{0.9}
      \def\vertices{
        \coordinate [label=-45:$q_0$] (q0) at (0,0);
        \coordinate [label=right: $q_1$] (q1) at (\ix,\iy);
        \coordinate [label=+45: $q_2$] (q2) at (0,2*\iy);
        \coordinate [label=+135: $q_3$] (q3) at  (-\ix,\iy);
        \coordinate [label=+135: $q_4$] (q4) at   (-\ix,-\iy);
        \coordinate [label=225: $q_5$] (q5) at (0,-2*\iy);
        \coordinate [label=-45: $q_6$] (q6) at (\ix,-\iy);
      }
      \def\xPoints {
        \coordinate (x1) at ($ (q1)!.5!(q0) $);
        \coordinate (x2) at ($ (q2)!.5!(q0) $);
        \coordinate (x3) at ($ (q3)!.5!(q0) $);
        \coordinate (x4) at ($ (q4)!.5!(q0) $);
        \coordinate (x5) at ($ (q5)!.5!(q0) $);
        \coordinate (x6) at ($ (q6)!.5!(q0) $);
      }
      \def\showPoints{
        \foreach \i in {0,...,\Nvertex}
        \fill [black] (q\i) circle (3pt);
        \foreach \i in {1,...,\Nvertex} {
          \draw [fill, lightgray] (x\i) circle (2.25pt);
          \draw (x\i) circle (2.25pt);
        }
      }
      \def\showLines{
        \draw (q1)--(q2) -- (q3) -- (q4) -- (q5) -- (q6) -- cycle;
        \foreach \i in {1,...,\Nvertex} {
          \draw (q\i) -- (q0);
        }
      }
      \def\showTriangles{
        \node at ($ (q6)!.5!(q1) $) [label=-45:${T_1}$] {}; 
        \node at ($ (q1)!.5!(q2) $) [label=0:${T_2}$] {}; 
        \node at ($ (q2)!.5!(q3) $) [label=90:${T_3}$] {}; 
        \node at ($ (q3)!.5!(q4) $) [label=135:${T_4}$] {}; 
        \node at ($ (q4)!.5!(q5) $) [label=180:${T_5}$] {}; 
        \node at ($ (q5)!.5!(q6) $) [label=-45:${T_6}$] {}; 
      }
      \vertices
      \xPoints
      \showPoints
      \showLines
      \showTriangles
    \end{tikzpicture}
  }

  \begin{figure}
    \centering
    \begin{tabular}{cc}
      \subfloat[Two vertices
      horizontally aligned with $q_0$.]{ 
        \label{fig:macro.notStable.A}
        \macroelemD }
      &
      \subfloat[No vertex horizontally aligned, but $n$ is even
      and~(\ref{eq:instabilityMacroCondition}) holds.]{
        \label{fig:macro.notStable.B}
        \macroelemE }
    \end{tabular}
    \caption{Two macro-elements where $\FEthreeSpacesP{2}{1}{1}$ is not regular.}
    \label{fig:macro.notStable}
  \end{figure}  
  \label{rk:not-stability-of-P2P1P1}
  According to Theorem~\ref{theorem:P2P1P1:macroelem}, there are two
  situations where \FEthreeSpacesP{2}{1}{1} may be singular in a
  macro-element with one only interior vertex $q_0$:
  \begin{enumerate}
  \item[\textit{(a)}] either there exist two vertices horizontally aligned with
    $q_0$,
  \item[\textit{(b)}] or $\Nvertex$ is even and the algebraic
    condition~(\ref{eq:instabilityMacroCondition}) holds.
  \end{enumerate}

  The case \textit{(a)}, which is illustrated in
  Figure~\ref{fig:macro.notStable.A}, is coincident to the singularity
  condition studied for \FEthreeSpacesP{1,b}11 in
  Section~\ref{sec:P1bP1P1}, but the case \textit{(b)} is specific of
  \FEthreeSpacesP211 and does not appears in
  the~\FEthreeSpacesP{1,b}11 context. In this sense, the regularity of
  \FEthreeSpacesP211 is more restrictive than the regularity of
  \FEthreeSpacesP{1,b}11.

  The case \textit{(a)} is specially relevant because constitutes the
  basic brick in many vertical structured meshes, like the one in
  Figure~\ref{fig:structured.mesh} (used in the counterexample
  presented in
  Remark~\ref{rk:P2P1P1-counterexample-structured-mesh}). Also this is
  the kind of mesh used in the numerical experiment that produced the
  instability given in Figure~\ref{fig:P2P1P1-p-str}.

  About condition \textit{(b)}, in practice it is difficult to find
  random macro-elements of this kind, i.e.~macro-elements for which
  $\Nvertex$ is even, no vertex is horizontally aligned with $q_0$
  and~(\ref{eq:instabilityMacroCondition}) is satisfied. One of these
  cases (of what can be called ``weakly \yStructured macro-elements'')
  is presented in Figure~\ref{fig:macro.notStable.B}.  Indeed,
  assuming $|T_i|=|T_{i+1}|$ for $i=1,...,6$, then $\cot(\sigma_i)=1$
  for $i=1,4$, $\cot(\sigma_i)=-1$ for $i=3,6$ and $\cot(\sigma_i)=0$
  for $i=2,5$, hence~(\ref{eq:instabilityMacroCondition}) holds.

  But notice that, in the case of the macro-element in
  Figure~\ref{fig:macro.notStable.B}, the continuity of the cotangent
  function implies that slight modifications of the triangles, conduce
  to elements where~(\ref{eq:instabilityMacroCondition}) is not
  satisfied, i.e.~regular macro-elements.  In any case, for discrete
  inf-sup condition~(\ref{eq:stokes-inf-sup}),
  equation~(\ref{eq:instabilityMacroCondition}) must be uniformly
  satisfied, i.e.~(\ref{eq:stokes-inf-sup}) does not hold (with
  constant independent on $h$) if the left hand side in
  equation~(\ref{eq:instabilityMacroCondition}) tends to zero.

  On the other hand, it is easy to find macro-element families
  where \FEthreeSpacesP211 is regular.  For example, in the case
  $\Nvertex=4$, if each vertex $q_i$, $i=1,...,4$ is in a different
  quadrant (Figure~\ref{fig:regularQuadrantVertex.A}) then the
  singularity condition~(\ref{eq:instabilityMacroCondition}) does not
  hold. In effect, if we denote $Q_i$ the $i$-th quadrant and suppose
  $q_i\in Q_i$ for all $i=1,...,4$ then $\cot(\sigma_1)>0$,
  $\cot(\sigma_2)<0$, $\cot(\sigma_3)>0$, $\cot(\sigma_4)<0$ and then
  \begin{equation*}
    \sum_{i=1}^4 (-1)^{i} \cot(\sigma_i)\left( \frac1{|T_i|}+\frac1{|T_{i+1}|}\right) < 0.
  \end{equation*}
  In the case of $\Nvertex=4$ and two vertices in the same quadrant as in
  Figure~\ref{fig:regularQuadrantVertex.B}, the macro-elements are
  regular in most cases. For example, let us consider, for simplicity,
  $|T_i|=1$ for all $i=1,...,4$. Since we are taking
  $\sigma_i<\sigma_{i+1}$ for $i=1,...,3$ and the cotangent function
  is decreasing, if we choose $q_1,q_2\in Q_1$ then
  $-\cot(\sigma_1)+\cot(\sigma_2) := s_{1,2} <0$. Moreover:
  \begin{itemize}
  \item If $q_3\in Q_2$ and $q_4 \in Q_3$, then $\cot(\sigma_3)<0$ and
    $\cot(\sigma_4)>0$. Therefore $-\cot(\sigma_3)+\cot(\sigma_4) :=
    s_{3,4} >0$ and for each angle $\sigma_3\in Q_2$ there is a unique angle 
    $\sigma_4\in Q_4$ such that $s_{1,2}+s_{3,4}=0$. In all other
    cases the singularity
    condition~(\ref{eq:instabilityMacroCondition}) does not hold.
  \item If $q_3\in Q_2$ and $q_4 \in Q_4$, then $\cot(\sigma_3)<0$ and
    $\cot(\sigma_4)<0$. Therefore $s_{1,2}+\cot(\sigma_4) <0$ and
    there is a unique $\sigma_3\in Q_3$ such
    that~(\ref{eq:instabilityMacroCondition}) holds.
  \item If $q_3\in Q_3$ and $q_4 \in Q_4$
    (Figure~\ref{fig:regularQuadrantVertex.B}), then
    $\cot(\sigma_3)>0$ and
    $\cot(\sigma_4)<0$. Therefore~(\ref{eq:instabilityMacroCondition})
    does not hold.
  \end{itemize}
  \newcommand{\macroelemB}{
    \begin{tikzpicture}
      \def\Nvertex{4} \def\ix{3}\def\iy{1.25} \def\vertices{ \coordinate
        [label=below:$q_0$] (q0) at (0,0); \coordinate [label=+45:
        $q_1$] (q1) at (0.75*\ix,0.5*\iy); \coordinate [label=above:
        $q_2$] (q2) at (-\ix*0.5,\iy); \coordinate [label=135: $q_3$]
        (q3) at (-\ix,-\iy); \coordinate [label=-45: $q_4$] (q4) at
        (\ix*0.5,-\iy); } \def\xPoints { \coordinate (x1) at ($
        (q1)!.5!(q0) $); \coordinate (x2) at ($ (q2)!.5!(q0) $);
        \coordinate (x3) at ($ (q3)!.5!(q0) $); \coordinate (x4) at ($
        (q4)!.5!(q0) $); } \def\showPoints{ \foreach \i in {0,...,\Nvertex}
        \fill [black] (q\i) circle (3pt); \foreach \i in {1,...,\Nvertex} {
          \draw [fill, lightgray] (x\i) circle (2.25pt); \draw (x\i)
          circle (2.25pt); } } \def\showLines{
        \draw (q1)--(q2) -- (q3) -- (q4) -- cycle;
        \foreach \i in {1,...,\Nvertex} { \draw (q\i) -- (q0); } }
      \def\showTriangles{ \node at ($ (q4)!.5!(q1) $)
        [label=-45:${T_1}$] {}; \node at ($ (q1)!.5!(q2) $)
        [label=above:${T_2}$] {}; \node at ($ (q2)!.5!(q3) $)
        [label=135:${T_3}$] {}; \node at ($ (q3)!.5!(q4) $)
        [label=below:${T_4}$] {}; } \def\showAxis{ \draw[->,
        color=lightgray] (-1.1*\ix, 0)--(1.1*\ix,0); \draw[->,
        color=lightgray] (0, -1.5*\iy)--(0,1.5*\iy); } \showAxis
      \vertices \xPoints \showPoints \showLines \showTriangles
    \end{tikzpicture}
  } \newcommand{\macroelemC}{
    \begin{tikzpicture}
      \def\Nvertex{4} \def\ix{2.25}\def\iy{1.8} \def\vertices{ \coordinate
        [label=-90:$q_0$] (q0) at (0,0); \coordinate [label=+45:$q_1$]
        (q1) at (0.97*\ix,0.22*\iy); \coordinate [label=above:$q_2$]
        (q2) at (0.22*\ix,0.97*\iy);
        \coordinate [label=215: $q_3$] (q3) at (-0.94*\ix,-0.25*\iy);
        \coordinate [label=-45: $q_4$] (q4) at (\ix*0.71,-0.71*\iy); }
      \def\xPoints { \coordinate (x1) at ($ (q1)!.5!(q0) $);
        \coordinate (x2) at ($ (q2)!.5!(q0) $); \coordinate (x3) at ($
        (q3)!.5!(q0) $); \coordinate (x4) at ($ (q4)!.5!(q0) $); }
      \def\showPoints{ \foreach \i in {0,...,\Nvertex} \fill [black] (q\i)
        circle (3pt); \foreach \i in {1,...,\Nvertex} { \draw [fill,
          lightgray] (x\i) circle (2.25pt); \draw (x\i) circle
          (2.25pt); } } \def\showLines{
        \draw (q1)--(q2) -- (q3) -- (q4) -- cycle;
        \foreach \i in {1,...,\Nvertex} { \draw (q\i) -- (q0); } }
      \def\showTriangles{ \node at ($ (q4)!.5!(q1) $)
        [label=-45:${T_1}$] {}; \node at ($ (q1)!.5!(q2) $)
        [label=above:${T_2}$] {}; \node at ($ (q2)!.5!(q3) $)
        [label=135:${T_3}$] {}; \node at ($ (q3)!.5!(q4) $)
        [label=below:${T_4}$] {}; } \def\showAxis{ \draw[->,
        color=lightgray] (-1.2*\ix, 0)--(1.3*\ix,0); \draw[->,
        color=lightgray] (0, -1.15*\iy)--(0,1.15*\iy); } \showAxis
      \vertices \xPoints \showPoints \showLines \showTriangles
    \end{tikzpicture}
  }
  \begin{figure}
    \centering
    \begin{tabular}{cc}
      \subfloat[Each boundary vertex is in a different
      quadrant.]{ 
        \label{fig:regularQuadrantVertex.A}
        \macroelemB } 
      &
      \subfloat[Equal size elements with $q_1$, $q_2$ in first quadrant,
      $q_3$ in third and $q_4$ in fourth one.]{ 
        \label{fig:regularQuadrantVertex.B}
        \macroelemC }
    \end{tabular}
    \caption{Two $\FEthreeSpacesP{2}{1}{1}$ Stokes-regular macro-elements
      with $n=4$}
    \label{fig:regularQuadrantVertex}
  \end{figure}
\end{remark}
\begin{remark}
  Arguing as in Theorem~\ref{theorem:P1bP1P1-mesh-stability}, the
  local result of Theorem~\ref{theorem:P2P1P1:macroelem} can be
  extended to a global result regarding inf-sup
  condition~(\ref{eq:stokes-inf-sup}) in uniformly unstructured
  meshes.  Anyway, as commented in
  Remark~\ref{rk:macro211-vs-macro-b11}, this case is more complicated
  because it depends also on validating uniformly
  condition~(\ref{eq:instabilityMacroCondition}). Hence, in principle,
  the number of meshes where \FEthreeSpacesP211 is stable is slightly
  less than in the \FEthreeSpacesP{1,b}11 case.  For ensuring
    stability, a generalization of Algorithm~\ref{alg:x-unstruct} may
    be applied. In this case, the algorithm shall be more complex due
    to the additional condition~(\ref{eq:instabilityMacroCondition}).
    But in any case, in all our numerical experiments in random
    (unstructured) meshes, results have been satisfactory without any
    kind of mesh post-processing. See next section for details.
\end{remark}

\subsection{Numerical simulations}

\begin{test}[\FEthreeSpacesP211 and cavity test]
  \label{test:211-cavity-test}
  \begin{figure}
    \centering
    \begin{tabular}{ccc}
      \subfloat[$\ph$ in unstructured mesh.]{
        \label{fig:P2P1P1-p-uns}
        \pgfimage[width=0.31\linewidth]{\imgdir/21-1-p-uns}}
      & &
      \subfloat[$\ph$ in a structured mesh.]{
        \label{fig:P2P1P1-p-str}
        \pgfimage[width=0.31\linewidth]{\imgdir/21-1-p-str}}
    \end{tabular}
    \caption{\FEthreeSpacesP{2}{1}{1} pressure in unstructured and
      structured meshes.}
    \label{fig:P2P1P1-cavity-test}
  \end{figure}  
  Firstly, the cavity test which is described in
  Test~\ref{test:b11-cavity-test} in Section~\ref{sec:P1bP1P1} has
  been repeated for \FEthreeSpacesP211 elements, using the same
  unstructured and structured meshes
  (cf.~Figures~\ref{fig:P1bP1P1-unstruct-mesh}
  and~\ref{fig:P1bP1P1-struct-mesh}). The results obtained, which can
  be seen in Figure~\ref{fig:P2P1P1-cavity-test}, show correct results
  for the pressure in unstructured meshes
  (cf.~Figure~\ref{fig:P2P1P1-p-uns}), but spurious pressure modes appear in
  structured ones (cf.~Figure~\ref{fig:P2P1P1-p-str}), as expected from the
  theoretical results given in Section~\ref{sec:P2P1-P1-2d} (see
  Remark~\ref{rk:not-stability-of-P2P1P1}).
\end{test}

\begin{test}[Non-uniformly unstructured mesh]
  \label{tst:P211-non-unif-unstruct-meshes}
  In order to present a numerical evidence of the necessity of uniform
  unstructured meshes for stability, we have developed a
  numerical experiment where the behaviour of \FEthreeSpacesP211
  elements is tested in a non uniformly unstructured mesh family which
  ``converge to a structured mesh'' when $h\to 0$.
  
  Specifically we considered five meshes, ${\cal T}_{h_i}$, $i=1, ...,
  5$, where the finer mesh, ${\cal T}_{h_5}$, is defined as in
  Figure~\ref{fig:xStr-yUnstr-mesh}, with $63$ intervals on the top
  boundary (thus $h_5\simeq 1/63$). About the other coarser four
  meshes,
  they are defined similarly (with $3$, $7$, $15$ and $31$ intervals
  on the top boundary, respectively) but a random displacement is
  added to the $x$--coordinate of its vertices, breaking their
  structure. The amplitude of this displacement decreases ($0.4$,
  $0.2$, $0.1$, $0.05$, respectively), so that ${\cal T}_{h_4}$ is
  very similar to ${\cal T}_{h_5}$.  This way, we have a sequence of
  meshes, ${\cal T}_{h_1}$,..., ${\cal T}_{h_4}$, which are (non
  uniformly) \yUnstructured and whose ``limit'', ${\cal T}_{h_5}$, is a
  ``weakly \yStructured'' mesh (see
  Remark~\ref{rk:not-stability-of-P2P1P1}).

  Then we have developed a program which computes estimates of the
  constants, $\beta_h$ related to the discrete inf-sup
  condition~(\ref{eq:stokes-inf-sup}) for this mesh family and
  \FEthreeSpacesP211 FE. This program is based on
  ASCoT~\cite{ArnoldRognes2009}, a Python module built on top of the
  FEniCS framework~\cite{FEniCS:LoggMardalEtAl2012a} with the purpose
  of facilitating the estimation of Brezzi inf-sup constants. Results
  (see Table~\ref{tab:non-uniform-unstr-meshes}) suggest that
  $\beta_h\to 0$ depends on $h$, vanishing when mesh structure
  appears, therefore discrete inf-sup condition is not satisfied.
  
  \begin{table}
    \centering
    \small
    \begin{tabular}{l@{\qquad}r@{\qquad}cccccc}
      & 
      & ${\cal T}_{h_1}$ & ${\cal T}_{h_2}$ & ${\cal T}_{h_3}$ & ${\cal T}_{h_4}$ & ${\cal T}_{h_5}$ 
      \\ \otoprule
      $h$ & 
      & $1/3$ & $1/7$ & $1/15$ & $1/31$ & $1/63$ 
      \\ \midrule
      $\beta_h$ &
      & $0.19384$ & $0.096089$ & $0.048363$ & $0.023503$ & $0.0086823$
      \\ \bottomrule
    \end{tabular}
    \caption{Vanishing discrete inf-sup constants on a non-uniformly unstructured
      mesh family}
    \label{tab:non-uniform-unstr-meshes}
  \end{table}
\end{test}

\begin{test}[\FEthreeSpacesP211 and error orders]
  \label{test:21-1-error-orders}

  \begin{figure}
    \centering
    \begin{tabular}{@{}c@{}c@{}}
      \subfloat
       {
        \pgfimage[width=0.49\textwidth]{\imgdir/21-1-errors-loglog}
        \label{fig:21-1-errors-loglog}
      }
      &
      \subfloat
      {
        \pgfimage[width=0.49\textwidth]{\imgdir/22-1-errors-loglog}
        \label{fig:22-1-errors-loglog}
      }
    \end{tabular}
    \caption{Velocity and pressure errors for \FEthreeSpacesP211 (left) and
      \FEthreeSpacesP221 (right)}
  \end{figure}

  The experiment described in
  Test~\ref{test:b1-1-error-orders} has been reproduced for \FEthreeSpacesP211.
  Figure~\ref{fig:21-1-errors-loglog} shows the absolute
  errors and orders obtained. The following conclusions can
  be summarized:
  \begin{enumerate}
  \item For both components of the velocity, order $O(h^2)$ in
    $L^2(\Omega)$ and order $O(h)$ in $H^1(\Omega)$ are obtained,
    while order $O(h)$ for pressure is suggested.
  \item Optimal order $O(h^3)$ in $L^2(\Omega)$ and $O(h^2)$ in
    $H^1(\Omega)$ is not reached for $u$ (although it is approximated
    in \P2), due to the effect of approximating $v$ in \P1.
    Therefore, a loss of accurate is produced passing from the
    classical Taylor-Hood element \FEthreeSpacesP221
    (Figure~\ref{fig:22-1-errors-loglog}) to \FEthreeSpacesP211
    (Figure~\ref{fig:21-1-errors-loglog}). In spite of it, this latter
    element \FEthreeSpacesP211 is more stable than \FEthreeSpacesP221
    in degenerated anisotropic problems, as is shown
    in~\CiteSecondPaper.
  \end{enumerate}
  \begin{table}
    \centering
    \small
    \begin{tabular}{l@{\qquad}r@{\qquad}cccccc}
      \\ \toprule
      & $h$
      & $2^{-3}$ & $2^{-4}$ & $2^{-5}$ & $2^{-6}$ & $2^{-7}$  & $2^{-8}$
      \\ \otoprule
      \multirow{2}*{$\uu$} 
      &  $\|u-u_h\|_{L^2}$ &  
      2.185  & 2.291  & 2.008  & 2.029  & 2.031  & 2.066
      \\ \cmidrule{2-8}
      & $\|u-u_h\|_{H^1_0}$ & 
      1.758  & 1.774  & 0.986  & 1.212  & 1.067  & 1.237
      \\ \midrule
      \multirow{2}*{$\vv$} 
      & $\|v-v_h\|_{L^2}$ & 
      2.056  & 2.116  & 2.028  & 1.980  & 2.068  & 2.040
      \\ \cmidrule{2-8}
      & $\|v-v_h\|_{H^1_0}$ & 
      0.977  & 1.044  & 1.017  & 0.990  & 1.022  & 1.019
      \\ \midrule
      {$\pp$}
      & $\|p-p_h\|_{L^2}$ &  
      1.070  & 2.101  & 0.798  & 1.419  & 0.599  & 0.820
      \\ \bottomrule
    \end{tabular}
    \caption{Error orders for velocities and pressure
      (\FEthreeSpacesP211 FE)}
    \label{tab:21-1-orders}
  \end{table}

\end{test}

\begin{test}[$3D$ domain, \FEfourSpacesP2211 or \FEfourSpacesP2111 and cavity tests]
  Finally, we have repeated the $3D$ experiment which was described in
  Test~\ref{test:b11-cavity-test}, using both structured and
  unstructured meshes, but replacing the bubble by \P2.

  Owing to the choice of a color map that amplifies the
  variations around small values of pressure, spurious oscillations of
  the $\ph$ are evident in the structured mesh for \FEfourSpacesP2211
  and \FEfourSpacesP2111 
  (Figures~\ref{fig:P2P1P1-strucured-cavity-test-3d-b}
  and~\ref{fig:P2P1P1-strucured-cavity-test-3d-c}).
  \begin{figure}
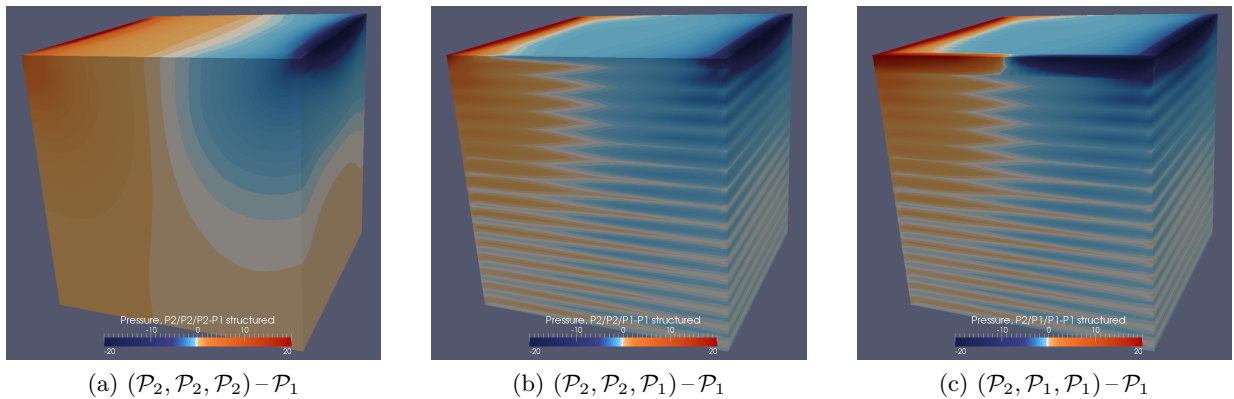

    \centering
    \begin{tabular}{ccc}
      \subfloat[\FEfourSpacesP2221]{ 
        \pgfimage[width=0.3\linewidth]{\imgdir/222-1-str}
        \label{fig:P2P1P1-strucured-cavity-test-3d-a}}
      &
      \subfloat[\FEfourSpacesP22{1}1]{ 
        \pgfimage[width=0.3\linewidth]{\imgdir/221-1-str}
        \label{fig:P2P1P1-strucured-cavity-test-3d-b}}
      &
      \subfloat[\FEfourSpacesP2{1}{1}1]{ 
        \pgfimage[width=0.3\linewidth]{\imgdir/211-1-str}
        \label{fig:P2P1P1-strucured-cavity-test-3d-c}}
    \end{tabular}
    \caption{Comparing the pressure  in
      a $3D$ \emph{structured mesh}.}
    \label{fig:P2P1P1-strucured-cavity-test-3d}
  \end{figure}  

  \begin{figure}
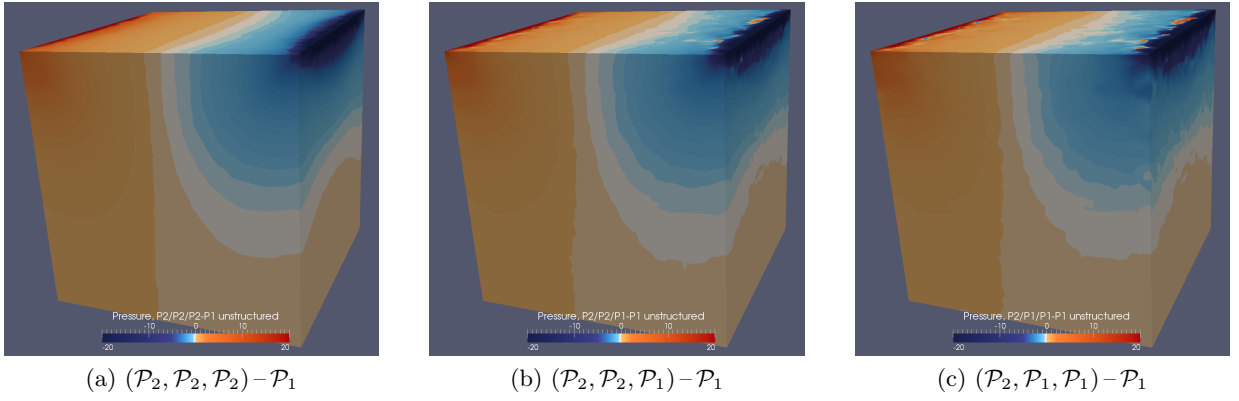

    \centering
    \begin{tabular}{ccc}
      \subfloat[\FEfourSpacesP2221]{ 
        \pgfimage[width=0.3\linewidth]{\imgdir/222-1-uns}
        \label{fig:P2P1P1-unstructured-cavity-test-3d-a}}
      &
      \subfloat[\FEfourSpacesP22{1}1]{ 
        \pgfimage[width=0.3\linewidth]{\imgdir/221-1-uns}
        \label{fig:P2P1P1-unstructured-cavity-test-3d-b}}
      &
      \subfloat[\FEfourSpacesP2{1}{1}1]{ 
        \pgfimage[width=0.3\linewidth]{\imgdir/211-1-uns}
        \label{fig:P2P1P1-unstructured-cavity-test-3d-c}}
    \end{tabular}
    \caption{Comparing the pressure in
      a  $3D$ \emph{unstructured mesh}.}
    \label{fig:P2P1P1-unstructured-cavity-test-3d}
  \end{figure}

\end{test}

\section{Instability of \FEthreeSpaces{\Q2}{\Q1}{\Q1} in structured meshes}
\label{sec:Q2Q1Q1}

In this section we present a counterexample showing that the
combination \FEthreeSpaces{\Q2}{\Q1}{\Q1} is not stable in structured quadrangular 
macro-elements. As above, this result can be extended to global quadrangular meshes.

Let us consider a $2D$ quadrangular  structured mesh $\Th$. Let $M$ be a  macro-element as in
Figure~\ref{fig:Q2Q1Q1-macroelement}, i.e.~$M$ is union of four
rectangular elements defined by $9$ vertices, which are
denoted as $q_{ij}$ ($i,j=1, ..., 3$) and constitute the \Q1 degrees
of freedom. The rest of the \Q2 degrees of freedom are located at the
midpoints of the edges and at the center of the quadrangles.  Without
loss of generality it can be assumed that the central vertex is
located at the origin of coordinates; i.e.~$q_{22}=(0,0)$ using the notation of
Figure~\ref{fig:Q2Q1Q1-macroelement}.

Let us define the following FE spaces in $M$:
\begin{align*}
  \UM &= \{\uh \in H_0^1(M)\cap C^0(\overline M) \st \uh|_K \in \Q2,
  \ \forall K\in M
  \}, 
  \\ 
  \VM & = \{ \vh\in H_0^1(M) \cap C^0(\overline M) \st
  \vv|_K\in \Q1, \ \forall K\in M \}
  \\ 
  \PM &= \{ \ph\in  C^0(\overline M) \st \pp|_K\in \Q1,
  \ \forall K\in M \}.
\end{align*}

\begin{figure}
  \centering
  \begin{tikzpicture}[>=latex]
    \def\ix{40}\def\iy{20}
    \def\vertices{
      \foreach \i in {1,...,3} {
        \foreach \j in {1,...,3} {
          \coordinate (q\i\j) at (\ix*\i,\iy*\j);
        }
      }
    }
    \def\showLines{
      \draw (q11) -- (q21) -- (q22) -- (q12) -- cycle;
      \draw (q21) -- (q31) -- (q32) -- (q22) -- cycle;
      \draw (q12) -- (q22) -- (q23) -- (q13) -- cycle;
      \draw (q22) -- (q32) -- (q33) -- (q23) -- cycle;
    }
    \def\showNodes{
      \draw[black, fill] ($ (q11)!.5!(q22) $) circle(1pt);
      \draw[black, fill] ($ (q22)!.5!(q33) $) circle(1pt);
      \draw[black, fill] ($ (q13)!.5!(q22) $) circle(1pt);
      \draw[black, fill] ($ (q31)!.5!(q22) $) circle(1pt);
      \foreach \i in {1,...,3} {
        \draw[black] ($ (q\i1)!.5!(q\i2) $) circle(1.5pt);
        \draw[black] ($ (q\i2)!.5!(q\i3) $) circle(1.5pt);
        \draw[black] ($ (q1\i)!.5!(q2\i) $) circle(1.5pt);
        \draw[black] ($ (q2\i)!.5!(q3\i) $) circle(1.5pt);
        \draw[black] ($ (q2\i)!.5!(q3\i) $) circle(1.5pt);
        \foreach \j in {1,...,3} {
          \draw [black, fill] (q\i\j) circle (2.5pt);
          \node (texto) at (q\i\j) [label=225:{\small $q_{\i\j}$}] {};
        }
      }      
    }
    \showLines
    \showNodes
  \end{tikzpicture}  
  \caption{\FEthreeSpaces{\Q2}{\Q1}{\Q1} macroelement}
  \label{fig:Q2Q1Q1-macroelement}
\end{figure}

Let us consider the $\Q1$ function $\ph$ defined as 
  \begin{equation}
    \ph(x,y)=
    \begin{cases}
      a+ cy & \text{if } y>0, \\
      a-cy & \text{if } y\le 0,
    \end{cases}
    \label{eq:pressure-counterexample}    
  \end{equation}
  where $a,c\in \Rset$, $c\neq 0$. 
Then each $\vh\in\VM$ can be written as $\alpha\, \phi_{22}$ for any
$\alpha\in\Rset$, where $\phi_{22}$ is the $\Q1$ basis function that
is equal to $1$ in $q_{22}$ and zero in all other vertices of $M$.
Hence, if we apply the 2D trapezoidal rule (which is exact in $\Q1$) and taking into account that $\partial _x p_h =0$,
\begin{align*}
  \int_M (\dx\uh+\dy\vh) \ph 
  &= - \int_{M} \vv_h \; \dy \ph 
  =  -\frac{c}{|M^+|} \int_{M^+} \alpha\phi_{22} + \frac{c}{|M^-|} \int_{M^-} \alpha\phi_{22} 
  \\  &=
  \frac{c}{|M^+|} \sum_{K\subset {M^+}} \frac{|K|}{4} \alpha
  -  \frac{c}{|M^-|} \sum_{K\subset {M^-}} \frac{|K|}{4} \alpha
  = \frac{c}{4}\alpha - \frac{c}{4} \alpha  = 0.
\end{align*}
Therefore the combination $\FEthreeSpaces{\Q2}{\Q1}{\Q1}$
is not stable in quadrangular macro-elements.

As in Section~\ref{sec:P2P1P1}, it is not difficult to extend the
previous counterexample to demonstrate that there exists a family of not
null pressures $\ph\in\Ph$ satisfying $ (\div(u_h,v_h),p_h)=0$ for all
$\uh\in\Uh$ and  $\vh\in\Vh$. Hence, in rectangular structured
meshes, the Stokes inf-sup condition~(\ref{eq:stokes-inf-sup}) does
not hold for the $\FEthreeSpaces{\Q2}{\Q1}{\Q1}$ combination.

\begin{test}[\FEthreeSpaces{\Q2}{\Q1}{\Q1} and cavity test]
  Numerical experiments confirm the $\FEthreeSpaces{\Q2}{\Q1}{\Q1}$
  instability, as can be seen in
  Figure~\ref{fig:Q2Q1Q1-instability}. We consider the $2D$ Stokes
  problem~(\ref{eq:Stokes.a})--(\ref{eq:Stokes.b}) in
  $\domain=(0,1)^2$.  Dirichlet boundary conditions for $\uu$ have
  been applied; $\uu=1$ on the top $\{y=1\}$ and $\uu=0$ on the rest
  of $\partial\domain$. About $\vv$, a natural boundary condition
  $\frac{\partial\vv}{\partial \mathbf{n}}=0$ has been chosen on
  $\partial\domain$. Both stable \FEthreeSpaces{\Q2}{\Q2}{\Q1} and
  unstable \FEthreeSpaces{\Q2}{\Q1}{\Q1} combinations of FE have been
  tested in a mesh composed of $15\times 15$ rectangular elements. As
  expected, Figure~\ref{fig:Q2Q1Q1-instability} shows spurious
  pressure oscillations in the latter case.
  This experiment has been programmed in C++
  with~\textit{LibMesh}~\cite{LibMesh:06}, a parallel FE
  library which supports natively the $\Q1$ and $\Q2$ elements.

  \begin{figure}
    \centering
    \begin{tabular}{cc}
      \subfloat[ ]
      {\pgfimage[width=0.4\linewidth]{\imgdir/pressure-Q2Q2Q1}}
      &
      \subfloat[ ]
      {\pgfimage[width=0.4\linewidth]{\imgdir/pressure-Q2Q1Q1}}
    \end{tabular}
    \caption{Pressure for \FEthreeSpaces{\Q2}{\Q2}{\Q1} (left) and
      \FEthreeSpaces{\Q2}{\Q1}{\Q1} (right). The latter, presents
      spurious ocillations.}
    \label{fig:Q2Q1Q1-instability}
  \end{figure}

\end{test}

\section{Conclusions}
In this paper we have developed two new families of FE spaces
approximating the Stokes problem, where continuous and piecewise
linear functions $\P1$ are enriched, only for one component of the
velocity, by either bubble $\P{1,b}$ or quadratic polynomial
$\P2$. These combinations have been denoted, in the $2D$ case, by
\FEthreeSpacesP{1,b}11 and \FEthreeSpacesP{2}11.

In order to compare the use of bubble functions, we have seen that
combination \FEthreeSpacesP{1,b}11 in uniformly unstructured meshes is more
efficient than the well-known mini-element \FEthreeSpacesP{1,b}{1,b}1,
because \FEthreeSpacesP{1,b}11 preserves the first order accurate of
\FEthreeSpacesP{1,b}{1,b}1 while requires a minor number of degrees of
freedom and then a smaller computational effort.

On the other hand, we have also proved that combination
\FEthreeSpacesP{2}11 in most uniformly unstructured meshes is stable but it
does not preserve the second order accuracy of the well-known
Taylor-Hood element \FEthreeSpacesP{2}{2}1, although
\FEthreeSpacesP{2}11 uses a minor number of degrees of freedom.

Moreover, the stability constraints for \FEthreeSpacesP{2}11 are more
restrictive than for \FEthreeSpacesP{1,b}11, because they depend not
only on the structure of the mesh but also depend on the algebraic
condition~(\ref{eq:sumAlphaiCi}). Hence, in principle, the number
of meshes where \FEthreeSpacesP211 is stable is slightly less than in the
\FEthreeSpacesP{1,b}11 case.
  
Finally, the same type of results are also deduced in $3D$ domains for
the case of bubble functions.

\footnotesize 
\bibliography{biblio}

\newcommand{\etalchar}[1]{$^{#1}$}
\begin{thebibliography}{LMW{\etalchar{+}}12}

\bibitem[AG01]{Azerad-Guillen:01}
P.~Az{\'e}rad and F.~Guill{\'e}n.
\newblock Mathematical justification of the hydrostatic approximation in the
  primitive equations of geophysical fluid dynamics.
\newblock {\em Siam J. Math. Ana.}, 33(4):847--859, 2001.

\bibitem[AR09]{ArnoldRognes2009}
Douglas~N. Arnold and Marie~E. Rognes.
\newblock Stability of {L}agrange elements for the mixed {L}aplacian.
\newblock {\em Calcolo}, 2009.

\bibitem[Az{\'e}94]{Azerad:1994}
P.~Az{\'e}rad.
\newblock Analyse et approximation du probl\`eme de {S}tokes dans un bassin peu
  profond.
\newblock {\em C. R. Acad. Sci. Paris S\'er. I Math.}, 318(1):53--58, 1994.

\bibitem[Az{\'e}96]{Azerad:PhD:96}
P.~Az{\'e}rad.
\newblock {\em Analyse des équations de {N}avier-{S}tokes en bassin peu
  profond et de l'équation de transport}.
\newblock PhD thesis, Neuchâtel, 1996.

\bibitem[BBF13]{boffi-brezzi-fortin_mixed_2013}
Daniele Boffi, Franco Brezzi, and Michel Fortin.
\newblock {\em Mixed Finite Element Methods and Applications}, volume~44 of
  {\em Springer Series in Computational Mathematics}.
\newblock Springer Berlin Heidelberg, Berlin, Heidelberg, 2013.

\bibitem[BF91]{Brezzi-Fortin:91}
F.~Brezzi and M.~Fortin.
\newblock {\em Mixed and Hybrid Finite Element Methods}.
\newblock Springer-Verlag, New-York, 1991.

\bibitem[BL92]{Besson-Laydi:92}
O.~Besson and M.R. Laydi.
\newblock Some estimates for the anisotropic {N}avier-{S}tokes equations and
  for the hydrostatic approximation.
\newblock {\em Math. Mod. and Num. Anal}, Vol. 26(7):855--865, 1992.

\bibitem[CB09]{CushmanRoisin-Beckers:09}
B.~{Cushman-Roisin} and J.~M. Beckers.
\newblock {\em Introduction to Geophysical Fluid Dynamics - Physical and
  Numerical Aspects}.
\newblock Academic Press, 2009.

\bibitem[GR86]{girault-raviart:86}
V.~Girault and P.-A. Raviart.
\newblock {\em Finite element methods for {Navier-Stokes} equations}.
\newblock Springer-Verlag, 1986.

\bibitem[GR09]{Gmsh:09}
C.~Geuzaine and J.-F. Remacle.
\newblock Gmsh: {A 3-D} finite element mesh generator with built-in pre- and
  post-processing facilities.
\newblock {\em International Journal for Numerical Methods in Engineering},
  79:1309 -- 1331, 2009.

\bibitem[GR14]{Guillen-RGalvan:13b}
F.~{Guill\'en-Gonz\'alez} and J.R. {Rodr\'{\i}guez\mbox-Galv\'an}.
\newblock Analysis of the hydrostatic stokes problem and finite-element
  approximation in unstructured meshes.
\newblock {\em Numer. Math.}, 2014.
\newblock Accepted, doi 10.1007/s00211-014-0663-8.

\bibitem[KPSC06]{LibMesh:06}
B.S. Kirk, J.W. Peterson, R.H. Stogner, and G.F. Carey.
\newblock libmesh: a c++ library for parallel adaptive mesh
  refinement/coarsening simulations.
\newblock {\em Eng. with Comput.}, 22(3):237--254, December 2006.

\bibitem[LMW{\etalchar{+}}12]{FEniCS:LoggMardalEtAl2012a}
Anders Logg, Kent-Andre Mardal, Garth~N. Wells, et~al.
\newblock {\em Automated Solution of Differential Equations by the Finite
  Element Method}.
\newblock Springer, 2012.

\bibitem[PHLHM]{FreeFem++}
O.~Pironneau, F.~Hecht, A.~Le~Hyaric, and J.~Morice.
\newblock {FreeFEM++, \url{http://www.freefem.org/}}.

\bibitem[QZ07]{QinZhang:07}
J.~Qin and S.~Zhang.
\newblock Stability and approximability of the {P1-P0} element for {Stokes}
  equations.
\newblock {\em International Journal for Numerical Methods in Fluids},
  54:497--515, 2007.

\bibitem[Ste84]{Stenberg:84}
R.~Stenberg.
\newblock Analysis of mixed finite element methods for the {Stokes} problem:
  {A} unified approach.
\newblock {\em Math Comput}, 42(165):9--23, January 1984.

\bibitem[Ste90]{Stenberg:90}
R.~Stenberg.
\newblock A technique for analysing finite elements methods for viscuous
  incompressible flow.
\newblock {\em International Journal for Numerical Methods in Fluids},
  11:835--948, 1990.

\end{thebibliography}


\end{document}